\numberwithin{equation}{section}
\numberwithin{figure}{section}
\theoremstyle{plain}
\newtheorem{thm}{Theorem}[section]
\newtheorem{theorem}[thm]{Theorem}
\newtheorem{obs}[thm]{Observation}
\newtheorem{clm}[thm]{Claim}
\newtheorem{cor}[thm]{Corollary}
\newtheorem{rem}[thm]{Remark}
\newtheorem{lemma}[thm]{Lemma}
\newtheorem{conjecture}[thm]{Conjecture}
\newtheorem{question}[thm]{Question}
\newtheorem{definition}[thm]{Definition}
\newcommand{\dist}{\text{dist}}
\newcommand{\polylog}{\text{polylog}}
\def\itm#1{\rm ({#1})}
\def\itmarab#1{\mbox{\itm{{\it #1\,}\arabic{*}\hspace{.05em}}}}
\DeclarePairedDelimiter{\set}{\{}{\}}
\title{Minimal Ramsey graphs with many vertices of small degree}
\author{Simona Boyadzhiyska
\and
Dennis Clemens
\and
Pranshu Gupta
}
\address{Institut f\"ur Mathematik, Freie Universit\"at Berlin, Berlin, Germany}
\email{s.boyadzhiyska@fu-berlin.de}
\thanks{The first author was supported by the Deutsche Forschungsgemeinschaft (DFG) Graduiertenkolleg ``Facets of Complexity'' (GRK 2434).}
\address{Institut f\"ur Mathematik, Technische Universit\"at Hamburg, Hamburg, Germany}
\email{dennis.clemens@tuhh.de, pranshu.gupta@tuhh.de}
\begin{document}
\maketitle 

\begin{abstract}
Given any graph $H$, a graph $G$ is said to be \emph{$q$-Ramsey} for $H$ if every coloring of the edges of $G$ with $q$ colors yields a monochromatic subgraph isomorphic to $H$. Further, such a graph $G$
is said to be \emph{minimal $q$-Ramsey} for $H$ if additionally
no proper subgraph $G'$ of $G$ is $q$-Ramsey for $H$. In 1976, Burr, Erd\H{o}s, and Lov\'asz initiated the study of the parameter $s_q(H)$, defined as the smallest minimum degree among all minimal $q$-Ramsey graphs for $H$.
In this paper, we consider the problem of determining how many vertices of degree $s_q(H)$ a minimal $q$-Ramsey graph for $H$ can contain. Specifically, we seek to identify graphs for which a minimal $q$-Ramsey graph can contain arbitrarily many such vertices. We call a graph satisfying this property \emph{$s_q$-abundant}. Among other results, we prove that every cycle is $s_q$-abundant for any integer $q\geq 2$. We also discuss the cases when $H$ is a clique or a clique with a pendant edge, extending previous results of Burr et al.\ and Fox et al. To prove our results and construct suitable minimal Ramsey graphs, we develop certain new gadget graphs, called \emph{pattern gadgets},
which generalize and extend earlier constructions that have proven useful in the study of minimal Ramsey graphs. These new gadgets might be of independent interest.
\end{abstract}

\section{Introduction}\label{sec:intro}

A classical result of F.~P.~Ramsey from 1930~\cite{Ram30} states that, for every graph $H$, there exists an integer $n$ such that the following property holds: No matter how the edges of $K_n$ are colored with two colors, there must always exist a \emph{monochromatic} copy of $H$, that is, a subgraph of $K_n$ isomorphic to $H$ in which all edges have the same color. 
In fact, the same is true if, instead of two, we use any arbitrary number of colors. 
Through the last decades, this result has become the starting point of a field of intense studies, giving rise to a branch of combinatorics known as  \emph{Ramsey theory}. For an excellent survey on the more recent developments in the field, see~\cite{conlon2015recent}.

One line of research is concerned with studying properties of (minimal) Ramsey graphs, which is also the focus of this paper.  
Formally, given any graph $H$, a graph $G$ is said to be \emph{$q$-Ramsey} for $H$, denoted by $G \rightarrow_q H$, if in every coloring of the edges of $G$ with $q$ colors, there exists a monochromatic copy of $H$. Ramsey's theorem discussed above then states that, for every graph $H$, we have $K_n \rightarrow_q H$ provided that $n$ is large enough, thus establishing the existence of a $q$-Ramsey graph for $H$ for any choice of $H$ and $q\geq 2$. We denote the set of all such graphs for $H$ by $\mathcal{R}_q(H)$.

In this language, the well-known \emph{$q$-color Ramsey number} of a graph $H$, denoted by $r_q(H)$, can be defined as the minimum possible number of vertices in a graph that is $q$-Ramsey for $H$. Over the years, researchers have worked hard to understand the behavior of Ramsey numbers for various classes of graphs, which in some cases has turned out to be notoriously difficult. Perhaps the most natural example here is the clique $K_t$. While the determination of $r_2(K_3)$ is a simple exercise often given in a first course in combinatorics, already for $t = 5$, the precise value of $r_2(K_t)$ is not known. For general $t$, Erd\H{o}s and Szekeres~\cite{erdos1935combinatorial} and Erd\H{o}s~\cite{erdos1947some} showed that $2^{t/2} \leq r_2(K_t)\leq 2^{2t}$, establishing that the $2$-color Ramsey number of $K_t$ is exponential in $t$ but leaving a large gap between the two bounds in the base of the exponent. Now, more than 70 years later, those remain essentially the best known bounds, with improvements only in the lower order terms.
The current best known lower bound is due to Spencer~\cite{spencer1975ramsey}; a new upper bound was shown very recently by Sah~\cite{sah2020diagonal}, improving on the previous best known bound due to  Conlon~\cite{conlon2009new}.

More generally, it is of interest to understand what makes a graph $q$-Ramsey for some chosen graph $H$, that is, to understand the structural properties of graphs that are $q$-Ramsey for $H$ and, whenever possible, to characterize all such graphs. After considering the number of vertices, it is natural to ask about the behavior of other graph parameters. For example, much work has been done in studying the minimum possible number of edges in a graph that is $q$-Ramsey for $H$, known as the \emph{$q$-color size-Ramsey number} of $H$.

Here, we are interested in questions concerning minimum degrees of graphs that are $q$-Ramsey for a graph $H$. Note that asking about the smallest possible minimum degree of a graph that is $q$-Ramsey for $H$ is not very interesting, as we can immediately see that the answer is zero. This is because any graph containing a $q$-Ramsey graph for $H$ as a subgraph is itself $q$-Ramsey for $H$, and we can of course add an isolated vertex to obtain a graph with minimum degree zero. To avoid such trivialities, we restrict our attention to those graphs that are, in some sense, critically $q$-Ramsey for $H$. This leads to the following natural definition: We say $G$ is \emph{minimal $q$-Ramsey} for $H$ if $G \rightarrow_q H$ and, for any proper subgraph $G' \subsetneq G$, we have $G' \not\rightarrow_q H$, that is, $G$ loses its Ramsey property whenever we delete any vertex or edge of $G$. We denote the set of all minimal $q$-Ramsey graphs for $H$ by $\mathcal{M}_q(H)$.

The 1970s saw the beginning of two prominent directions of research concerning $\mathcal{M}_q(H)$. 
One of the questions, first posed in~\cite{nevsetvril1976partitions} by Ne{\v{s}}et{\v{r}}il and R{\"o}dl, 
was whether for a given graph $H$ the set $\mathcal{M}_q(H)$ is finite or infinite. 
We call a graph $H$ \emph{$q$-Ramsey finite} (respectively \emph{infinite}) if the set $\mathcal{M}_q(H)$ is finite (respectively infinite). In 1978, Ne{\v{s}}et{\v{r}}il and R{\"o}dl~\cite{nevsetvril1978structure} showed that $H$ is 2-Ramsey infinite in the following three cases: if $H$ is not bipartite, if $H$ is 2.5-connected (that is, $H$ is 2-connected and the removal of any pair of adjacent vertices does not disconnect it), and if $H$ is a forest containing a path of length three. In~\cite{burr1978class} the authors showed that any matching is $2$-Ramsey finite. Subsequently, in~\cite{BURR1981227stars}, Burr et al.\ showed that if $H$ is a disjoint union of non-trivial stars
(i.e., all stars have at least two edges), then $H$ is 2-Ramsey finite if and only if $H$ is an odd star; they also showed that the disjoint union of an odd star with any number of isolated edges is 2-Ramsey-finite.  
Finally, the results of R{\"o}dl and Ruci{\'n}ski~\cite{rodl1995threshold} imply that every graph containing a cycle is 2-Ramsey infinite. As a result, we know that a graph $H$ is 2-Ramsey-finite if and only if it is the disjoint union of an odd star and any number of isolated edges. 

Around the same time, Burr, Erd\H{o}s and Lov{\'a}sz~\cite{burr1976graphs} initiated the general study of graph parameters for graphs in
$\mathcal{M}_q(H)$. In their seminal paper,
they considered the chromatic number, the (vertex) connectivity, and the minimum degree of minimal $2$-Ramsey graphs for the clique $K_t$, $t\geq 3$. In particular, they were interested in how small these parameters can be.
 
Surprisingly, while the $2$-Ramsey number of $K_t$ is still not known, Burr et al.~could determine the mentioned values precisely.

Following~\cite{fox2016minimum}, we set $s_q(H) = \min \{ \delta(G) : G \in \mathcal{M}_q (H) \}$, where as usual $\delta(G)$ denotes the minimum degree of $G$. 
One of the results appearing in~\cite{burr1976graphs} establishes that $s_2(K_t)=(t-1)^2$, which is perhaps surprising, given that each graph in $\mathcal{M}_q(K_t)$ has at least exponentially many vertices.

For more colors, Fox et al.~\cite{fox2016minimum} established that $s_q(K_t) \leq 8(t-1)^6q^3$, showing that $s_q(K_t)$ is polynomial in both $t$ and $q$. 
Moreover, they also investigated the growth of $s_q(K_t)$ as a function of $q$ (with $t$ being treated as a constant) and proved that $s_q(K_t) = q^2\polylog (q)$. However, a logarithmic gap remained between the lower and the upper bound. For the case of the triangle, Guo and Warnke~\cite{gw2020triangle} closed this
gap, showing that
$s_q(K_3)=\Theta(q^2\log q)$.
On the other hand, Hàn et al.~\cite{hanrodlszabo2018} studied the dependence of $s_q(K_t)$ on the size of the clique with the number of colors kept constant; they showed that $s_q(K_t) = t^2\polylog (t)$.

The parameter $s_q(H)$ has also been investigated for other choices of the target graph $H$ when $q=2$. For instance, Szab\'o et al.~\cite{szabo2010minimum} determined $s_2(H)$ for many interesting classes of bipartite graphs, including trees, even cycles, and biregular bipartite graphs. Later Grinshpun~\cite{grinshpun2015some} determined $s_2(H)$ for any 3-connected bipartite graph $H$. A rather surprising result in this direction appeared in a paper of Fox et al.~\cite{fox2014ramsey}, who studied $s_2(K_t\cdot K_2)$, where $K_t\cdot K_2$ is the graph obtained from a clique of size $t$ by adding a new vertex and connecting it to exactly one vertex of the clique (we will call such a graph a \emph{clique with a pendant edge}). The authors proved that 
$s_2(K_t\cdot K_2) = t-1$, showing that even a single edge can significantly change the value of the parameter $s_2$. This result also implies that there exists a $2$-Ramsey graph for $K_t$ that is not $2$-Ramsey for $K_t\cdot K_2$.

Once we know that a minimal $q$-Ramsey graph for a given $H$ can contain a vertex of small degree, a natural next question is, \emph{how many} vertices of this small degree can a minimal $q$-Ramsey graph for $H$ contain? More specifically, can a minimal $q$-Ramsey graph have arbitrarily many vertices of the smallest possible minimum degree? This question motivates the following definition.

\begin{definition}\rm
For a given integer $q \geq 2$, a graph $H$ is said to be \emph{$s_q$-abundant} if, for every $k \geq 1$, there exists a minimal $q$-Ramsey graph for $H$ with at least $k$ vertices of degree $s_q(H)$.
\end{definition}

As it turns out, it is not immediate whether $s_q$-abundant graphs exist at all. In this paper, we will give several examples showing that, for all $q\geq 2$, there are infinitely many $s_q$-abundant graphs.

It is not hard to see that, if a graph is $q$-Ramsey finite, then it cannot be $s_q$-abundant. 
This immediately implies that odd stars are not $s_2$-abundant. On the other hand, we know that even stars \emph{are} $2$-Ramsey infinite, but as we will see below they are also not $s_2$-abundant.
This statement follows from the following result of Burr et al.~\cite{burr1976graphs}.

\begin{theorem}[{\cite{burr1976graphs}}]
Let $m\geq 1$ be an integer. Then a connected graph $G$ is $2$-Ramsey for $K_{1,m}$ if and only if either $\Delta(G) \geq 2m-1$ or $m$ is even and $G$ is a $(2m-2)$-regular graph on an odd number of vertices.
\end{theorem}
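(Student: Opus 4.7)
The plan is to use the observation that $G \rightarrow_2 K_{1,m}$ is equivalent to saying that $E(G)$ admits no partition into two classes $E_1, E_2$ with $\Delta(G[E_1]), \Delta(G[E_2]) \leq m-1$, since a monochromatic $K_{1,m}$ is precisely a vertex with $m$ edges in one color class. Thus the theorem reduces to characterizing exactly when such a partition exists.

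For the ``if'' direction (showing $G \rightarrow_2 K_{1,m}$ under the stated hypotheses), I would first observe that if $\Delta(G) \geq 2m-1$, then pigeonhole at any vertex of maximum degree produces at least $m$ edges of the same color, giving a monochromatic $K_{1,m}$. If instead $m$ is even and $G$ is $(2m-2)$-regular on an odd number of vertices, then in any valid partition every vertex must contribute exactly $m-1$ edges to each class (since $\deg(v) = 2m-2 = (m-1)+(m-1)$), so each $G[E_i]$ is an $(m-1)$-regular graph on $|V(G)|$ vertices; but $|V(G)|(m-1)$ is odd, contradicting the handshake lemma.

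For the ``only if'' direction I assume $\Delta(G) \leq 2m-2$ and that we are not in the exceptional $(2m-2)$-regular case, and construct a valid 2-coloring via an Eulerian circuit argument. Let $T \subseteq V(G)$ be the set of odd-degree vertices, which has even size. If $T \neq \emptyset$, form $G'$ by adding a new vertex $v^*$ joined to each $t \in T$; otherwise set $G' = G$. In either case $G'$ is connected with all even degrees, so it admits an Eulerian circuit. Traverse it and color edges alternately $1, 2, 1, 2, \ldots$. At each internal visit to a vertex $v$, the entering and leaving edges receive different colors, giving balanced contributions to the two color classes at $v$; after discounting the single auxiliary edge possibly incident to $v \in T$, this yields $\max(\deg_{G[E_1]}(v), \deg_{G[E_2]}(v)) \leq \lceil \deg_G(v)/2 \rceil \leq m-1$ for every $v \in V(G)$.

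The main obstacle is the wrap-around at the start/end vertex $v_0$ of the circuit: when $|E(G')|$ is odd, the first and last edges share a color, producing an imbalance of $2$ at $v_0$. The plan is to absorb this by a careful choice of $v_0$. If $T \neq \emptyset$, start at $v^*$, whose color degrees are irrelevant to the existence of a monochromatic $K_{1,m}$ in $G$. If $T = \emptyset$ and $G$ is not $(2m-2)$-regular, then some vertex has even degree at most $2m-4$; starting there yields color degrees at most $(2m-4)/2 + 1 = m-1$ even after the imbalance. Finally, if $T = \emptyset$ and $G$ is $(2m-2)$-regular, the hypothesis excludes the exceptional case, so $m$ is odd or $|V(G)|$ is even; hence $|E(G)| = |V(G)|(m-1)$ is even, the alternation closes up cleanly, and every vertex receives exactly $m-1$ edges of each color.
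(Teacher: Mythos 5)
The paper does not prove this theorem; it simply cites it from Burr, Erd\H{o}s, and Lov\'asz \cite{burr1976graphs}, so there is no ``paper's proof'' to compare against. Your argument is correct and is the natural Eulerian-circuit proof of the edge-splitting fact underlying this statement, so I will assess it on its own terms.

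The reduction to the non-existence of a partition $E=E_1\cup E_2$ with both parts having maximum degree $\leq m-1$ is exactly right, and both halves of the ``if'' direction (pigeonhole at a vertex of degree $\geq 2m-1$; parity obstruction in the regular exceptional case via $|V|(m-1)$ being odd) are clean. For the ``only if'' direction, your bookkeeping is careful where it needs to be: at a vertex $v\neq v_0$ in $T$ the degree $\deg_G(v)$ is odd and hence at most $2m-3$, so even after discarding the auxiliary edge the worse color class has at most $\lceil\deg_G(v)/2\rceil\leq m-1$ edges; at a vertex $v\neq v_0$ with $v\notin T$ the split is exact; and the only possible $\pm1$ imbalance (when $|E(G')|$ is odd, so the alternation wraps around with a repeated color) is confined to $v_0$. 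Your three-way choice of $v_0$ handles all the cases: dumping the imbalance on $v^*$ when $T\neq\emptyset$; starting at a vertex of degree at most $2m-4$ when $T=\emptyset$ but $G$ is not $(2m-2)$-regular, which tolerates the imbalance; and observing that when $T=\emptyset$ and $G$ is $(2m-2)$-regular but the exceptional hypothesis fails, $|E(G)|=|V|(m-1)$ is even so no imbalance occurs at all. Connectivity of $G$ (and hence of $G'$) is used implicitly to guarantee a single Eulerian circuit, and the degenerate cases (single vertex, $m=1$) are vacuous. I see no gaps. One small cosmetic point: you might state explicitly that $|T|$ is even (handshake lemma) to justify that $v^*$ itself has even degree in $G'$, but this is standard.
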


The theorem immediately implies that $\mathcal{M}_2(K_{1,m}) = \set{K_{1,2m-1}}$ if $m$ is odd and $\mathcal{M}_2(K_{1,m}) = \set{K_{1,2m-1}}\cup \set{G : G \text{ is connected and }(2m-2)\text{-regular and } |V(G)|\text{ is odd}}$ if $m$ is even. In particular, this implies that no star is $s_2$-abundant.

More generally, it turns out that stars are not $s_q$-abundant for any $q\geq 2$:
A simple argument implies that, for any $m\geq 1$ and $q\geq 2$, a minimal $q$-Ramsey graph for $K_{1,m}$ has either zero or $q(m-1)+1$ vertices of degree one. Indeed, if $G$ is a minimal $q$-Ramsey graph for $K_{1,m}$ that is not isomorphic to $K_{1, q(m-1)+1}$, then the maximum degree of $G$ is at most $q(m-1)$. Thus, if $G$ contains a vertex $v$ of degree one, then the only neighbor $u$ of $v$ has at most $q(m-1)-1$ other neighbors. By the minimality of $G$, the graph $G-v$ has a $q$-coloring $c$ without a monochromatic copy of $K_{1,m}$. Since $u$ has at most $q(m-1)-1$ neighbors in $G-v$, there is a color that appears at most $m-2$ times on the edges incident to $u$. Then this color can be used on the edge $uv$ to extend $c$ to a $q$-coloring of $G$ without a monochromatic copy of $K_{1,m}$, leading to a contradiction. Hence, $G$ cannot contain a vertex of degree one. 

\medskip

One of the goals of this paper is to initiate the systematic study of $s_q$-abundance: We introduce a tool that can be used to show that a given graph is $s_q$-abundant, and we illustrate the 
utility of our tool by presenting a few applications. First, we show that all cycles of length at least four are $s_q$-abundant. As a byproduct, we determine $s_q(C_t)$ for all $q\geq 2$ and  $t\geq 4$.

\begin{thm}\label{cor:arbitrarily_many_cycles}
For any given integers $q\geq 2$, $t \geq 4$, and $k\geq 1$, there exists a minimal $q$-Ramsey graph for $C_t$ that has at least $k$ vertices of degree 
$q+1$. In particular, $s_q(C_t) = q+1$ and $C_t$ is $s_q$-abundant.
\end{thm}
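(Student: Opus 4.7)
The plan is to prove the theorem in two parts. The lower bound $s_q(C_t) \geq q+1$ follows from a general observation: every minimal $q$-Ramsey graph $G$ for a graph $H$ with $\delta(H) \geq 2$ satisfies $\delta(G) \geq q+1$. If some $v \in V(G)$ had degree $d \leq q$, I would pick an edge $e = vu$ and, by minimality, a good $q$-coloring $c$ of $G - e$. The remaining $d - 1 \leq q - 1$ edges at $v$ use at most $q - 1$ colors under $c$, so some color $j$ is absent from all edges at $v$ other than $e$. Extending $c$ by setting $c(e) = j$ cannot produce a monochromatic $C_t$, since any such cycle through $e$ would require a second $j$-colored edge at $v$. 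This contradicts $G \to_q C_t$ and yields $s_q(C_t) \geq q+1$.

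For the upper bound and $s_q$-abundance, I would, for each $k \geq 1$, construct a minimal $q$-Ramsey graph $G_k$ for $C_t$ containing $k$ designated vertices $v_1, \ldots, v_k$ of degree exactly $q+1$. The natural strategy is to attach $k$ pendant ``bouquets'' to a controlled core, where each bouquet consists of $q+1$ paths of carefully chosen length emanating from $v_i$ and terminating in the core. The forcing principle is pigeonhole at $v_i$: in any $q$-coloring, two of the $q+1$ edges at $v_i$ share a color $j$, which creates a monochromatic path of length two through $v_i$. The core, assembled from the pattern gadgets developed earlier in the paper, would be arranged so that, in any coloring avoiding a monochromatic $C_t$ in the core, the two monochromatic half-edges at $v_i$ are forced to extend into a full monochromatic $C_t$. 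Thus every $q$-coloring of $G_k$ contains a monochromatic $C_t$.

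The main obstacle is proving minimality: for every $e \in E(G_k)$ one must exhibit a $q$-coloring of $G_k - e$ with no monochromatic $C_t$. I would handle this by case analysis depending on whether $e$ lies inside a bouquet, inside the core, or at the interface between them. In each case the argument relies on the ``flexibility'' property of the pattern gadgets (namely, that every single edge of the gadget can be freely colored in some valid coloring), together with the observation that removing an edge incident to a $v_i$ destroys the pigeonhole pair at $v_i$ and thus breaks the local forcing. A secondary technical point is to prevent unintended short cycles through any $v_i$ or degree inflation at $v_i$; this is controlled by making the bouquet paths sufficiently long relative to $t$ and by attaching distinct pendants to disjoint portions of the core. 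Carrying out these verifications for the concrete pattern gadgets of the paper is where the bulk of the work lies, but the degree count $\deg_{G_k}(v_i) = q+1$ will be immediate from the construction, giving simultaneously $s_q(C_t) \leq q+1$ and $s_q$-abundance of $C_t$.
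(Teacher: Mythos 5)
Your lower bound argument is sound and essentially equivalent to the paper's (the paper deletes the vertex $v$ rather than an edge, but both pigeonhole-free colorings work). For the construction, your broad outline — attach $k$ small-degree vertices to a pattern-gadget core and force a monochromatic $C_t$ via pigeonhole on the $q+1$ edges at each $v_i$ — is the right shape. However, two things are missing.

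First, and crucially, you never specify the \emph{design of the pattern family} $\mathscr{G}$, which is where the actual work lies. The paper builds $k$ disjoint copies $F_1,\dots,F_k$ of a graph with a distinguished $(q+1)$-set $W_i$, where each pair $u,w\in W_i$ is joined by $q$ internally disjoint $(t-2)$-paths. Two patterns are defined on a single copy $F$: a ``dangerous'' one $f_1$ (each $u$--$w$ path monochromatic, the $q$ paths for a fixed pair in $q$ distinct colors) and a ``safe'' one $f_2$ (no path monochromatic). The family $\mathscr{G}$ consists of exactly those patterns in which \emph{exactly one} $F_i$ carries $f_1$ and all others carry $f_2$. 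Your description (``the two monochromatic half-edges at $v_i$ are forced to extend into a full monochromatic $C_t$'') reads as if this holds at every $v_i$, but that would make it impossible to establish that any single $v_i$ is removable: even after deleting $v_1$, a dangerous pattern at $F_2$ together with the degree-$(q+1)$ pigeonhole at $v_2$ still yields a monochromatic $C_t$. The one-dangerous-copy design is precisely what lets the gadget simultaneously force a monochromatic $C_t$ somewhere (property~\ref{def:patterngadget_somepattern}) and allow a safe coloring once $v_i$ is deleted, by choosing the pattern in which $F_i$ is the dangerous one (property~\ref{def:patterngadget_fixedpattern}).

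Second, you propose to verify minimality by exhibiting a good coloring of $G_k-e$ for \emph{every} edge $e$, including edges inside the gadget. This is both harder and unnecessary. The paper instead proves only that $\widetilde G\rightarrow_q C_t$ and that $\widetilde G - v_i\not\rightarrow_q C_t$ for each $i$, then passes to \emph{any} minimal $q$-Ramsey subgraph $\widetilde G'\subseteq\widetilde G$. Each $v_i$ must survive in $\widetilde G'$, and since $q+1\le s_q(C_t)\le d_{\widetilde G'}(v_i)\le d_{\widetilde G}(v_i)=q+1$, the degree is pinned at $q+1$. You should adopt this shortcut: it renders the bulk of your ``main obstacle'' paragraph moot. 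The one nontrivial coloring you do need to exhibit (that of $\widetilde G - v_i$) uses the pattern with $f_1$ on $F_i$ and $f_2$ elsewhere, extended to the gadget by~\ref{def:patterngadget_fixedpattern}, and then to the remaining vertices $v_j$ ($j\neq i$) arbitrarily; the $C_t$-robustness of the pair $(P,G)$ confines any monochromatic $C_t$ to a single $V_j\cup\{v_j\}$, where $f_2$ forbids it.
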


It turns out that the cycle $C_3$ behaves differently compared to longer cycles with respect to the value of $s_q$. Its behavior is consistent with that of a clique, and we know from an earlier discussion that $s_2(K_3) = 4$ and $s_q(K_3) = \Theta (q^2 \log q)$ as a function of $q$.  While the value of $s_q$ for $K_3$, or for any larger clique, is not known precisely when $q>2$, our theory still allows us to show that any clique $K_t$ for $t\geq 3$ is $s_q$-abundant for any value of $q$. In fact, Theorem~\ref{cor:arbitrarily_many_cliques} below is a consequence of a more general result that will be presented in Section~\ref{sec:applications}.

\begin{thm}\label{cor:arbitrarily_many_cliques}
For any given integers $q\geq 2$ and $t\geq 3$, the clique $K_t$ is $s_q$-abundant.
\end{thm}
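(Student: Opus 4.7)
The natural approach is to start from a single minimal $q$-Ramsey graph $G_0$ for $K_t$ that attains $\delta(G_0)=s_q(K_t)$ at some vertex $v_0$ (such a $G_0$ exists by definition of $s_q$), take $k$ vertex-disjoint copies $(G_1,v_1),\ldots,(G_k,v_k)$ of the marked graph $(G_0,v_0)$, and weld them into a single graph $G\in\mathcal{M}_q(K_t)$ in such a way that each $v_i$ retains its original degree $s_q(K_t)$. This is precisely the kind of amplification the pattern gadgets of the paper are designed to enable, so I expect the theorem to follow by verifying the hypotheses of the general pattern-gadget-based result (announced in Section~\ref{sec:applications}) for the target graph $K_t$.

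Concretely, I would proceed in three steps. First, choose $G_0$ and $v_0$ and record the family $\mathcal{C}$ of $K_t$-free $q$-colourings of $G_0-v_0$; by minimality of $G_0$, none of these extends across $v_0$ without creating a monochromatic $K_t$. Second, attach to each copy $G_i$ a pattern gadget that interacts with $G_i$ only at vertices other than $v_i$ (so that $\deg_G(v_i)=s_q(K_t)$) and whose internal vertices all have degree strictly larger than $s_q(K_t)$. The role of this gadget is to read off, from any $K_t$-free local colouring of $G_i$, a ``pattern'' $\pi_i$ on a designated boundary edge set and to broadcast it to a common connecting gadget. Third, design that connecting gadget so that no tuple $(\pi_1,\ldots,\pi_k)$ of patterns arising from $K_t$-free colourings of the individual $G_i$'s is simultaneously realisable; this is what forces $G\rightarrow_q K_t$.

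For minimality, for every edge $e$ of $G$ I need a $K_t$-free $q$-colouring of $G-e$. If $e$ lies inside a gadget, the self-minimality built into pattern gadgets supplies such a colouring. If $e$ lies inside some $G_i$, I would use the minimality of $G_0$ to obtain a $K_t$-free $q$-colouring of $G_i-e$, then exploit the flexibility of pattern gadgets (they realise every admissible boundary pattern by some $K_t$-free colouring) to extend this to the other copies and the central gadget, producing a tuple that deliberately violates the central incompatibility constraint.

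The main obstacle, and the reason the pattern-gadget formalism is introduced, is constructing gadgets for $K_t$ with the right coexistence of properties: they must be $K_t$-free, realise all intended boundary patterns exactly under $K_t$-free colourings, be edge-minimal subject to these constraints, and have internal minimum degree strictly exceeding $s_q(K_t)$ so that no spurious low-degree vertices are created. Classical positive and negative signal senders of Burr, Erd\H{o}s, and Lov\'asz for $K_t$ (and their multicolour extensions) suggest that such gadgets should exist, but engineering them with the precise degree control needed for this theorem is the technical heart of the argument; once the gadgets are in hand, the assembly and verification sketched above are routine.
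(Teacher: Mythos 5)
Your high-level plan—replicate a small-degree vertex $v_0$ across $k$ disjoint copies and weld them together with pattern gadgets—is indeed the intended strategy, but the proposal has two substantive gaps.

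First, the minimality treatment is not sound. You propose to show that the assembled graph $G$ itself is minimal $q$-Ramsey, relying on a ``self-minimality built into pattern gadgets'' to supply $K_t$-free colourings of $G-e$ when $e$ sits inside a gadget. Pattern gadgets have no such built-in minimality, and the constructed graph $\widetilde{G}$ is in fact far from minimal. What the paper does instead is weaker and cleaner: it shows that $\widetilde{G}\rightarrow_q K_t$ and that $\widetilde{G}-g\not\rightarrow_q K_t$ only for the $k\cdot s_q(K_t)$ edges $g$ incident to the special vertices $v_1,\ldots,v_k$. Then any minimal $q$-Ramsey subgraph $\widetilde{G}'\subseteq\widetilde{G}$ must contain all those edges, forcing $d_{\widetilde{G}'}(v_i)=s_q(K_t)$ for each $i$. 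You never need $\widetilde{G}$ itself to be minimal, and you never need to exhibit colourings for edges deep inside the gadgets.

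Second, and more fundamentally, starting from an arbitrary $G_0\in\mathcal{M}_q(K_t)$ with a vertex of degree $s_q(K_t)$ is not enough. The general reduction (Theorem~\ref{thm:arbitrarily_many_3-connected}) requires a $q$-Ramsey graph $F$ with a low-degree vertex $v$ \emph{and} an edge $e$ such that $v$ and $e$ do not share a copy of $K_t$ and $F-e\not\rightarrow_q K_t$. This distant edge $e$ is essential: it is what lets you exhibit, for each edge $g_j$ at $v$, a $K_t$-free colouring of $F-v-e$ that extends over $F-\{e,g_j\}$ but not over $F-e$—the pair of properties that drives both the Ramsey direction and the edge-criticality of $g_j^i$ in the assembled graph. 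Your ``record the family $\mathcal{C}$ of $K_t$-free colourings of $G_0-v_0$'' does not isolate this dichotomy and cannot, without such an $e$. Securing an $F$ with these properties is itself nontrivial: the paper revisits the Fox–Grinshpun–Liebenau–Person–Szab\'o construction and proves a new inequality $s_q(K_t)\leq r_q(K_t)-2$ (Claims~\ref{claim:Mimportant} and~\ref{claim:Pqbound}) to guarantee the distant edge $e$ can be removed while keeping $F-e$ non-Ramsey. This step is absent from your outline and is the genuine technical content of the proof beyond the generic pattern-gadget machinery.
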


As a third application of our theory, we show that a clique with a pendant edge is $s_2$-abundant. We note that, since $s_2(K_t) = (t-1)^2$ and $s_2(K_t\cdot K_2) = t-1$ for all $t\geq 3$, Theorem~\ref{cor:arbitrarily_many_Kt.k2} also yields that there are infinitely many graphs that are minimal $2$-Ramsey for $K_t\cdot K_2$ but not minimal $2$-Ramsey for $K_t$. One of the main building blocks used in the construction of our main tool is not known to exist for $K_t\cdot K_2$ when $q>2$, which is why we focus on the case $q=2$. 

\begin{thm}\label{cor:arbitrarily_many_Kt.k2}
For a given integer $t \geq 3$, the graph $K_t \cdot K_2$ is $s_2$-abundant.
\end{thm}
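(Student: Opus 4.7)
The plan is to invoke the pattern gadget machinery developed earlier in the paper, combined with the known construction of Fox et al.~\cite{fox2014ramsey} of a minimal $2$-Ramsey graph for $K_t\cdot K_2$ that witnesses $s_2(K_t\cdot K_2)=t-1$. The overall strategy mirrors the proofs of Theorems~\ref{cor:arbitrarily_many_cycles} and~\ref{cor:arbitrarily_many_cliques}: starting from a single minimal $2$-Ramsey graph $F_0$ for $K_t\cdot K_2$ that contains a vertex $v_0$ of degree $t-1$, one uses pattern gadgets to manufacture many additional vertices of degree $t-1$, while preserving the minimal $2$-Ramsey property.

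More concretely, let $v_0$ be a vertex of degree $t-1$ in $F_0$ and let $T_0\subseteq V(F_0)$ denote its $t-1$ neighbors. First I would isolate a small \emph{pendant block} $P_0$, essentially the subgraph induced by $\set{v_0}\cup T_0$, together with whatever auxiliary edges are necessary to pin down the role it plays in every $K_t\cdot K_2$-free $2$-coloring of $F_0$. Next I would form the graph obtained by taking $F_0$ together with $k-1$ vertex-disjoint copies $P_1,\dots,P_{k-1}$ of $P_0$, each contributing a new candidate vertex $v_i$ of degree $t-1$. Finally I would link $F_0$ to each pendant block $P_i$ via suitable pattern gadgets, chosen so that in every $K_t\cdot K_2$-free $2$-coloring of the resulting graph $G$, the local coloring around every $v_i$ is forced to agree with the forbidden local coloring around $v_0$ in $F_0$.

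The Ramsey property of $G$ then reduces to that of $F_0$: any $2$-coloring of $G$ either already fails inside one of the pattern gadgets (producing a monochromatic $K_t\cdot K_2$ there), or else it propagates the forbidden pattern to every $P_i$, yielding a monochromatic $K_t\cdot K_2$ either in $F_0$ or in one of the pendant blocks. Minimality is more delicate, and I would split into cases on the location of the edge $e$ being deleted. For $e\in E(F_0)$, I would take a $K_t\cdot K_2$-free $2$-coloring of $F_0-e$ guaranteed by minimality of $F_0$ and extend it through the gadgets to $G-e$; for $e$ inside a pattern gadget, I would invoke the standard minimality property of the gadget (every edge is needed to transmit the pattern); and for $e$ inside some $P_i$ with $i\geq 1$, I would swap the roles of $P_0$ and $P_i$, using the symmetry of the construction to reduce to the case $e\in E(F_0)$.

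The hardest step is the construction (or invocation) of pattern gadgets tailored to $K_t\cdot K_2$ in two colors: they must force the desired coloring pattern between distant edges without creating any unintended monochromatic copy of $K_t\cdot K_2$, either inside a single gadget or through interactions between gadgets that meet at shared vertices. This is exactly the building block that is presently only known to exist when $q=2$, which is why the abundance result for $K_t\cdot K_2$ does not immediately extend to more colors. Once the pattern gadgets are in place and the splicing is performed carefully, the verification of both the Ramsey property and the minimality should reduce to a finite number of local checks, each of which follows from the corresponding property of $F_0$ or of the gadget.
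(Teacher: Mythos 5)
Your high-level direction (pattern gadgets attached to many local ``pendant blocks,'' each producing a degree-$(t-1)$ vertex) is in the right spirit, but several of the concrete steps either contain gaps or go against how the machinery in this paper actually works.

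First, the minimality argument is mis-aimed. Pattern gadgets are \emph{not} minimal Ramsey graphs and there is no ``standard minimality property of the gadget (every edge is needed to transmit the pattern).'' In fact the gadgets are deliberately built with a great deal of redundancy. The paper never shows that the constructed graph $\widetilde{G}$ is itself minimal; instead it shows only that $\widetilde{G}\rightarrow_2 H$ and that $\widetilde{G}-v_i\not\rightarrow_2 H$ for each of the $k$ special vertices $v_i$. From this it follows that any minimal Ramsey subgraph of $\widetilde{G}$ must retain all $v_i$, and the degree bound $t-1\leq s_2(H)\leq d_{\widetilde{G}'}(v_i)\leq d_{\widetilde{G}}(v_i)=t-1$ pins down their degree. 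Your proposed case analysis on which edge $e$ is deleted is therefore the wrong target: for $e$ inside a pattern gadget the statement you want to prove is usually false, and for $e$ inside a pendant block $P_i$ with $i\geq 1$, the proposed ``swap of roles'' between $P_0$ and $P_i$ has no basis because $F_0$ (a full minimal Ramsey graph) and $P_i$ (a small pendant block) are not symmetric to each other.

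Second, Theorem~\ref{thm:pattern_gadgets_existence}\ref{thm:pattern_gadgets_existence:pendant} --- the only part that covers $H\cong K_t\cdot K_2$ --- explicitly requires the base graph $G$ inside the pattern gadget to contain \emph{no} copy of $H$. A minimal $2$-Ramsey graph $F_0$ for $K_t\cdot K_2$ is saturated with copies of $K_t\cdot K_2$, so if $F_0$ were kept as part of the pattern gadget's base graph (as your phrasing ``link $F_0$ to each pendant block via pattern gadgets'' suggests), the hypothesis fails and the gadget does not exist. The paper sidesteps this completely by \emph{not} using a pre-existing minimal Ramsey graph at all. Instead, it takes the base graph to be $k$ disjoint copies of $F:=(t-1)\cdot K_t$ (which is $K_t\cdot K_2$-free), fixes a set $W_i$ of one distinguished vertex from each $K_t$ in the $i$-th copy, and defines two very specific color patterns on $F$: the all-red pattern $f_1$ and a pattern $f_2$ in which no $K_t$ is monochromatic. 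The family $\mathscr{G}$ consists of those patterns that are $\cong f_1$ on exactly one copy and $\cong f_2$ on all others. Then, crucially, the added structure around each new vertex $v_i$ is not just the $t-1$ edges from $v_i$ to $W_i$: the paper also makes $W_i\cup\{v_i\}$ into a clique $K_t$ and hangs an extra pendant edge $e_i$ off a vertex of $W_i$. It is precisely this extra clique plus pendant edge that converts the forced pattern $f_1$ into a forced monochromatic $K_t\cdot K_2$ in either color. Your ``pendant block $P_0$, essentially the subgraph induced by $\{v_0\}\cup T_0$, together with whatever auxiliary edges are necessary'' leaves exactly this central design choice unspecified, and there is no reason to expect the induced subgraph of $F_0$ on $\{v_0\}\cup T_0$ to have the needed rigidity.

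In short: the overall strategy of manufacturing many low-degree vertices via pattern gadgets is right, but you should (a) abandon the idea of proving minimality of the full construction and instead prove $\widetilde{G}\rightarrow_2 H$ together with $\widetilde{G}-v_i\not\rightarrow_2 H$ and extract a minimal subgraph; (b) not reuse $F_0$ inside the pattern gadget's base graph, since that violates the $K_t\cdot K_2$-freeness hypothesis of Theorem~\ref{thm:pattern_gadgets_existence}\ref{thm:pattern_gadgets_existence:pendant}; and (c) specify the pendant block precisely as in the paper, i.e., a copy of $(t-1)\cdot K_t$ with distinguished vertices $W_i$, a new apex $v_i$ completing $W_i\cup\{v_i\}$ to a $K_t$, and an extra pendant edge $e_i$.
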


In order to prove the statements above, we will first construct new gadget graphs, called \emph{pattern gadgets}. These generalize other well-known gadgets such as \emph{signal senders}, originally developed by Burr, Erd\H{o}s, and Lov\'asz~\cite{burr1976graphs} to study $s_2(K_t)$, and 
\emph{$2$-colorings gadgets} and 
\emph{one-in-m gadgets}, developed by Siggers in~\cite{siggers2014nonbipartite}.
Our pattern gadgets help us construct minimal Ramsey graphs with many vertices of small degree. Given the utility of signal senders in studying 
properties of the set $\mathcal{M}_q(H)$ for various graphs $H$, our pattern gadgets might also be of independent interest and find further applications.

In a nutshell, the main idea of our pattern gadgets will be the following: Given some graph $G$ and some family $\mathscr{G}$ of colorings of $E(G)$
with $q$ colors that do not contain monochromatic copies of $H$,
we will find some larger graph $P$ containing $G$ such that the colorings in $\mathscr{G}$
are exactly those colorings of $G$ that
can be extended to $P$ without creating a monochromatic copy of $H$.
Then, in order to prove each of the above theorems,
we will choose $G$ and $\mathscr{G}$ in such a way that we can attach $k$ small-degree vertices to $G\subseteq P$ so that no coloring in $\mathscr{G}$ can be extended to the new edges without creating a monochromatic copy of $H$, but if we remove any of these new vertices, we can find a coloring in $\mathscr{G}$
that can be extended in the desired way.

The precise definition of a pattern gadget will be given in Section~\ref{sec:pattern_gadgets_construction}. We will show their existence for many target graphs $H$, including all $3$-connected graphs.

\medskip

{\bf Organization of the paper.} 
In Section~\ref{sec:pattern_gadgets_construction},
we introduce all necessary auxiliary gadgets and prove the existence
of pattern gadgets.
Afterwards, we continue with the proofs of the
Theorems~\ref{cor:arbitrarily_many_cycles},~\ref{cor:arbitrarily_many_cliques}, \ref{cor:arbitrarily_many_Kt.k2} in Section~\ref{sec:applications}, where we also prove a general statement regarding $3$-connected graphs.  
We end with some concluding remarks and open problems in Section~\ref{sec:conclusion}. 
\medskip

{\bf Notation.} 
Given an integer $n\geq 1$, we write $[n]$ for the set of the first $n$ positive integers.
For a graph $G$, we denote its vertex set by $V(G)$ and its edge set by $E(G)$.
For any edge $\{v,w\}\in E(G)$, we write $vw$ for short.
We let 
$N_G(v)=\{w\in V(G):~ vw\in E(G)\}$
denote the neighborhood of $v$ in $G$, $d_G(v)=|N_G(v)|$ denote the degree of $v$ in $G$, $\delta(G)=\min\{d_G(v):~ v\in V(G)\}$, and $\Delta(G)=\max\{d_G(v):~ v\in V(G)\}$ denote the minimum degree and maximum degree of $G$ respectively. 

For a graph $G$ and vertex subsets $A$ and $B$ of $G$, we denote by $E_G(A,B)$ the edges in $G$ with one endpoint in $A$ and another in $B$. Also, $E_G(A)$ denotes the edges in $G$ with both endpoints in $A$. We sometimes identify a graph $G$ with its edge set.

Let $F$ and $G$ be two graphs. We say that $F$ and $G$ are isomorphic, denoted by
$F\cong G$,
if there exists a bijection $f:V(F)\rightarrow V(G)$
such that $vw \in E(F)$ if and only if $f(v)f(w)\in E(G)$. In this case, we also say that $F$ forms a copy of $G$.

We say that $F$ is a subgraph of $G$, denoted by $F\subseteq G$, if there is an injective map  $f: V(F)\rightarrow V(G)$ such that $f(x)f(y) \in E(G)$ for all $xy \in E(F)$; further, $F$ is a proper subgraph of $G$ if $F\subseteq G$ and $F\neq G$. 
Given any subset $A\subseteq V$, the subgraph induced by $A$, denoted $G[A]$, is the graph with vertex set $A$ and edge set $E_G(A)$.
Moreover, we set $G-v=G[V(G)\setminus {v}]$ and
$G-e=(V(G),E(G)\setminus \{e\})$ for any $v\in V(G)$ and $e\in E(G)$. 
If $F \cong G[A] $ for some $A \subseteq V(G)$, then we say that $F$ is an induced subgraph of $G$ and write $F\subseteq_{ind} G$.

Given a graph $G$ and any subsets $A$ and $B$
of the vertex set or the edge set of $G$, 
we define the {\em distance} between $A$ and $B$, denoted $\dist_G(A,B)$, to be the number of edges in a shortest path with one endpoint in (the vertex set of) $A$ and one endpoint in (the vertex set of) $B$.
The girth of $G$, denoted $\text{girth}(G)$,
is the length of a shortest cycle in $G$ (if $G$ is acyclic, then $\text{girth}(G)$ is defined to be infinity).
A graph $G$ is said to be $k$-connected if it has more than $k$ vertices and, for any set $S$ of at most $k-1$ vertices, the graph $G[V(G)\setminus S]$ is connected.
 
In the rest of the paper, a coloring of some graph $G$ always refers to a coloring of its edge set. If $G$ contains no monochromatic subgraph isomorphic to $H$ under a given coloring, the coloring is said to be $H$-free.
If a coloring uses at most $q$ colors, we call it a $q$-coloring. Unless otherwise specified, we will assume in this case that our color palette is the set $[q]$. If we are only concerned with the case $q=2$, for the sake of convenience we will sometimes call our colors red and blue instead of color 1 and color 2.
If $c$ is a $q$-coloring of $G$ and some subgraph $F$ is monochromatic in some color $i$, we will sometimes write $c(F) = i$. Similarly, when defining colorings, we will write for example $c(F) = i$ to indicate that we give color $i$ to every edge of the subgraph $F$.

\section{Construction of Pattern gadgets}\label{sec:pattern_gadgets_construction}

Most of our constructions of minimal Ramsey graphs will rely
on the existence of certain gadget graphs; these graphs will have the property that, in every coloring not containing a monochromatic copy of our target graph $H$, some fixed \emph{color patterns} need to appear on certain sets of edges.
Such an approach has already been used in the paper of
Burr et al.~\cite{burr1976graphs} when proving that $s_2(K_t)=(t-1)^2$. In their paper, the authors introduced gadget graphs that are now known as {BEL gadgets} and are defined as follows: Let $H$ and $G$ be fixed graphs such that $G\not\rightarrow_q H$, and let $\varphi$ be an $H$-free $q$-coloring of $G$; a \emph{BEL gadget} for $H$ with respect to the pair $(G, \varphi)$ is a graph $B$ containing $G$ as an induced subgraph such that $B$ is not $q$-Ramsey for $H$ but in every $H$-free $q$-coloring of the edges of $B$, the subgraph $G$ has the coloring given by $\varphi$ (up to a permutation of colors). Burr et al.\ showed the existence of BEL gadgets for all cliques on at least three vertices when $q=2$ (for any appropriate choice of $G$ and $\varphi$). Later results imply that BEL gadgets exist for more general graphs and for more colors; we will give an overview of those results in Section~\ref{sec:indicators}.

With such a gadget at hand, to construct a minimal $q$-Ramsey graph for $H$ that contains a vertex of degree at most $d$, it suffices to find a graph $G$ that contains a vertex $v$ of degree $d$ and a $q$-coloring $\varphi$ of $ G-v$ that contains no monochromatic copy of $H$ but cannot be extended to an $H$-free coloring of $G$. Indeed, we can construct $\widetilde{G}$ by taking a copy $G'$ of $G-v$ and a BEL gadget for $H$ with respect to $(G', \varphi)$ and adding the vertex $v$ along with $d$ edges so that $V(G')\cup\set{v}$ induces a copy of $G$.
Now it is not difficult to check that $\widetilde{G}\rightarrow_q H$, and if $H$ satisfies certain conditions, then we can also ensure that $\widetilde{G}-v\not\rightarrow_q H$. This means that any minimal $q$-Ramsey subgraph of $\widetilde{G}$ needs to contain $v$, that is, $v$ is \emph{important} for 
$\widetilde{G}$ to be a $q$-Ramsey graph, and $s_q(H)\leq d_{\widetilde{G}}(v)$.

For our main theorems, we will aim to find graphs $\widetilde{G}$ with many vertices of small degree, each of which is important for 
$\widetilde{G}$ to be a Ramsey graph for $H$.
In order to do so, we will construct a gadget that allows for more flexibility than a BEL gadget.
The new gadget again comes with a subgraph $G$ on which fixed color patterns 
are forced in any $H$-free $q$-coloring. However, while for a BEL gadget we fix only a single pattern (up to a permutation of the color classes),
our gadget graph allows us to fix a family of color patterns for $G$ such that each of these patterns, and no other, can be extended to an $H$-free coloring of the whole graph.

To make this more precise, let us first define color patterns and an isomorphism relation between them.

\begin{definition}\label{def:pattern}\rm 
Let $q \geq 2$ be a given integer and $H$ and $G$ be graphs. A \emph{$q$-color pattern} for $G$ is a partition $g=\{G_1,G_2,\ldots,G_q\}$ of the edges of $G$.
If $H\not\subseteq G_i$ for every $i\in [q]$, we say that $g$ is \emph{$H$-free}.
Given any subset $A\subseteq V(G)$, we call the partition $g[A]=\{G_1[A],G_2[A],\ldots,G_q[A]\}$
the \emph{induced $q$-color pattern} on $A$. 

Let $G'$ be a copy of $G$, and let $g'=\{G'_1,\ldots,G'_q\}$
be a $q$-color pattern for $G'$. Then we say that $g$ and $g'$
are \emph{isomorphic}, denoted $g\cong g'$,
if there exists a permutation $\pi$ of $[q]$
such that $G_i\cong G'_{\pi(i)}$ for every $i\in [q]$.
\end{definition}

Using the above terminology, we can now give a precise definition of the gadget graphs that we are interested in.

\begin{definition}\label{def:pattern_gadgets}\rm
Let $q\geq 2$ be a given integer and $H$ and $G$ be graphs such that $G\not\rightarrow_{q} H$. 
Also let $\mathscr{G}$ be a family of $H$-free $q$-color patterns for $G$. 
Then we call a graph $P = P(H, G, \mathscr{G}, q)$ a \emph{pattern gadget} if the following properties hold:
\begin{enumerate}[label=\itmarab{P}]
    \item\label{def:patterngadget_subgraphG} $G\subseteq_{ind}P$.
    \item\label{def:patterngadget_somepattern} If $c:E(P)\rightarrow [q]$ is an $H$-free coloring of $P$, then 
    $\{c_{|G}^{-1}(1), \dots, c_{|G}^{-1}(q)\} \in \mathscr{G}$.
    \item\label{def:patterngadget_fixedpattern} For every pattern in $\{G_1,\dots, G_q\}\in \mathscr{G}$, there exists an $H$-free coloring $c:E(P)\rightarrow [q]$ such that $\{c_{|G}^{-1}(1), \dots, c_{|G}^{-1}(q)\} = \{G_1,\dots, G_q\} $.
\end{enumerate}
\end{definition}

The rest of this section is mainly devoted to the proof that pattern gadgets exist for certain choices of the graph $H$.
In the proof, we will combine various intermediate gadgets and for that to work we will often require them to satisfy an additional property that we refer to as robustness (following Grinshpun~\cite{grinshpun2015some}).
We will also require that our final gadgets satisfy this property, which will be useful in applications.

\begin{definition}\label{def:robustness}\rm
Let $G$ be a graph and $G_0$ be an induced subgraph of $G$. We say that the pair $(G,G_0)$ is $H$-robust if, in any graph obtained from $G$ by adding any set $S$ of new vertices and any collection of edges within $S \cup V(G_0)$, every copy of $H$ is entirely contained either in $G$ or in the subgraph induced by $S \cup V(G_0)$. 
\end{definition}

The main theorem of this section states that, if $H$ is 3-connected or isomorphic to a cycle or $K_t\cdot K_2$,
then pattern gadgets that satisfy certain robustness properties exist for $H$.

\begin{theorem}\label{thm:pattern_gadgets_existence}
Let $q\geq 2$ be a given integer, and let $H$ and $G$ be graphs with $G\not\rightarrow_{q} H$. Further, let
$\mathscr{G}$ be a family of $H$-free 
$q$-color patterns for $G$.
\begin{enumerate}[label=(\alph*)]
    \item\label{thm:pattern_gadgets_existence:3connected} If $H$ is 3-connected or a triangle, then a pattern gadget $P = P(H, G, \mathscr{G}, q)$ exists.
    \item\label{thm:pattern_gadgets_existence:cycles} If $H$ is a cycle of length at least four, then a pattern gadget $P = P(H, G, \mathscr{G}, q)$ exists.
    \item\label{thm:pattern_gadgets_existence:pendant} If $H\cong K_t\cdot K_2$ and $q=2$ and $G$ does not contain a copy of $H$, then a pattern gadget $P = P(H, G, \mathscr{G}, q)$ exists. Further, we can ensure that in the $2$-colorings in \ref{def:patterngadget_fixedpattern} every monochromatic copy of $K_t$ using a vertex from $G$ is fully contained in $G$.
\end{enumerate}
Further, in parts \ref{thm:pattern_gadgets_existence:3connected} and \ref{thm:pattern_gadgets_existence:cycles}, the pattern gadget can be taken so that $(P,G)$ is $H$-robust, and in part \ref{thm:pattern_gadgets_existence:pendant}, it can be taken so that $(P,G)$ is $K_t$-robust. 
\end{theorem}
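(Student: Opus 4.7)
The proof will proceed in three nested stages, constructing increasingly powerful gadgets from simpler pieces. In the first stage, I would invoke the existence of (positive and negative) $H$-signal senders --- graphs $S^\pm$ with two distinguished edges whose colors must agree (respectively differ) in every $H$-free $q$-coloring of $S^\pm$. For $H$ 3-connected (and in particular for the triangle), these are due to Burr--Erd\H{o}s--Lov\'asz and R\"odl--Siggers; for cycles of length at least four, they are available via existing constructions in the literature; and for $H=K_t\cdot K_2$ with $q=2$ they follow from the construction of Fox, Grinshpun, Liebenau, Person, and Szab\'o. In each case one can ensure that the resulting senders are $H$-robust in the sense of Definition~\ref{def:robustness}, and in the $K_t\cdot K_2$ case that they carry the extra ``localized monochromatic $K_t$'' property needed for the final sentence of \ref{thm:pattern_gadgets_existence:pendant}.

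Second, I would use these signal senders to build a standard BEL gadget $B_\varphi$ for each $H$-free $q$-coloring $\varphi$ of $G$: attach a positive sender between each pair of edges of $G$ receiving the same color under $\varphi$ and a negative sender between each pair receiving distinct colors. A global permutation argument then yields $B_\varphi\not\to_q H$ and that every $H$-free coloring of $B_\varphi$ restricts on $G$ to $\varphi$ up to a permutation of the color classes. Robustness of the senders, together with the fact that $G$ sits in $B_\varphi$ as an induced subgraph, carries over to show that $(B_\varphi,G)$ is $H$-robust.

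The third and main stage is to promote the BEL gadgets, which fix a single pattern, to pattern gadgets, which permit an entire prescribed family $\mathscr{G}=\{g^{(1)},\ldots,g^{(k)}\}$. I plan to realize the disjunction over $\mathscr{G}$ by first constructing a one-in-$k$ \emph{selector} in the style of Siggers: a graph $M$ with distinguished edges $f_1,\ldots,f_k$ such that in every $H$-free $q$-coloring of $M$ exactly one $f_i$ is ``active'' and each of the $k$ activations is realized by some $H$-free coloring. I would then couple $M$ with a conditional BEL structure for each $i$, built from signal senders and a copy of $B_{g^{(i)}}$ on a private copy $\widetilde{G}_i$ of $G$, so that when $f_i$ is active the signal senders force the coloring of $\widetilde{G}_i$, and hence that of $G$ (tied to $\widetilde{G}_i$ via further positive senders), to follow $g^{(i)}$, while when $f_i$ is inactive the conditional structure admits an $H$-free extension irrespective of the coloring of $G$. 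Taking $P$ to be the union of $G$, $M$, and the $k$ conditional structures --- identified along $G$ and along the $f_i$ --- yields the required pattern gadget. Robustness of all auxiliary gadgets guarantees that every copy of $H$ in $P$ lies inside a single sub-gadget, so $(P,G)$ is itself $H$-robust; in case \ref{thm:pattern_gadgets_existence:pendant}, the localization property inherited from the $K_t\cdot K_2$ senders upgrades this to $K_t$-robustness and to the extra conclusion on monochromatic $K_t$'s.

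The main obstacle is the design of the conditional BEL link in the third stage. The combinatorial difficulty is to ensure simultaneously that (i) an active $f_i$ really forces the pattern $g^{(i)}$ on $G$, (ii) an inactive $f_j$ imposes no constraint on the coloring of $G$, and (iii) no monochromatic copy of $H$ arises across gadget boundaries. Point (iii) is handled by the robustness of every sender and selector, while points (i) and (ii) require a careful case analysis and a specific design of the conditional link via auxiliary signal senders whose ``trivial extension'' exists precisely when $f_i$ is inactive. This is where the bulk of the technical work of the section will lie, and it is also the place where the distinction between cases \ref{thm:pattern_gadgets_existence:3connected}, \ref{thm:pattern_gadgets_existence:cycles}, and \ref{thm:pattern_gadgets_existence:pendant} enters, via the availability and precise properties of the signal senders invoked in the first stage.
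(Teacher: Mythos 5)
Your high-level strategy---lift signal senders to BEL-style gadgets, then realize a disjunction over the family of patterns via a one-in-$k$ selector---parallels the paper's philosophy but breaks down at the crucial ``conditional BEL link'' in your third stage, and the breakdown is not a technical detail to be filled in later; the mechanism as described cannot work.

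The problem is that every component you propose for the conditional link is unconditional. You want to attach a BEL gadget $B_{g^{(i)}}$ to a private copy $\widetilde{G}_i$ and tie $\widetilde{G}_i$ to $G$ by positive signal senders. But every $H$-free coloring of $B_{g^{(i)}}$ forces the pattern $g^{(i)}$ on $\widetilde{G}_i$, and positive senders equalize the colors of their signal edges in \emph{every} $H$-free coloring, regardless of the state of $f_i$. There is no ``off switch'' inside a signal sender. Consequently $G$ would be forced to carry $g^{(i)}$ for \emph{every} $i$ simultaneously, which is contradictory as soon as $|\mathscr{G}|>1$. You flag the design of the conditional link as where the bulk of the technical work lies, but no amount of work with signal senders alone will supply the required conditionality.

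What is actually needed is a strictly more expressive gadget. The paper uses \emph{indicators}: a positive indicator $I^+(H,F,e,q,d)$ contains $F$ as an induced subgraph and a far-away edge $e$, and has the two properties that (a) in every $H$-free coloring in which $F$ is monochromatic, $e$ receives the color of $F$, and (b) if $F$ is colored non-constantly, then \emph{any} choice of color for $e$ extends to an $H$-free coloring of the whole indicator. Property (b) is precisely the conditionality you are missing. The paper then constructs \emph{generalized negative indicators}, in which $e$ is replaced by a graph together with a prescribed $(q-1)$-part partition that is forced in its entirety when $F$ is monochromatic and is entirely unconstrained otherwise. Finally, the disjunction over $\mathscr{G}$ is realized not by a bespoke one-in-$k$ selector and $k$ private copies of $G$, but by a single matching $M$ of size $(r-1)q+1$ together with a surjection $s:\binom{M}{r}\to\mathscr{G}$: by the pigeonhole principle, in any $q$-coloring of $M$ some $r$-subset $A$ is monochromatic, the indicators attached to $A$ then force the pattern $s(A)$ on $G$, while for all non-monochromatic $r$-subsets property (b) and its generalized analogue guarantee that no constraint is transmitted to $G$. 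This design is both simpler than what you sketch and, unlike your proposal, sound; without the indicator gadget (or something with an equivalent ``vanishing when inactive'' property) the third stage of your argument cannot be completed.
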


Before we give the proof of Theorem~\ref{thm:pattern_gadgets_existence}
in Section~\ref{sec:pattern_gadgets_existence},
we need to introduce two different simpler gadgets, known as {\em signal senders} and {\em indicators}, in
Section~\ref{sec:indicators},
and to construct a generalization 
of the latter, which we will call
{\em generalized negative indicators}, in Section~\ref{sec:gen_negative_indicators}.

\subsection{Signal senders and indicators}\label{sec:indicators}

Signal senders were introduced by Burr et.~al~\cite{burr1976graphs}
for the construction of BEL gadgets when $H\cong K_t$ for $t\geq3$ and $q=2$.

\begin{definition}\label{def:signal_senders}\rm
Let $q\geq 2$ and  $d\geq 1$ be given integers, and let $H$ be a graph. A {\em positive signal sender} $S = S^+(H,e,f,q,d)$ for $H$ is a graph that contains two distinguished edges $e,f\in E(S)$, called the {\em signal edges} of $S$, such that the following properties hold:
\begin{enumerate}[label=\itmarab{S}]
    \item\label{def:signal_senders:Hfree} $S\not\rightarrow_q H$.
    \item\label{def:signal_senders:signal} In any $H$-free $q$-coloring of $S$, the edges $e$ and $f$ have the same color.
    \item\label{def:signal_senders:dist} $\dist_S(e,f) \geq d$.
\end{enumerate}
 A {\em negative signal sender} $S = S^-(H,e,f,q,d)$ for $H$ is defined similarly, except that the words ``the same color'' in \ref{def:signal_senders:signal} are replaced by ``different colors.'' 
 
 An \emph{interior} vertex of a signal sender is a vertex that is not incident to either of the signal edges. The \emph{interior} of a signal sender is the set of all interior vertices. 
\end{definition}

Signal senders are known to exist for some important classes of graphs, as given by Theorem~\ref{thm:signal_senders_existence} below. Part \ref{thm:signal_senders_existence:3connected} is due to R\"odl and Siggers~\cite{rodl2008ramseyminimal}, generalizing results of Burr et al.~\cite{burr1976graphs} and Burr et al.~\cite{burr1985useofsenders}, part \ref{thm:signal_senders_existence:cycles} is due to Siggers~\cite{siggers2008cycles}, and part \ref{thm:signal_senders_existence:pendant} follows from a result in the PhD thesis of Grinshpun~\cite[Lemma 2.6.3]{grinshpun2015some} combined with the result of Fox et al.~\cite{fox2014ramsey} concerning $s_2(K_t\cdot K_2)$. 

\begin{thm}\label{thm:signal_senders_existence} \hfill
\begin{enumerate}[label=(\alph*)] 
    \item \label{thm:signal_senders_existence:3connected} For all integers $q\geq 2$ and $d\geq 1$ and every graph $H$ that is 3-connected or isomorphic to $K_3$, there exist positive and negative signal senders in which the distance between the signal edges is at least $d$.
    \item \label{thm:signal_senders_existence:cycles} For all integers $q\geq 2$, $d\geq 1$, and $t\geq 4$, there exist positive and negative signal senders for $C_t$ with girth $t$ and distance at least $d$ between the signal edges.
    \item \label{thm:signal_senders_existence:pendant} 
    For $q=2$ and for all integers $t\geq 3$ and $d\geq 1$, there exist positive and negative signal senders for $K_t\cdot K_2$ in which the distance between the signal edges is at least $d$. Further, a signal sender $S$, positive or negative, with signal edges $e$ and $f$ can be chosen so that $S$ has a $K_t\cdot K_2$-free $2$-coloring in which all edges incident to $e$ (resp.\ $f$) have a different color from $e$ (resp.\ $f$) and none of the vertices of $e$ and $f$ is contained in a monochromatic copy of $K_t$.
\end{enumerate}
\end{thm}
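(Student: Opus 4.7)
The plan is to treat the three parts separately, as they rest on fundamentally different constructions, and then to unify the distance bound by a simple chaining argument at the end.

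For part \ref{thm:signal_senders_existence:3connected}, when $H$ is $3$-connected or $H\cong K_3$, I would follow the R\"odl--Siggers amalgamation strategy. Starting from any $q$-Ramsey graph for $H$ and greedily removing edges yields an edge-critically Ramsey graph $G^\ast$ with some critical edge $e_0$: in any $H$-free $q$-coloring of $G^\ast - e_0$, assigning any color to $e_0$ would complete a monochromatic copy of $H$. Taking $q$ suitably relabeled copies of $G^\ast-e_0$ and identifying their critical edges with two distinguished edges $e,f$ produces a positive signal sender, and a twisted identification that permutes color classes yields a negative sender. The crucial structural input is that $3$-connectedness of $H$ prevents a copy of $H$ from straddling a small vertex cut introduced by amalgamation, so no spurious copies of $H$ arise; the $K_3$ case is handled by a direct ad hoc construction already present in Burr--Erd\H{o}s--Lov\'asz.

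For part \ref{thm:signal_senders_existence:cycles}, with cycles $C_t$ for $t\geq 4$, the amalgamation above breaks down, since $C_t$ is only $2$-connected and two cycles sharing a vertex typically produce shorter cycles via alternative paths. Following Siggers, I would instead use the cycles themselves as the color-constraint mechanism: construct a graph of girth exactly $t$ in which every $C_t$ must contribute a non-monochromatic constraint, and arrange many overlapping copies of $C_t$ so that the combined constraints force the signal edges to agree (positive case) or disagree (negative case). The design must be explicit and combinatorial, carefully controlling both the girth and the topology of the overlaps. For the negative sender, one propagates a parity-style argument along a path of overlapping $C_t$'s, with the argument splitting naturally into sub-cases according to whether $t$ is even or odd.

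For part \ref{thm:signal_senders_existence:pendant}, with $H\cong K_t\cdot K_2$ and $q=2$, I would start from the positive and negative signal senders for $K_t$ provided by part \ref{thm:signal_senders_existence:3connected} and use them as a skeleton. Following Grinshpun, one attaches local ``shield'' structures near $e$ and $f$ which, in any $H$-free $2$-coloring, simultaneously enforce the desired signal relation and ensure that no endpoint of $e$ or $f$ lies in a monochromatic $K_t$; the shields use the identity $s_2(K_t\cdot K_2)=t-1$ from Fox et al.\ to produce, at each signal-edge endpoint, enough edges of each color to block every monochromatic $K_t$ through that vertex. Property (b) from the theorem statement is precisely what prevents a pendant extension from completing a monochromatic $K_t\cdot K_2$ at the signal edges, and it is built in by this design.

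The distance condition $\dist_S(e,f)\geq d$ is a uniform add-on in all three cases: given any positive signal sender, one chains $d$ copies of it end-to-end, treating each as a color-preserving wire, so the resulting sender has distance between signal edges at least $d$. The main obstacle across all three parts is the classical tension between \emph{rigidity} (building enough structure to force a color pattern at the signal edges) and \emph{sparsity} (avoiding unintended copies of $H$). In \ref{thm:signal_senders_existence:3connected} this tension is resolved cleanly by $3$-connectedness; in \ref{thm:signal_senders_existence:cycles} the principal difficulty is preserving girth exactly $t$ through the amalgamations, which rules out many natural constructions; in \ref{thm:signal_senders_existence:pendant} the hardest step is designing the shields so that they transmit the signal while simultaneously blocking every monochromatic $K_t$ that could combine with a pendant edge to form $K_t\cdot K_2$.
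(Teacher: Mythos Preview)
Your proposal is broadly aligned with the paper's treatment, but it is important to note that the paper does \emph{not} prove this theorem from scratch: parts~\ref{thm:signal_senders_existence:3connected}--\ref{thm:signal_senders_existence:pendant} are cited directly from R\"odl--Siggers, Siggers, and Grinshpun (combined with Fox et al.), respectively. The paper's own contribution consists only of two short remarks, one of which---the chaining argument to boost the distance between signal edges---you reproduce exactly.

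Where your proposal diverges is in part~\ref{thm:signal_senders_existence:pendant}. The statement there is a genuine strengthening of Grinshpun's result: Grinshpun constructs only a negative signal sender, and his special coloring guarantees the ``all incident edges have the opposite color'' property at just \emph{one} of the two signal edges, say $f$. Your description of attaching ``shield'' structures independently at both $e$ and $f$ is not what the paper does and is not obviously sufficient. Instead, the paper gives a concrete gluing trick: to obtain a positive sender with the special property at both signal edges, take two copies of Grinshpun's sender $S'$ and identify their ``bad'' edges $e$; the resulting signal edges are the two copies of $f$, each inheriting the full special property. A negative sender with both properties is then obtained by chaining this positive sender with one more copy of $S'$. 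This is the one substantive step in the paper's treatment of Theorem~\ref{thm:signal_senders_existence}, and your proposal does not supply it.

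Your sketches of the underlying constructions in \ref{thm:signal_senders_existence:3connected} and \ref{thm:signal_senders_existence:cycles} are plausible in spirit but go well beyond anything the paper writes down; since the paper simply cites, there is nothing to compare against there.
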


Before we continue, we make a few remarks about Theorem~\ref{thm:signal_senders_existence}. 
First, in~\cite{grinshpun2015some}, Grinshpun does not explicitly prove that signal senders exist for $K_3\cdot K_2$; however, his proof easily extends to this case. Further, part~\ref{thm:signal_senders_existence:pendant} is actually a slight strengthening of Grinshpun's result: His result is stated only in terms of negative signal senders and provides a special coloring in which neither signal edge is incident to a monochromatic copy of $K_t$ but only one of the signal edges, say $f$, is required to have a color different from all edges incident to it. We can derive the version stated above easily. Let $S'$ be the signal sender constructed by Grinshpun. To construct a positive signal sender $S^+$ as in Theorem~\ref{thm:signal_senders_existence:pendant}, take two copies of $S'$ and identify the two copies of $e$; similarly, to construct a negative signal sender as in Theorem~\ref{thm:signal_senders_existence:pendant}, take a copy of $S^+$ and a copy of $S'$ and identify the edge $e$ with one of the signal edges of $S^+$.
As a final remark, in the original manuscripts where~\ref{thm:signal_senders_existence:cycles} and~\ref{thm:signal_senders_existence:pendant} appear, it is not shown explicitly that the distance between the signal edges can be arbitrarily large. However, it is easy to see that this is indeed the case. Both constructions do guarantee that the signal edges are not incident to each other, which means that we can increase the distance between the signal edges by stringing several signal senders together (that is, taking signal senders $S_1, \dots, S_r$ and, for each $i\in \set{2,\dots, r-1}$, identifying one signal edge of $S_i$ with a signal edge of $S_{i-1}$ and the other with a signal edge of $S_{i+1}$; if we take $S_1,\dots, S_{r-1}$ to be positive signal senders, then the resulting signal sender is of the same type (positive or negative) as $S_r$).

\medskip

Indicators were introduced by Burr et al.\ in \cite{burr1977ramseyminimal} for two colors and generalized by Clemens et al.\ in \cite{clemens2018minimal} to multiple colors. Together with signal senders, these graphs will serve as basic building blocks for our pattern gadgets. For our construction, we need to modify slightly the definition appearing in \cite{clemens2018minimal}, as given below. In addition, we will need both positive and negative indicators. 

\begin{definition}\label{def:indicators}\rm
Let $q\geq 2$ and $d\geq 1$, and let $H$ and $F$ be graphs such that $H\not\subseteq F$. A \textit{positive indicator} $I = I^+ (H,F,e,q,d)$ for $H$ is a graph such that the following properties hold:
\begin{enumerate}[label=\itmarab{I}]
    \item\label{def:indicator:subgraph} $F\subseteq_{ind} I$ and $e\in E(I)$ with $\dist_I (F,e) \geq d$.
    \item \label{def:indicator:Hfree} There exists an $H$-free $q$-coloring of $I$ in which $F$ is monochromatic.
    \item\label{def:indicator:same} For every $H$-free $q$-coloring $c$ of $I$ in which $F$ is monochromatic, we have $c(e) = c(F)$.
    \item\label{def:indicator:non-constant}
    For any non-constant coloring $\varphi_F:E(F)\rightarrow [q]$ and $k\in [q]$, there exists an $H$-free coloring $c:E(I)\rightarrow [q]$ such that $c_{|F} = \varphi_F$ and $c(e) = k$.
\end{enumerate}
If $I$ is a positive indicator with parameters $H,F,e,q,$ and $d$, we call $I$ a \emph{positive $(H,F,e,q,d)$-indicator}. In this case, we call $F$ the {{\em indicator subgraph}} and $e$ the {{\em indicator edge}} of $I$.

A {\em negative indicator} $I = I^-(H,F,e,q,d)$ is the same except that in property~\ref{def:indicator:same} we replace ``$c(e) = c(F)$'' with ``$c(e) \neq c(F)$.''

 An \emph{interior} vertex of an indicator is a vertex that belongs to neither the indicator subgraph nor the indicator edge. The \emph{interior} of an indicator is the set of all interior vertices. 
\end{definition}

The construction of indicators for the case when $H$ is 3-connected or isomorphic to $K_3$ was given in~\cite{burr1977ramseyminimal} for two colors and in~\cite{clemens2018minimal} for more than two colors,
where~\ref{def:indicator:non-constant} is replaced with a similar yet slightly weaker property. Essentially the same constructions work for
3-connected graphs as well as cycles and cliques with a pendant edge
with this new property~\ref{def:indicator:non-constant}. 
In our constructions, however, we need to ensure that when we put together several gadgets and later on color each of them avoiding a monochromatic copy of our target graph $H$, there is still no monochromatic $H$ in the resulting graph. 
We do not want to accidentally create monochromatic copies that use vertices from several different pieces of our construction. While we can get this almost immediately for 3-connected graphs, in the latter two cases we need to maintain some extra properties. 
 Despite these additional technicalities and the slight modification in our definition of indicators, our proofs that the constructions given in~\cite{burr1977ramseyminimal} and~\cite{clemens2018minimal} indeed give the required positive indicators are very similar to the proofs presented in the original papers. This is why we choose to omit the proof of Theorem~\ref{thm:indicators_existence} here; for the convenience of the reader we include it in the appendix.

\begin{theorem}\label{thm:indicators_existence}
Let $q\geq 2$ and $d\geq 1$ be integers, $H$ be a graph, and $F$ be a graph with $e(F) \geq 2$.
\begin{enumerate}[label=(\alph*)]
    \item \label{thm:indicators_existence:3connected} If $H$ is $3$-connected or {$H\cong K_3$}, then a positive indicator $I=I^+(H,F,e,q,d)$ exists.
    \item \label{thm:indicators_existence:cycles} If $H\cong  C_t$ for $t\geq 4$ and $\text{girth}(F)> t$, then a positive indicator $I=I^+(H,F,e,q,d)$ with girth $t$ exists.
    \item \label{thm:indicators_existence:pendant} If $H \cong  K_t\cdot K_2$ for $t\geq 3$ and $q=2$, then there exists a positive indicator $I=I^+(H,F,e,d,q)$ with the following additional property: Both the $H$-free $2$-colorings
    in~\ref{def:indicator:Hfree} and~\ref{def:indicator:non-constant} can be chosen so that
    none of the vertices of $F$ and $e$ is a vertex of a monochromatic copy of $K_t$ and  all edges incident to $e$ have a different color from $e$.
\end{enumerate}
 Further, in parts~\ref{thm:indicators_existence:3connected} and~\ref{thm:indicators_existence:cycles} the indicators can be taken so that $(I,F)$ is $H$-robust and in part~\ref{thm:indicators_existence:pendant} we can ensure that $(I,F)$ is $K_t$-robust.
\end{theorem}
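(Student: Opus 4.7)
The approach is to follow the indicator constructions of \cite{burr1977ramseyminimal} (for $q=2$) and \cite{clemens2018minimal} (for $q \geq 2$), which already produce graphs satisfying \ref{def:indicator:subgraph}, \ref{def:indicator:Hfree}, \ref{def:indicator:same}, and a weaker version of \ref{def:indicator:non-constant}, and to verify that the strengthened \ref{def:indicator:non-constant} holds for (essentially) the same constructions. These constructions compose the positive and negative signal senders of Theorem~\ref{thm:signal_senders_existence} into gadgets that propagate color from $F$ to the indicator edge $e$ whenever $F$ is monochromatic, while leaving $e$ free otherwise. The basic scheme is to place $e$ at distance $\geq d$ from a copy of $F$ and, for each color $k \in [q]$, attach a ``monochromatic-$k$ forbidder'' gadget---built from a copy of $H$ with selected edges identified with edges of $F$ and with auxiliary edges connected to $e$ via signal senders---that makes it impossible to color all edges of $F$ with $k$ while giving $e$ a different color. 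In the simplest incarnation for $q=2$ and $H\cong K_3$, the forbidder is just a triangle on $x_1, x_2, a$ with a negative signal sender linking $a$ and $e$; for larger $H$, positive signal senders are used to force the remaining edges of the copy of $H$ to match a chosen edge of $F$, and for $q \geq 3$ one copy of the forbidder is attached per color.

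The three parts of the theorem correspond to different choices of signal senders and additional constraints. For part~\ref{thm:indicators_existence:3connected} we use Theorem~\ref{thm:signal_senders_existence}\ref{thm:signal_senders_existence:3connected}, and $H$-robustness of $(I,F)$ is inherited from that of the constituent signal senders, together with the fact that all interior vertices of the gadget lie in the interiors of these signal senders. For part~\ref{thm:indicators_existence:cycles} we use the girth-$t$ signal senders of Theorem~\ref{thm:signal_senders_existence}\ref{thm:signal_senders_existence:cycles}, gluing everything at sufficient distances so that $I$ retains girth exactly $t$; the hypothesis $\mathrm{girth}(F) > t$ prevents short cycles from arising across the interface between $F$ and the gadget. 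For part~\ref{thm:indicators_existence:pendant} we use the special signal senders of Theorem~\ref{thm:signal_senders_existence}\ref{thm:signal_senders_existence:pendant}: their ``nice'' $H$-free $2$-colorings, which keep signal edges out of monochromatic $K_t$'s and color them differently from their neighbors, assemble through the gadget to yield indicator colorings with the additional property stated in~\ref{thm:indicators_existence:pendant}, and $K_t$-robustness of $(I, F)$ transfers from the signal senders.

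The main technical obstacle is verifying the strengthened \ref{def:indicator:non-constant}: for \emph{every} non-constant coloring $\varphi_F$ of $F$ and \emph{every} target color $k \in [q]$ for $e$, one must exhibit an $H$-free coloring of $I$ extending $\varphi_F$ with $c(e) = k$. The weaker version used in \cite{clemens2018minimal} only guarantees certain such extensions. To obtain the stronger version, one fixes $\varphi_F$ and $k$ and assembles consistent $H$-free colorings on each component: since $\varphi_F$ uses at least two colors, the color-forcing chains emanating from $F$ carry more than one color into every forbidder gadget, which leaves enough freedom to set $c(e) = k$ without making any monochromatic copy of $H$ appear in any constituent signal sender, any copy of $H$, or across components---the last possibility being ruled out by the large distances prescribed in Theorem~\ref{thm:signal_senders_existence}. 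Since the verification closely parallels the original arguments in \cite{burr1977ramseyminimal, clemens2018minimal}, with only this additional freedom to track, the details are deferred to the appendix.
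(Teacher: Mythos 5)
Your high-level strategy---reuse the indicator constructions of Burr et al.\ and Clemens et al.\ together with the appropriate signal senders, and then verify that the strengthened~\ref{def:indicator:non-constant} holds---coincides with the paper's plan. However, your description of the construction is not accurate, and you have omitted the two technical devices that actually make the argument go through.

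First, the constructions you cite proceed by \emph{induction on $e(F)$}. The base case $e(F)=2$ does roughly what you describe, except that no edges of $F$ are identified with edges of a copy of $H$: the edges of $F$ and a far-away copy (or, for $q>2$, several copies) of $H$ are linked by signal senders, since identification would destroy~\ref{def:indicator:subgraph} for large $d$. For $e(F)=\ell+1$ one builds an indicator $I'$ for $F'=F-f$ with indicator edge $e'$ and then composes it with a two-edge indicator $I''$ for $\{f,e'\}$ whose indicator edge is $e$. Your ``one forbidder per color'' gadget is a direct construction that says nothing about $F$ with more than two edges; this gap is essential. Second, $H$-robustness of $(I,F)$ is \emph{not} simply inherited from the robustness of the constituent signal senders. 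In the inductive step one must rule out copies of $H$ running from an interior vertex of $I'$ to a vertex of $S\cup V(F)$, and this requires an auxiliary structural invariant maintained throughout the induction (the paper's property $\mathcal{T}$): a decomposition $\{T_f : f\in E(F)\}$ of $I$ into subgraphs with $V(T_f)\cap V(F)=f$ such that any vertex lying in two different $T_f$'s is either in $F$ or at distance $\geq d$ from $F$. This is what lets one argue that removing the endpoints of a single edge $g\in E(F')$ disconnects any potential crossing copy of $H$; without it, robustness does not follow from the signal senders alone. (The verification of~\ref{def:indicator:non-constant} also needs a case split, in the inductive step, on whether the removed edge $f$ is or is not one of the two differently-colored edges witnessing that $\varphi_F$ is non-constant; ``enough freedom'' glosses over this.)
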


The existence of negative indicators in all of the above cases now follows immediately and is given in the following corollary.

Here and in the next sections, we will often say that we {\em join} or {\em connect} two edges $e_1, e_2$ of a given graph by a signal sender. What we mean by that is that we create a vertex-disjoint copy of a signal sender $S$ and identify its signal edges with $e_1$ and $e_2$, that is, the signal sender does not share any vertices or edges with the original graph except for the (vertices of the) signal edges. Similarly, joining or connecting a subgraph $F$ and an edge $e$ by an indicator will mean that we create a vertex-disjoint copy of the indicator and identify the indicator subgraph with $F$ and the indicator edge with $e$. We will also use the same terminology in the context of generalized negative indicators, defined later in this section. 

\begin{cor}\label{cor:neg_indicators_existence}
Let $q\geq 2$ and $d\geq 1$ be integers, $H$ be a graph, and $F$ be a graph with $e(F) \geq 2$.
\begin{enumerate}[label=(\alph*)]
    \item \label{cor:neg_indicators_existence:3connected} If $H$ is $3$-connected or {$H\cong K_3$}, then a negative indicator $I=I^-(H,F,e,q,d)$ exists.
    \item \label{cor:neg_indicators_existence:cycles} If $H\cong  C_t$ for $t\geq 4$ and $\text{girth}(F)> t$, then a negative indicator $I=I^-(H,F,e,q,d)$ with girth $t$ exists.
    \item \label{cor:neg_indicators_existence:pendant} If $H \cong  K_t\cdot K_2$ for $t\geq 3$ and $q=2$, then there exists a negative indicator $I=I^-(H,F,e,q,d)$ with the following additional property: Both the $H$-free $2$-colorings
    in~\ref{def:indicator:Hfree} and~\ref{def:indicator:non-constant} can be chosen so that
    none of the vertices of $F$ and $e$ is a vertex of a copy of monochromatic copy of $K_t$ and  all edges incident to $e$ have a different color from $e$.
\end{enumerate}
 Further, in parts~\ref{cor:neg_indicators_existence:3connected} and~\ref{cor:neg_indicators_existence:cycles}, the indicators can be taken so that $(I,F)$ is $H$-robust and in part~\ref{cor:neg_indicators_existence:pendant}, we can ensure that $(I,F)$ is $K_t$-robust.
\end{cor}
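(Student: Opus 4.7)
The plan is to obtain each negative indicator as the concatenation of a positive indicator with a negative signal sender along a common edge. More precisely, to construct $I^-(H,F,e,q,d)$, I start from a positive indicator $I^+ = I^+(H,F,e',q,d_1)$ (given by Theorem~\ref{thm:indicators_existence}) and a negative signal sender $S^- = S^-(H,e',e,q,d_2)$ (given by Theorem~\ref{thm:signal_senders_existence}), and form $I$ by identifying one signal edge of $S^-$ with the indicator edge $e'$ of $I^+$, so that the two gadgets share exactly this edge and its two endpoints. The new indicator edge of $I$ is the other signal edge of $S^-$. Choosing $d_1$ and $d_2$ large enough relative to the target $d$ secures property~\ref{def:indicator:subgraph}.

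For~\ref{def:indicator:Hfree}, take an $H$-free $q$-coloring of $I^+$ in which $F$ is monochromatic in some color $k$, so in particular $c(e')=k$; an $H$-free coloring of $S^-$ must assign $e'$ and $e$ distinct colors, and by permuting the color palette we can arrange that this coloring of $S^-$ also assigns color $k$ to $e'$ and some $k'\neq k$ to $e$. Gluing the two colorings along $e'$ gives a $q$-coloring of $I$, and the robustness built into Theorems~\ref{thm:signal_senders_existence} and~\ref{thm:indicators_existence} ensures that no monochromatic copy of $H$ can straddle $I^+$ and $S^-$, so the combined coloring is $H$-free. Property~\ref{def:indicator:same} is then immediate: restricting any $H$-free $q$-coloring of $I$ to $I^+$ gives $c(e')=c(F)$, and restricting to $S^-$ gives $c(e)\neq c(e')$, so $c(e)\neq c(F)$.

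For~\ref{def:indicator:non-constant}, given a non-constant coloring $\varphi_F$ of $F$ and a target color $k\in[q]$, pick any $k'\in[q]\setminus\{k\}$; property~\ref{def:indicator:non-constant} for $I^+$ furnishes an $H$-free extension of $\varphi_F$ to $I^+$ with $c(e')=k'$, and a permutation of the colors in an $H$-free coloring of $S^-$ then extends it further to $I$ with $c(e)=k$. Robustness of $(I,F)$ (respectively $K_t$-robustness in part~\ref{cor:neg_indicators_existence:pendant}) follows from the robustness of $(I^+,F)$ and of the signal sender, since any copy of $H$ in a graph obtained by adding structure near $F$ is either confined to $I^+$ (and hence handled by robustness of the positive indicator) or is separated from $F$ by a distance at least that of the interior of $S^-$.

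The additional properties required in parts~\ref{cor:neg_indicators_existence:cycles} and~\ref{cor:neg_indicators_existence:pendant} come for free from the corresponding properties of the ingredient gadgets: in part~\ref{cor:neg_indicators_existence:cycles}, both the positive indicator and the signal sender can be chosen of girth exactly $t$ by Theorems~\ref{thm:indicators_existence}\ref{thm:indicators_existence:cycles} and~\ref{thm:signal_senders_existence}\ref{thm:signal_senders_existence:cycles}, and identifying one edge cannot create shorter cycles; in part~\ref{cor:neg_indicators_existence:pendant}, the colorings of $I^+$ and $S^-$ guaranteed by Theorems~\ref{thm:indicators_existence}\ref{thm:indicators_existence:pendant} and~\ref{thm:signal_senders_existence}\ref{thm:signal_senders_existence:pendant} both place no monochromatic $K_t$ at any vertex of $e'$ and force all edges incident to $e'$ (respectively $e$) to differ in color from it, so the glued colorings inherit these properties at $F$ and $e$. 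The only step where some care is genuinely needed is the non-straddling argument for monochromatic $H$ (or $K_t$ in the pendant case) across the shared edge $e'$, and this is exactly what the robustness statements of the underlying gadgets were designed to provide.
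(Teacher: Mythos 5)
Your proposal is correct and follows exactly the paper's own construction: attach a negative signal sender to the indicator edge $e'$ of a positive $(H,F,e',q,d)$-indicator, and take the other signal edge as the new indicator edge $e$; you simply spell out the verifications that the paper declares ``easy to verify.'' One minor inaccuracy: you appeal to ``robustness built into Theorem~\ref{thm:signal_senders_existence},'' but that theorem states no robustness property for signal senders per se — the non-straddling argument across the interior of $S^-$ actually comes from the distance bound $\dist_{S^-}(e',e)\geq d>v(H)$ together with $3$-connectivity (or the girth/$2$-connectivity argument for cycles), exactly as in Observations~\ref{obs:gni_subgraph1} and~\ref{obs:indicator_subgraph1}; the genuine robustness hypothesis you need is only $(I^+,F)$ being $H$-robust (resp.\ $K_t$-robust) from Theorem~\ref{thm:indicators_existence}.
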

\begin{proof}
Having already established the existence of positive indicators in all the above cases, we can now construct negative indicators easily as follows:
\begin{enumerate}
    \item [(i)]  Let $I' = I^+(H,F,e',q,d)$ be a positive indicator satisfying all required additional properties.
    \item [(ii)] Let $e$ be an edge disjoint from $I'$.
    \item [(iii)] Connect $e$ and $e'$ by a negative signal sender for $H$.
\end{enumerate}

Now, properties~\ref{def:indicator:subgraph}--\ref{def:indicator:non-constant} as well as the robustness property and the additional properties required in parts~\ref{cor:neg_indicators_existence:cycles} and~\ref{cor:neg_indicators_existence:pendant} are all easy to verify.
\end{proof}

In our later proofs we will refer to all the $2$-colorings mentioned 
in Theorem~\ref{thm:signal_senders_existence}\ref{thm:signal_senders_existence:pendant},
Theorem~\ref{thm:indicators_existence}\ref{thm:indicators_existence:pendant}, and
Corollary~\ref{cor:neg_indicators_existence}\ref{cor:neg_indicators_existence:pendant} as {\em $K_t\cdot K_2$-special $2$-colorings}.

\subsection{Generalized negative indicators}\label{sec:gen_negative_indicators}
Before we can prove the existence of pattern gadgets
as stated in Theorem~\ref{thm:pattern_gadgets_existence},
we will first need to construct slightly weaker gadget graphs,
which we call generalized negative indicators.

Recall that a negative indicator $I=I^-(H,F,e,q,d)$
comes with an indicator subgraph $F$ and an indicator edge $e$ and has the following property: In any $H$-free $q$-coloring of $I$
that colors $F$ monochromatically, $e$ needs to get a color different
from that of $F$; but once $F$ is not monochromatic,
we can extend the $q$-coloring to an $H$-free $q$-coloring of $I$, independently
of which color is chosen for $e$. That is, in short, when $F$ is monochromatic we get some information on the color given to $e$,
while otherwise we do not.

The gadgets $I^\ast$ described in the following will generalize this concept
by replacing $e$ with another graph $G$. 
Now, whenever the indicator subgraph $F$ is monochromatic in an $H$-free $q$-coloring of $I^\ast$, we again want to get some information on the coloring given to $G$, namely that
a certain color pattern is forced on $G$.
Otherwise, when $F$ is not monochromatic, we do not get any information
on $G$ in the sense that we can still color this subgraph 
by any $H$-free $q$-coloring and then find an $H$-free extension to~$I^\ast$. We give a precise definition below. 

\begin{definition}\label{def:gen_negative_indicators}\rm
Let $q\geq 2$ and $d\geq 1$ be integers, and let $H, F,$ and $G$ be graphs with
$H\not\subseteq F$.
Further, let  
$G=G_1\cup G_2\cup \ldots \cup G_{q-1}$ be a partition
with $H\not\subseteq G_k$ for every $k\in [q-1]$.
We call a graph
$I^{\ast} = I^{\ast}(H,F,\{G_k\}_{k\in [q-1]},q,d)$
a \textit{generalized negative indicator} 
if the following properties hold:
\begin{enumerate}[label=\itmarab{GI}]
\item\label{def:gni:induced}
 $F,G\subseteq_{ind} I^{\ast}$ and $\dist_{I^{\ast}}(F,G)\geq d$.
\item\label{def:gni:Hfree} There exists an $H$-free $q$-coloring of $I^{\ast}$ such that $F$ is monochromatic.
\item\label{def:gni:Fmono} In any $H$-free coloring $c:E(I^\ast)\rightarrow [q]$ in which $F$ is monochromatic, 
each of the graphs $G_i$ needs to be monochromatic so that
$\{c(F),c(G_1),\ldots,c(G_{q-1})\}=[q]$.
\item\label{def:gni:Fnotmono} Let $\varphi_F:E(F)\rightarrow [q]$ be any non-constant coloring, and let $\varphi_G : E(G) \rightarrow [q]$
be any $H$-free coloring. 
Then there exists an $H$-free coloring $c:E(I^{\ast})\rightarrow [q]$
such that $c_{|F} = \varphi_F$ and $c_{|G} = \varphi_G$.
\end{enumerate}
If $I^\ast$ is a generalized negative indicator with parameters $H,F,\{G_k\}_{k\in [q-1]},q,$ and $d$, we call $I^\ast$ a \emph{generalized negative $(H,F,\{G_k\}_{k\in [q-1]},q,d)$-indicator}. In this case, 
we call $F$ and $G$ the \emph{indicator subgraphs} of $I^\ast$.

 An \emph{interior} vertex of a generalized negative indicator is a vertex that belongs to neither of the indicator subgraphs. The \emph{interior} of a generalized negative indicator is the set of all interior vertices. 
\end{definition}

The following lemma states that, if $H$ is 3-connected or isomorphic to a cycle or $K_t\cdot K_2$,
then generalized negative indicators that satisfy
additional robustness properties exist for $H$.

\begin{lemma}\label{lem:gen_negative_indicators_existence} 
Let $q\geq 2$ and $d\geq 1$ be integers, and $H, F$, and $G$ be graphs with $H\not\subseteq F$. Further, let $G=G_1\cup \ldots \cup G_{q-1}$ be a partition such that $H\not\subseteq G_k$ for every $k\in [q-1]$.
\begin{enumerate}[label = (\alph*)]
    \item\label{lem:gen_negative_indicators_existence:3connected}  If $H$ is $3$-connected or $H\cong K_3$, then a generalized negative indicator \\$I^{\ast} = I^{\ast}(H,F,\{G_k\}_{k\in [q-1]},q,d)$ exists.
    \item\label{lem:gen_negative_indicators_existence:cycles}  If $H \cong C_t$ for $t\geq 4$ and $\text{girth}(F)>t$, then a generalized negative indicator $I^{\ast} = I^{\ast}(H,F,\{G_k\}_{k\in [q-1]},q,d)$ exists.
    \item\label{lem:gen_negative_indicators_existence:pendant}  If $H\cong K_t\cdot K_2$ for $t\geq 3$ and $q=2$, then a generalized negative indicator $I^{\ast} = I^{\ast}(H,F,\{G_k\}_{k\in [q-1]},q,d)$ with the following additional property exists: Both the $H$-free $2$-colorings
    in~\ref{def:gni:Hfree} and~\ref{def:gni:Fnotmono} can be chosen so that every monochromatic copy of $K_t$ using a vertex from $F\cup G$ is contained fully in $F\cup G$.
\end{enumerate}
Further, in parts~\ref{lem:gen_negative_indicators_existence:3connected} and~\ref{lem:gen_negative_indicators_existence:cycles}, the generalized negative indicator can be taken so that $(I^{\ast},F)$ and $(I^{\ast},G)$ are $H$-robust. 
In part~\ref{lem:gen_negative_indicators_existence:pendant}, we can ensure that $(I^{\ast},F)$ and $(I^{\ast},G)$ are $K_t$-robust. 
\end{lemma}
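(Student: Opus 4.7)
My plan is to construct $I^*$ by attaching gadgets from Section~\ref{sec:indicators}---negative indicators (Corollary~\ref{cor:neg_indicators_existence}) and signal senders (Theorem~\ref{thm:signal_senders_existence})---to the union $F\cup G$, treating the case $q=2$ separately. For $q=2$, which already handles all of part~\ref{lem:gen_negative_indicators_existence:pendant}, place $F$ and $G=G_1$ as vertex-disjoint induced subgraphs and attach a negative indicator $I^-(H,F,e,2,d)$ to each edge $e\in G$. Property~\ref{def:indicator:same} forces $c(e)\neq c_F$ whenever $F$ is monochromatic in color $c_F$, and since $q=2$ this compels $G$ to be monochromatic in the other color, yielding~\ref{def:gni:Fmono}; property~\ref{def:indicator:non-constant} of the indicators immediately yields~\ref{def:gni:Fnotmono}. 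The remaining properties as well as the $H$- or $K_t$-robustness are inherited from the indicators. For part~\ref{lem:gen_negative_indicators_existence:pendant}, choosing $K_t\cdot K_2$-special colorings guaranteed by Corollary~\ref{cor:neg_indicators_existence}\ref{cor:neg_indicators_existence:pendant} gives the extra condition on monochromatic $K_t$ copies.

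For $q\geq 3$ (needed for parts~\ref{lem:gen_negative_indicators_existence:3connected} and~\ref{lem:gen_negative_indicators_existence:cycles}), the key observation is a pigeonhole reduction. Suppose an $H$-free $q$-coloring has $F$ monochromatic in a color $c_F$ and satisfies both (i) $c(e)\neq c_F$ for every $e\in G$, and (ii) $c(e)\neq c(e')$ for every $e\in G_k$, $e'\in G_{k'}$ with $k\neq k'$. Setting $C_k=\{c(e):e\in G_k\}$, condition~(i) gives $C_k\subseteq [q]\setminus\{c_F\}$ while (ii) gives that the $C_k$ are pairwise disjoint; since $|[q]\setminus\{c_F\}|=q-1$, each non-empty $C_k$ has size exactly $1$ and $\{c_F\}\cup\bigcup_k C_k=[q]$. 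This is exactly what property~\ref{def:gni:Fmono} demands. Condition~(i) is enforced uniformly by attaching, for each $e\in G$, a negative indicator $I^-(H,F,e,q,d)$.

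The main obstacle is enforcing condition~(ii), which must hold only when $F$ is monochromatic, as an unconditional negative signal sender would spoil property~\ref{def:gni:Fnotmono}. To this end, I would construct a \emph{conditional negative sender} $CN(H,F,e,e',q,d)$ containing $F$, $e$, $e'$ as induced subgraphs, such that $c(e)\neq c(e')$ holds in every $H$-free coloring with $F$ monochromatic, while any $H$-free coloring of $F\cup\{e,e'\}$ with $F$ non-constant, or with $F$ monochromatic and $c(e)\neq c(e')$, extends to an $H$-free coloring of the whole gadget. My approach to constructing $CN$ is to begin with a standard negative signal sender $S^-(H,e,e',q,d)$ and attach selected auxiliary edges to $F$ via positive indicators, so that property~\ref{def:indicator:non-constant} introduces exactly the extra coloring flexibility needed to bypass the sender's constraint when $F$ is non-constant, while leaving the constraint intact when $F$ is monochromatic. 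With $CN$ at hand, the final $I^*$ is obtained from $F\cup G$ by attaching a negative indicator to each $e\in G$ and a copy of $CN(H,F,e,e',q,d)$ to each pair of edges from distinct parts. Properties~\ref{def:gni:induced}--\ref{def:gni:Fnotmono} then follow from the pigeonhole reduction and the defining properties of the building blocks (with $d$ taken large enough throughout), and robustness of $(I^*,F)$ and $(I^*,G)$ follows from the robustness of each attached gadget, since any copy of $H$ (or $K_t$) in an augmentation of $I^*$ must lie within a single attached gadget or the original graph.
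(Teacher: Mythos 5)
Your plan has a genuine gap in the case $q\geq 3$, which is the heart of the lemma. The reduction to a ``conditional negative sender'' $CN(H,F,e,e',q,d)$ is the right conceptual target, but your proposed construction of $CN$ cannot work as sketched. You suggest starting from a negative signal sender $S^-(H,e,e',q,d)$, which forces $c(e)\neq c(e')$ in \emph{every} $H$-free $q$-coloring, and then attaching auxiliary edges to $F$ via positive indicators so that property~\ref{def:indicator:non-constant} ``bypasses the sender's constraint when $F$ is non-constant.'' But attaching additional gadgets to a graph only restricts the family of $H$-free colorings; it can never enlarge it. Once $S^-(H,e,e',q,d)$ is present as a subgraph, no $H$-free coloring with $c(e)=c(e')$ exists, regardless of what you glue on. So the desired ``conditionality'' cannot be realized by augmenting a negative signal sender — the sender itself must be replaced by something structurally different. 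In short, the step where property~\ref{def:gni:Fnotmono} is established fails: you cannot extend a coloring that makes $F$ non-constant but assigns the same color to an edge of $G_k$ and an edge of $G_{k'}$.

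The paper sidesteps precisely this obstacle by never putting any unconditional inter-class negative constraint on edges of $G$. Instead, for each $k\in[q-1]$ it introduces an auxiliary matching $M_k$ of size $q$, joins $F$ to every edge of $M_k$ by a negative indicator, and detects whether $F$ is monochromatic via the \emph{pigeonhole} on $M_k$: if $F$ is monochromatic, the $q$ edges of $M_k$ must avoid the color of $F$, so some pair $S\subseteq M_k$ is monochromatic, and positive $(H,S,p,q,d)$-indicators then propagate that color to a small matching $P_k$, which in turn forces $G_k$. The negative signal senders live only between distinguished edges $e_{k_1}\in P_{k_1}$ and $e_{k_2}\in P_{k_2}$, not between edges of $G$. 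When $F$ is not monochromatic, $M_k$ can be rainbow-colored; then no $S\subseteq M_k$ is monochromatic, so property~\ref{def:indicator:non-constant} of the $(H,S,p,q,d)$-indicators gives complete freedom over $P_k$, which is then non-constant, and $G_k$ can be colored arbitrarily. This is the missing mechanism. Your pigeonhole count in the reduction (that conditions (i) and (ii) force the partition $\{c_F\}\cup\bigcup_k C_k = [q]$) is correct, but it also implicitly assumes each $G_k$ is nonempty; the paper's construction avoids this subtlety since the forcing goes via $P_k$ rather than via pairwise comparisons between edges of different parts.

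Your $q=2$ shortcut (attach $I^-(H,F,e,2,d)$ directly to each $e\in E(G)$) does give~\ref{def:gni:Fmono} and~\ref{def:gni:Fnotmono} at the level of forcing, but be aware that the paper does \emph{not} take this route even for $q=2$: it uses the matching $M_1$ and $P_1$ uniformly. One reason is that attaching many negative indicators directly to edges of $G$ makes the robustness of $(I^*,G)$ harder to verify cleanly, since the indicator edges sit inside $G$ and adjacent edges of $G$ carry indicators whose interiors both touch shared vertices of $G$; Theorem~\ref{thm:indicators_existence} only guarantees robustness of $(I,F)$, i.e.\ on the indicator-subgraph side, not on the indicator-edge side. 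The paper's intermediary matchings $M_k$, $P_k$ isolate the indicator edges away from both $F$ and $G$, so that each gadget touches $G$ only at a single edge via a positive indicator whose indicator subgraph $P_k$ is disjoint from $G$, which makes the copy-location argument (Observation~\ref{obs:gni_subgraph1}) and the robustness claims go through uniformly for all $q$.
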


\begin{center}
\begin{figure}[b] 
	\begin{center}
	\includegraphics[scale=0.9]{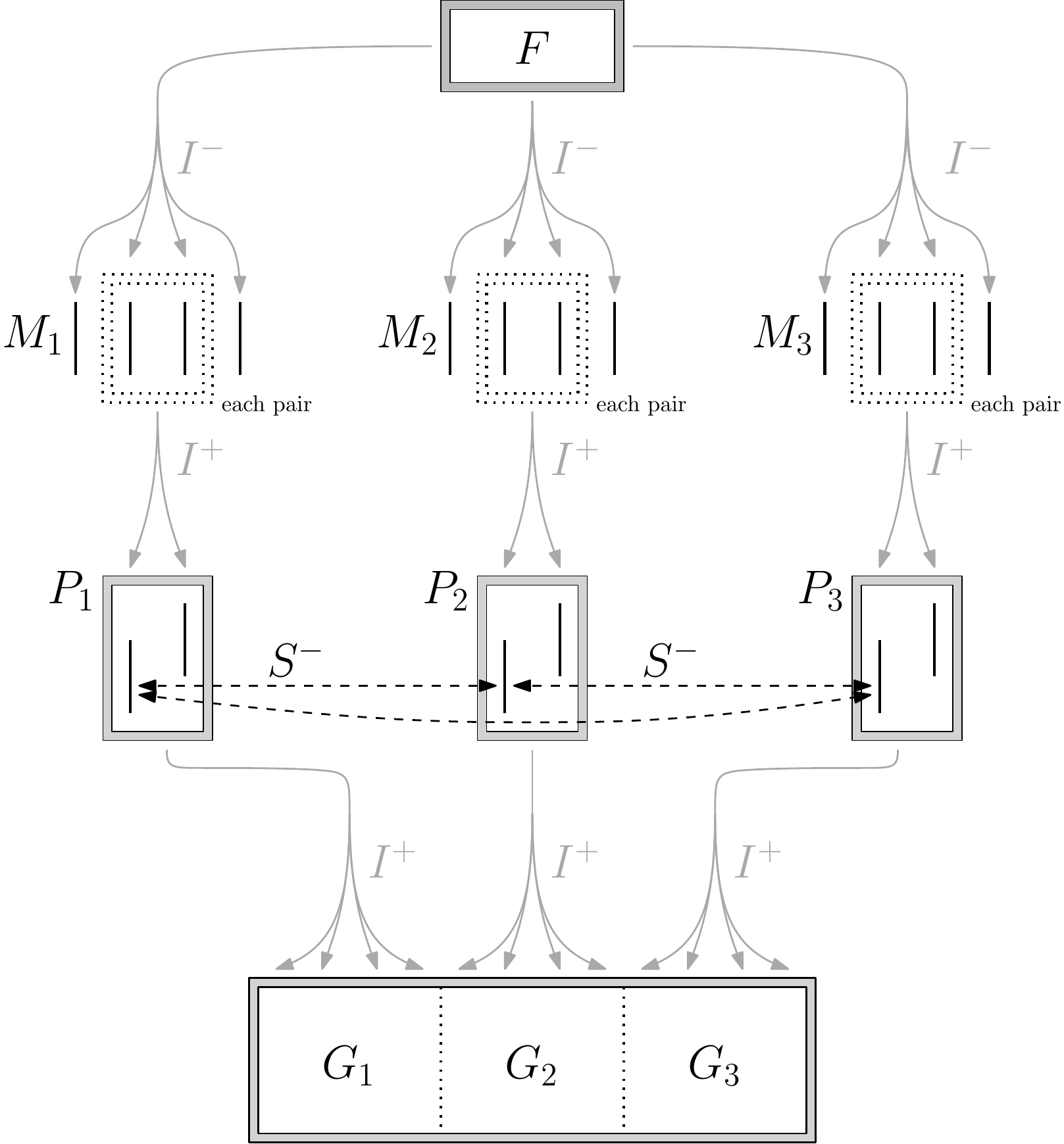}
	\end{center}
	\caption{Generalized negative indicator for $q=4$.}
	\label{fig:gnindicator}
\end{figure}
\end{center}

\begin{proof}
Let $q,d,H,F,$ and $G$ be as given, and without loss of generality assume $d\geq v(H)+1$.
Let $M_1,\ldots,M_{q-1}$ be matchings of size $q$,
let $P_1,\ldots,P_{q-1}$ be matchings of size two,
and let $e_k$ be a fixed edge of $P_k$ for each $k\in[q-1]$. 

In order to construct $I^{\ast}$, we take the vertex-disjoint union of $F$, $G$ and all of the above matchings 
and we join them with signal senders and indicators in the following way:

\begin{enumerate}
\item[(i)] For every $k\in [q-1]$ and every edge $m\in M_k$, 
	join $F$ and $m$
	by a negative $(H,F,m,q,d)$-indicator. 
\item[(ii)] For every $k\in [q-1]$, every submatching $S\subseteq M_k$ 
	of size two, and every edge $p\in P_k$,
	join $S$ and $p$ by a positive
	$(H,S,p,q,d)$-indicator.  
\item[(iii)] For every $1\leq k_1 < k_2 \leq q-1$, 
	join the distinguished edges $e_{k_1}\in P_{k_1}$ 
	and $e_{k_2}\in P_{k_2}$ 
	by a negative signal sender $S^- = S^{-}(H,e_{k_1},e_{k_2},q,d)$.  
\item[(iv)] For every $k\in [q-1]$ and every edge $g\in E(G_k)$,
	join $P_k$ and $g$ by a positive
	$(H,P_k,g,q,d)$-indicator. 
\end{enumerate}

Moreover, let all the indicators satisfy the robustness property promised by Theorem~\ref{thm:indicators_existence} and Corollary~\ref{cor:neg_indicators_existence} respectively. 
When $H$ is a cycle of length $t\geq 4$, choose the gadgets in (i)--(iv) so that their girth equals~$t$. When $H \cong K_t\cdot K_2$ for some $t\geq 3$ and $q=2$,
choose these gadgets so that they have a $K_t\cdot K_2$-special $2$-coloring.
Note that the existence of all these
gadgets and colorings is given by Theorem~\ref{thm:signal_senders_existence}, Theorem~\ref{thm:indicators_existence}, and Corollary~\ref{cor:neg_indicators_existence}.
An illustration of the construction for the case $q=4$ can be found in Figure~\ref{fig:gnindicator}.

\medskip

Let $M_k=\{m_1^k,\ldots,m_q^k\}$ for every $k \in [q-1]$. 
Before showing that $I^{\ast}$ satisfies~\ref{def:gni:induced}--\ref{def:gni:Fnotmono},
we first discuss where copies of $H$ can be located in the graph $I^\ast$. Note that from the following two observations
we immediately obtain the desired robustness properties
as stated in Lemma~\ref{lem:gen_negative_indicators_existence}.

\begin{obs}\label{obs:gni_subgraph1}
Let $H$ be $3$-connected or a cycle.
Let $I'$ be a graph obtained from $I^{\ast}$
by adding two new vertex sets $S_F$ and $S_G$
and any collection of edges
within $S_F\cup V(F)$ and within $S_G\cup V(G)$.
Then every copy of $H$ in $I'$ is fully contained in one of the indicators from (i), (ii), or (iv), in one of the signal senders from (iii), or in one of the subgraphs 
induced by $S_F\cup V(F)$ or $S_G\cup V(G)$.
\end{obs}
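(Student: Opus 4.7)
The plan is to combine the robustness of each individual gadget in the construction of $I^{\ast}$ with a structural analysis of what remains of $I'$ once all gadget interiors are discarded. First, I would record the following consequence of Theorem~\ref{thm:signal_senders_existence}, Theorem~\ref{thm:indicators_existence}, and Corollary~\ref{cor:neg_indicators_existence}: each signal sender from step~(iii) and each indicator from steps~(i), (ii), and (iv) can be chosen so that it is $H$-robust with respect to its \emph{full} interface with the rest of $I^{\ast}$, that is, the two signal edges in the case of a signal sender and the indicator subgraph together with the indicator edge in the case of an indicator. When $H$ is $3$-connected this is automatic from the vertex-connectivity of $H$, since the only way to escape a gadget has size at most two; when $H\cong C_t$ it follows from the fact that all gadgets are chosen of girth $t$ and with pairwise distances of at least $d\geq v(H)+1$ between their interfaces, which together forbid a $t$-cycle from being split across any gadget boundary.

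Given this robustness, I would apply it to every gadget $J$ in turn. Viewing $I'$ as obtained from $J$ by attaching everything else through the interface of $J$, robustness gives that every copy of $H$ in $I'$ is either entirely contained in $J$ or avoids the interior of $J$. Suppose now that a copy of $H$ in $I'$ is not contained in any single gadget. Then the copy uses no interior vertex of any indicator or signal sender, and hence its edges lie in the ``skeleton'' consisting of the edges of $F$, the edges of $G$, the matching edges of $\bigcup_k (M_k\cup P_k)$, and the new edges added within $S_F\cup V(F)$ and within $S_G\cup V(G)$.

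By construction the vertex sets $V(F)$, $V(G)$, $V(M_k)$, $V(P_k)$ are pairwise disjoint, and none of the skeleton edges joins two of these sets (every edge joining them is hidden inside one of the gadgets). Consequently the skeleton decomposes as the disjoint union of the induced subgraph on $S_F\cup V(F)$, the induced subgraph on $S_G\cup V(G)$, and a collection of single edges coming from the matchings $M_k$ and $P_k$. Since $H$ is connected (being $3$-connected or a cycle of length at least three) and has more than two vertices, the copy of $H$ must lie in a single connected component of the skeleton and cannot fit inside a matching edge; the only remaining possibilities are $S_F\cup V(F)$ and $S_G\cup V(G)$, which is exactly the claim.

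The main obstacle is the very first step, namely justifying that each gadget is robust with respect to its \emph{full} interface and not merely with respect to its indicator subgraph as stated in Theorem~\ref{thm:indicators_existence} and Corollary~\ref{cor:neg_indicators_existence}. For $3$-connected $H$ this is an immediate consequence of vertex-connectivity, but for cycles it genuinely relies on the girth and distance conditions chosen when the gadgets were built, and is the technical heart of the argument.
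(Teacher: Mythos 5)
Your overall route mirrors the paper's: both arguments boil down to showing that any copy of $H$ meeting the interior of some gadget must live entirely inside that gadget, and then observing that what remains once all gadget interiors are removed is a disjoint union of $S_F\cup V(F)$, $S_G\cup V(G)$, and isolated matching edges, in which no cycle can hide. The structural decomposition in your second half is correct and is exactly the final step of the paper's proof.

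The gap is in the first step, and you flag it yourself but then underestimate it. For a $3$-connected $H$ you assert that robustness ``with respect to the full interface'' is \emph{immediate from vertex-connectivity}, ``since the only way to escape a gadget has size at most two.'' That reasoning is wrong for indicators: the interface of an indicator $I$ is $V(F)\cup V(e)$, and the indicator subgraph $F$ can have arbitrarily many vertices, so the escape route through $V(F)$ is \emph{not} bounded in size and cannot be handled by $3$-connectivity alone. The paper's argument for that part does not rely on connectivity at all --- it explicitly invokes the $(I,F)$-robustness guaranteed by Theorem~\ref{thm:indicators_existence} (which in turn rests on the auxiliary ``property $\mathcal{T}$'' structure of the indicator construction), combined with the distance bound $\dist(F,e)\geq d > v(H)$ to rule out a copy touching both $V(F)$ and $V(e)$. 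Vertex-connectivity is used only to handle the escape through the two-vertex sets $V(e)$ (indicator edges) or $V(e_{k_1}), V(e_{k_2})$ (signal edges), and it must be combined with the distance bound in the ambient graph to ensure a single copy cannot bridge two far-apart two-vertex exits. For the cycle case you concede you do not have the argument. Moreover, phrasing the desired fact as ``the gadget is $H$-robust with respect to its full interface,'' read as a property of the gadget alone in the sense of Definition~\ref{def:robustness}, is stronger than what you actually need and is not obviously a consequence of the cited theorems, since arbitrary edges added within $T\cup V(F)\cup V(e)$ could in principle create short detours between $V(F)$ and $V(e)$; the paper avoids this issue by arguing directly inside $I'$, where one controls exactly which edges are added and can verify that $\dist_{I^{\ast}}$ (and hence $\dist_{I'}$) between the relevant pieces is at least $d$.
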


\textit{Proof.}
For a contradiction, assume that some copy $H'$ of $H$ in $I'$ forms a counterexample.
Consider first the case when $H'$ uses a vertex $v\in S_G\cup V(G)$. Since $H'$ is a counterexample, we have
$V(H')\not\subseteq S_G\cup V(G)$. Hence, $H'$ needs to use an interior vertex of one of the indicators in (iv); without loss of generality, assume it is an indicator $I_{P_1}^+$ joining $P_1$ with an edge of $G_1$.
We then have $\dist_{I^{\ast}}(P_1,G)\geq d>v(H')$ by property~\ref{def:indicator:subgraph} of the indicators in (iv), 
and thus,
since $H'$ is $3$-connected or a triangle or a cycle with $v(H')=\text{girth}(I_{P_1}^+)$, it follows that $H'\subseteq I_{P_1}^+$, a contradiction.
We may therefore assume that $H'$ is vertex-disjoint from $S_G\cup V(G)$.

Consider next the case when $H'$ uses a vertex $v\in S_F\cup V(F)$. 
As before, we have
$V(H')\not\subseteq S_F\cup V(F)$. 
Hence, $H'$ needs to use an interior vertex of an indicator in (i); without loss of generality, assume it is an indicator $I_1$ between $F$ and an edge $m\in M_1$.
But then, since $\dist_{I_1}(m,F)\geq d>v(H')$ by property~\ref{def:indicator:subgraph}
and since $(I_1,F)$ is $H$-robust by Theorem~\ref{thm:indicators_existence}, we conclude that $H'\subseteq I_1$
must hold, contradicting our assumption.
Hence, we may also assume that $H'$ is vertex-disjoint from $S_F\cup V(F)$.

Now, if $H'$ uses an interior vertex of one of the signal senders $S^-$ in (iii), say between the edges $e_{k_1}$ and $e_{k_2}$, then again, using that
$\dist_S(e_{k_1},e_{k_2})\geq d$ by property~\ref{def:signal_senders:dist} and 
that $H'$ is $3$-connected or isomorphic to a triangle or $H'$ is a cycle with  $v(H')=\text{girth}(S)$,
we deduce that $H'$ must be fully contained in that signal sender.

Next, if $H'$ uses an interior vertex of one of the indicators in (i), (ii) or (iv), using the same argument and the robustness properties of our indicators, guaranteed by 
Theorem~\ref{thm:indicators_existence} and Corollary~\ref{cor:neg_indicators_existence} for positive and negative indicators respectively, 
we again conclude that $H'$ must be fully contained in that indicator. 

Hence, we are left with the case when $H'$ uses neither vertices from $S_F\cup S(F)$, nor vertices from $S_G\cup S(G)$, 
nor interior vertices from one of the gadgets in (i)--(iv). But then \mbox{$H'\subseteq \bigcup_{k\in [q-1]} (M_k\cup P_k)$},
which contradicts the fact that $H'$ contains at least one cycle. \hfill { $\checkmark$}

\begin{obs}\label{obs:gni_subgraph2}
Let $H \cong K_t\cdot K_2$.
Let $I'$ be a graph obtained from $I^{\ast}$
by adding two new vertex sets $S_F$ and $S_G$
and any collection of edges
within $S_F\cup V(F)$ and within $S_G\cup V(G)$.
Then every copy of $K_t$ in $I'$ is
fully contained in one of the indicators from (i), (ii), or (iv), in one of the signal senders from (iii), or in one of the subgraphs 
induced by $S_F\cup V(F)$ or $S_G\cup V(G)$.
\end{obs}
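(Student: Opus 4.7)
The proof will mirror that of Observation~\ref{obs:gni_subgraph1} closely, with two modifications: the $3$-connectivity / girth arguments used there to confine copies of $H$ inside a single gadget are replaced by the trivial observation that any two vertices of a $K_t$ copy are adjacent in $I'$, and we invoke the $K_t$-robustness of the indicators in (i), (ii), (iv) guaranteed by Theorem~\ref{thm:indicators_existence}\ref{thm:indicators_existence:pendant} and Corollary~\ref{cor:neg_indicators_existence}\ref{cor:neg_indicators_existence:pendant}, in place of $H$-robustness. Concretely, suppose for contradiction that $K \cong K_t$ is a copy in $I'$ that is not fully contained in any of the listed subgraphs.

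The first step is to show that if some $v \in V(K)$ lies in the interior of a gadget $X$, then $V(K) \subseteq V(X)$. This is immediate because an interior vertex $v$ is not in $V(F) \cup V(G) \cup \bigcup_{k} (V(M_k) \cup V(P_k))$, so no new edges of $I'$ are incident to $v$; hence $N_{I'}(v) = N_{I^{\ast}}(v) \subseteq V(X)$, and since $v$ is adjacent in $K$ to all of the remaining $t-1$ vertices of $K$, we get $V(K) \subseteq V(X)$. From here I would treat the four types of gadgets separately. For signal senders from (iii) and indicators from (ii), the vertex set is disjoint from $V(F) \cup V(G)$, so no new edges are incident to $V(X)$ at all and $K \subseteq X$ follows directly. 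For an indicator from (iv), the only intersection with $V(F) \cup V(G)$ is $V(g)$, which carries only the edge $g$ itself (already in $X$), so again $K \subseteq X$. For an indicator $X$ from (i), edges within $V(F) \subseteq V(X)$ may have been added, and this is the case where the $K_t$-robustness of $(X, F)$ must be invoked to conclude that $K$ lies either in $X$ or in the subgraph induced by $S_F \cup V(F)$.

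If no vertex of $K$ lies in a gadget interior, then every vertex of $K$ belongs to $V(F) \cup V(G) \cup S_F \cup S_G \cup \bigcup_{k}(V(M_k) \cup V(P_k))$. A matching vertex $v \in V(M_k) \cup V(P_k)$ has, outside the gadget interiors, at most one $I'$-neighbor, namely its matching partner; this contradicts $d_K(v) = t - 1 \ge 2$. Hence $V(K) \subseteq V(F) \cup V(G) \cup S_F \cup S_G$, and since $V(F) \cup S_F$ and $V(G) \cup S_G$ are vertex-disjoint in $I'$ and no edge of $I'$ joins the two sets (in $I^{\ast}$ every path between $V(F)$ and $V(G)$ must pass through some matching vertex or gadget interior, and the added edges of $I'$ stay within one of the two blocks), the clique $K$ must lie entirely in one of the two induced subgraphs, contradicting the choice of $K$.

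The main obstacle is the sub-case in which $X$ is an indicator from (i): there, edges of $K$ with both endpoints in $V(F)$ need not lie in $E(X)$, so a direct containment argument does not suffice, and one genuinely has to appeal to $K_t$-robustness. A small consistency check, which is however immediate, is that the additional vertices and edges introduced in $S_G \cup V(G)$ do not disturb the $K_t$-robustness hypothesis applied to $(X, F)$, because $V(X) \cap (S_G \cup V(G)) = \emptyset$ for any indicator $X$ from (i).
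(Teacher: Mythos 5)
Your proof is correct and, in spirit, it is what the paper means by ``analogous to the previous proof, except that we use the $K_t$-robustness.'' However, you organize it somewhat differently from a mechanical transcription of the Observation~\ref{obs:gni_subgraph1} argument, and the reorganization is a genuine improvement: since any $K_t$ has diameter one, the presence of a single interior vertex $v$ of a gadget $X$ immediately forces $V(K)\subseteq V(X)$ (because $v$'s $I'$-neighborhood lies in $V(X)$), replacing the case-by-case distance/$3$-connectivity/girth reasoning in the paper's template with a one-line observation. You also correctly isolate the indicators from (i) as the only place where $K_t$-robustness is really used, whereas the paper invokes robustness uniformly for (i), (ii), (iv); for (ii) and (iv) your direct argument (that $V(X)$ meets $S_F\cup V(F)\cup S_G\cup V(G)$ in nothing more than the two endpoints of $g$) suffices. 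One small point worth acknowledging explicitly: in the ``$K\subseteq X$ directly'' cases you also need that no $I^{\ast}$-edge lies within $V(X)$ outside of $E(X)$; this holds because the only candidates would join the two distinguished boundary subgraphs of $X$, which live in distinct gadget-families and are placed at distance $\geq d$, but it is a separate (routine) check rather than a consequence of the absence of new edges alone.
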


\textit{Proof.}
The proof is analogous to the previous proof,
except that we use the robustness properties of all gadget graphs with respect to $K_t$, guaranteed by Theorem~\ref{thm:indicators_existence} and Corollary~\ref{cor:neg_indicators_existence} for the indicators
in (i), (ii), and (iv).
\hfill { $\checkmark$}

\smallskip

It remains to show that
$I^{\ast}$ satisfies~\ref{def:gni:induced}--\ref{def:gni:Fnotmono} and
to verify the additional property required in case~\ref{lem:gen_negative_indicators_existence:pendant} regarding the existence of $K_t\cdot K_2$-special $2$-colorings for~\ref{def:gni:Hfree} and~\ref{def:gni:Fnotmono}.

\medskip

\ref{def:gni:induced} The graph $F$ is an induced subgraph of $I^{\ast}$, as it is an induced subgraph of each of the negative indicators
in (i) by property~\ref{def:indicator:subgraph}.
Also $G$ is an induced subgraph of $I^\ast$, since in the construction of $I^\ast$ 
we attach gadget
graphs to single edges of $G$ without adding any further edges inside $V(G)$.
Moreover, we have $\dist_{I^{\ast}}(F,G_k)\geq d$,
since, for every $k\in [q-1]$ and every $m\in M_k$,
the joining $(H,F,m,q,d)$-indicator $I_F^{-}$ from (i) satisfies $\dist_{I_F^{-}}(F,m)\geq d$ by property~\ref{def:indicator:subgraph}.

\medskip

\ref{def:gni:Hfree} We define a coloring 
$c:E(I^{\ast})\rightarrow [q]$ as follows:
\begin{itemize}
    \item Give color 1 to the edges of $F$.
    \item For every $k\in [q-1]$, give color $k+1$ to the edges $m_1^k$ and $m_2^k$.
    \item For every $k\in [q-1]$, color the edges of $M_k\setminus \{m_1^k,m_2^k\}$ such that 
    each color from $[q]\setminus \{1,k+1\}$
    is used exactly once.
    \item For every $k\in [q-1]$, give color $k+1$ to the edges of $P_k$ and $G_k$.
    \item Finally, extend this coloring to 
    each of the indicators and signal senders in (i)--(iv) so that none of these
    contains a monochromatic copy of $H$. 
    In case~\ref{lem:gen_negative_indicators_existence:pendant}, choose these colorings to be $K_t\cdot K_2$-special. 
\end{itemize}

The extension in the last step of the coloring
is possible for the following reason:
For the indicators in (i), we can find such an extension 
by properties~\ref{def:indicator:Hfree} and~\ref{def:indicator:same} for negative indicators and
since $c(F)=1\neq c(m)$ for every $k\in [q-1]$ and $m\in M_k$. 
For the indicators in (ii), consider two cases.
If $S=\{m_1^k,m_2^k\}$, then we have $c(S)=c(P_k)=k+1$, and
hence we can color as desired by properties~\ref{def:indicator:Hfree}
and~\ref{def:indicator:same}.
Otherwise, if  $S\in \binom{M_k}{2}$ is different from $\{m_1^k,m_2^k\}$, the coloring on $S$ is not constant
and hence we can extend as desired by property~\ref{def:indicator:non-constant}.
For the signal senders in (iii), the described extension is possible by properties~\ref{def:signal_senders:Hfree} and~\ref{def:signal_senders:signal} for negative signal senders 
and since $c(e_{k_1})\neq c(e_{k_2})$ for every distinct $k_1,k_2\in [q-1]$.
For the indicators in (iv), we again use
properties~\ref{def:indicator:Hfree} and~\ref{def:indicator:same}
plus the fact that $c(P_k)=c(G_k)$ for every $k\in [q-1]$.

It remains to check that the resulting coloring $c$ on $I^{\ast}$ is $H$-free. 
Consider first the case when $H$ is a cycle or $3$-connected. By Observation~\ref{obs:gni_subgraph1}, we know that each copy of $H$ must be fully contained in one of the gadgets in (i)--(iv) or in the graph $G$. By the choice of the coloring, we know that each of the gadgets is
colored without a monochromatic copy of $H$. Moreover, the coloring $c$ splits the graph $G$ into color classes given by the subgraphs  $G_1,\ldots,G_{q-1}$, none of which contains a copy of $H$ by the assumption of the lemma. 
Hence, $c$ is $H$-free in this case.

Next, consider the case when $H\cong K_t\cdot K_2$.
Assume there is a monochromatic copy $H'$ of $H$,
and let $K'$ denote the copy of $K_t$ in $H'$.
According to Observation~\ref{obs:gni_subgraph2}, $K'$
needs to be fully contained 
in one of the gadget graphs or
in one of the subgraphs $F$ or $G$.
If $H'$ is fully contained in one of these parts, then $H'$ cannot be monochromatic by the same argument as above. Hence, we may assume that
$K'$ uses a vertex of one of the signal edges, indicator edges, or indicator subgraphs. 
If $K'$ is contained in one of the gadget graphs,
then by the choice of the $K_t\cdot K_2$-special coloring for this gadget graph,
$K'$ cannot be monochromatic, a contradiction.

So assume next that $K'\subseteq G=G_1$. 
We need to check that no edge adjacent to $K'$ can be of the same color. Indeed, since $H\not\subseteq G_1$ by the assumption of the lemma, every edge incident to $K'$ must belong to one of the indicators from (iv) and must be incident to the corresponding indicator edge which is part of $K'$. But the $2$-coloring of each indicator was chosen to be $K_t\cdot K_2$-special, so any such edge has the opposite color, and hence $H'$ cannot be monochromatic, a contradiction. 
We are left with the case $K'\subseteq F$. As we have $H\not\subseteq F$
by the assumption of the theorem, we know that any edge adjacent to $K'$ must be part of one of the indicators from (i). But then $H'$ is fully contained in such an indicator and hence cannot be monochromatic, as the coloring on every gadget
is $H$-free, a contradiction.

Note that the last argument also shows half of the additional
property in case~\ref{lem:gen_negative_indicators_existence:pendant}, i.e., that
the $H$-free $2$-colorings
    in~\ref{def:gni:Hfree} can be chosen so that
    every monochromatic copy of $K_t$ using a vertex from $F\cup G$
    is contained fully in $F\cup G$.

\medskip

\ref{def:gni:Fmono}
Let $c$ be any $H$-free coloring of $I^{\ast}$ such that $F$ is monochromatic, say $c(F)=1$. 
By properties~\ref{def:indicator:Hfree} and~\ref{def:indicator:same} for negative indicators,
the indicators in (i) make sure that all edges in the matchings $M_k$
need to get a color different from 1.
Then, by the pigeonhole principle, in each matching $M_k$ 
there needs to be at least one color from 
$[q]\setminus \{1\}$ that appears at least twice. 
For each matching $M_k$,
fix one such color and denote it by $c_k$. 
By symmetry, we assume without loss of generality that
$c(m_1^k)=c(m_2^k)=c_k$. 
By property~\ref{def:indicator:same} for the
indicators in (ii), we conclude that $c(P_k)=c(e_k)=c_k$. 
Similarly, using property~\ref{def:signal_senders:signal}
for the signal senders in (iii), we obtain that all edges in $\{e_1,\ldots,e_{q-1}\}$ need to have distinct colors.
Since color $1$ is excluded, 
we may assume by symmetry
that $c_k=c(e_k)=k+1$ and thus $c(P_k)=k+1$.
Then, applying property~\ref{def:indicator:same} for
the positive indicators in (iv) yields 
that $c(G_k)=k+1$ and hence
$\{c(F),c(G_1),\ldots,c(G_k)\}=[q]$.

\medskip

\ref{def:gni:Fnotmono}
Let $\varphi_F$ and $\varphi_G$ satisfy the assumption in property~\ref{def:gni:Fnotmono}.
We define a coloring 
$c:E(I^{\ast})\rightarrow [q]$ as follows:
\begin{itemize}
    \item Color $F$ according to $\varphi_F$.
    \item Color $G$ according to $\varphi_{G}$.
    \item For every $k\in[q-1]$ and $\ell\in [q]$, give color $\ell$ to $m_{\ell}^k$.
    \item For every $k\in [q-1]$, give color $k$
    to $e_k$ and give color $k+1$ to
    the edge in $P_k-e_k$.
    \item Finally, extend this coloring to 
    each of the indicators and signal senders in (i)--(iv) so that none of these
    contains a monochromatic copy of $H$. 
    In case~\ref{lem:gen_negative_indicators_existence:pendant}, choose these colorings to be $K_t\cdot K_2$-special. 
\end{itemize}

The extension in the last step of the coloring
is possible for the following reason:
For the indicators in (i), we can find such an extension
by property~\ref{def:indicator:non-constant} for negative indicators and since $\varphi_F$ is not constant by assumption. 
For the indicators in (ii), such an extension exists by
property~\ref{def:indicator:non-constant}
and since
no subgraph $S\subseteq M_k$ of size two is colored
monochromatically.
For the signal senders in (iii), this extension is possible by properties~\ref{def:signal_senders:Hfree} and~\ref{def:signal_senders:signal}
and since $c(e_{k_1})\neq c(e_{k_2})$ for every distinct $k_1,k_2\in [q-1]$.
For the indicators in (iv), we again use 
property~\ref{def:indicator:non-constant}
plus the fact that $P_k$ 
is not monochromatic.

Finally, as in the discussion of~\ref{def:gni:Hfree},
it follows that $c$ must be $H$-free. Moreover, if $H\cong K_t\cdot K_2$ and $q=2$
then, taking a $K_t\cdot K_2$-special $2$-coloring for each of the gadget graphs,
we deduce that every monochromatic copy of $K_t$ that uses a vertex from $F\cup G$ is fully contained  in $F\cup G$.
That is, we obtain the second half of the additional property required in
case~\ref{lem:gen_negative_indicators_existence:pendant}.
\qedhere
\end{proof}

\subsection{Existence of pattern gadgets}\label{sec:pattern_gadgets_existence}

We now prove Theorem~\ref{thm:pattern_gadgets_existence}.

Set $t=|\mathscr{G}|$. For every 
$g = \{G_1,\ldots,G_q\}\in \mathscr{G}$, fix
an \emph{ordered color pattern}
$\vec{g}=(G_1,\ldots,G_q)$
with an arbitrary ordering of the subgraphs $G_i\in g$,
and denote the $j$th component of $\vec{g}$ by $\vec{g}_j$.
Further, let $\vv{\mathscr{G}} = \left\{\vec{g}:~ g\in \mathscr{G}\right\}$.
Choose $r\in \mathbb{Z}_{\geq 1}$ such that
$$
\binom{(r-1)q+1}{r} \geq t~ .
$$
Fix a matching $M$ of size $(r-1)q+1$ and a surjection
$
s:\binom{M}{r} \rightarrow \vv{\mathscr{G}}
$,
which exists by the choice of~$r$.
We construct a pattern gadget $P=P(H,G,\mathscr{G},q)$ as follows.
Take $G$ together with the given family $\mathscr{G}$ of $H$-free 
$q$-color patterns for $G$.
Further, take the matching $M$ to be vertex-disjoint from $G$  
and join submatchings of $M$ and edges of $G$
by generalized negative indicators and positive indicators as described below. For this, choose an integer $d$ such that $d>v(H)$. 
\begin{enumerate}
\item[(i)] For every $A\in \binom{M}{r}$ and every 
	edge $e\in E( s(A)_q )$, 
	join the submatching $A$ and the edge $e$
	by a positive $(H,A,e,q,d)$-indicator. 
\item[(ii)] For every $A\in \binom{M}{r}$, 
	join the submatching $A$ and the graph $G-(s(A))_q$
	by a generalized negative $(H,A,\{ s(A)_k \}_{k\in [q-1]} ,q,d)$-indicator. 
\end{enumerate}

The existence of the indicators needed in (i) and (ii) is given by
Theorem~\ref{thm:indicators_existence} and Lemma~\ref{lem:gen_negative_indicators_existence}.

In the case when $H\cong K_t\cdot K_2$ and $q=2$, 
we additionally choose all gadgets so that they
have $K_t\cdot K_2$-special $2$-colorings
as described in Theorem~\ref{thm:indicators_existence} and Lemma~\ref{lem:gen_negative_indicators_existence}\ref{lem:gen_negative_indicators_existence:pendant} respectively.
Moreover, we choose all the indicators so that they satisfy the robustness properties described in Theorem~\ref{thm:indicators_existence} and Lemma~\ref{lem:gen_negative_indicators_existence}.
Then, analogously to Observation~\ref{obs:gni_subgraph1} and Observation~\ref{obs:gni_subgraph2}, we can prove the following.

\begin{obs}\label{obs:pg_subgraph}
Let $P'$ be a graph obtained from $P$
by adding a vertex set $S$
and any collection of edges
within $S\cup V(G)$.
If $H$ is $3$-connected or a cycle,
then every copy of $H$ in $P'$ is
fully contained in one of the indicators from (i) or (ii)
or in the subgraph induced by $S\cup V(G)$.
If $H\cong K_t\cdot K_2$,
then every copy of $K_t$ in $P'$ is
fully contained in one of the indicators from (i) or (ii)
or in the subgraph induced by $S\cup V(G)$.
\end{obs}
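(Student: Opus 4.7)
The proof follows the same strategy as that of Observations~\ref{obs:gni_subgraph1} and~\ref{obs:gni_subgraph2}: I would assume for contradiction that $H'$ is a copy of $H$ in $P'$ (or $K'$ is a copy of $K_t$ in the case $H\cong K_t\cdot K_2$) that is neither contained in any single indicator from~(i) or~(ii), nor in $P'[S\cup V(G)]$. The key structural fact is that $V(P')$ splits as the disjoint union of $V(G)\cup S$, $V(M)$, and the interiors of the indicators from~(i) and~(ii), and that an interior vertex of any single indicator $I$ has all of its $P'$-neighbors inside $V(I)$.

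The first step is to show that $H'$ must use an interior vertex of some indicator. Otherwise $V(H')\subseteq V(G)\cup S\cup V(M)$; but then any vertex $w\in V(H')\cap V(M)$ would have $H'$-degree at most one, since outside of the indicator interiors the only $P'$-edges incident to $w$ are the single matching edge at $w$. Every vertex of $H'$ has degree at least two (as $H$ is $3$-connected, a cycle on at least three vertices, or $K_t$ with $t\geq 3$), forcing $V(H')\cap V(M)=\emptyset$ and hence $H'\subseteq P'[S\cup V(G)]$, a contradiction. So $H'$ uses an interior vertex of some indicator $I$ from~(i) or~(ii), and the remaining task is to deduce $H'\subseteq I$.

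For this final step I would combine the distance condition with robustness exactly as in the earlier observations. When $I$ is a positive indicator from~(i) with indicator subgraph $A$ and indicator edge $e$, property~\ref{def:indicator:subgraph} gives $\dist_I(V(A),V(e))\geq d>v(H')$, and since $H'$ is $3$-connected, a triangle, a cycle equal in length to $\text{girth}(I)$, or a copy of $K_t$, this distance bound forces $V(H')\cap V(I)$ to avoid one of the two boundary components $V(A),V(e)$. The $H$-robustness (respectively $K_t$-robustness) of $(I,A)$ granted by Theorem~\ref{thm:indicators_existence}, applied with $V(P')\setminus V(I)$ as the added vertex set, then yields $H'\subseteq I$. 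When $I$ is a generalized negative indicator from~(ii) the argument is entirely analogous, using the distance bound from property~\ref{def:gni:induced} together with Lemma~\ref{lem:gen_negative_indicators_existence}, which grants robustness of $I$ with respect to \emph{both} of its indicator subgraphs, so we may invoke whichever robustness corresponds to the boundary component missed by $H'$.

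The main technical obstacle I expect is verifying the hypothesis of robustness when passing from $P'$ to an allowed extension of $I$: one has to check that, once the distance/girth argument has excluded one side of the boundary of $I$, every edge of $H'$ leaving $V(I)$ lands in the remaining boundary component, so that the external fragment of $H'$ can indeed be viewed as an admissible extension of $I$ by vertices and edges within $S'\cup V(F)$. This is the same delicate point that is handled in Observations~\ref{obs:gni_subgraph1} and~\ref{obs:gni_subgraph2}, and with only cosmetic changes to the notation the same argument applies here.
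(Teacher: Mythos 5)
Your decomposition of $V(P')$ into $V(G)\cup S$, $V(M)$, and indicator interiors, together with the degree argument showing that a copy of $H$ (or $K_t$) avoiding all interiors is trapped inside $P'[S\cup V(G)]$, is exactly right and matches the structure of the paper's argument, which is itself only stated as being "analogous" to Observations~\ref{obs:gni_subgraph1} and~\ref{obs:gni_subgraph2}.

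There is, however, a genuine imprecision in the final step that you flag as "the main technical obstacle" but then wave away. For the positive indicators from~(i), Theorem~\ref{thm:indicators_existence} supplies robustness only of the pair $(I,A)$, not of $(I,e)$. The distance bound tells you that $H'$ avoids either $V(A)$ or $V(e)$, and these two cases are \emph{not} symmetric. If $H'$ avoids $V(e)$, then every $H'$-edge leaving $V(I)$ does so through $V(A)$, and $H'$ lives in an admissible extension of $I$ at $V(A)$; robustness of $(I,A)$ then gives $H'\subseteq I$. But if $H'$ avoids $V(A)$ and touches $V(e)\subseteq V(G)$ — which is precisely the case when $H'$ simultaneously meets $S\cup V(G)$ — then $H'$ uses edges leaving $V(I)$ through $V(e)$, and those edges are \emph{not} contained in any extension of $I$ by vertices and edges within $S'\cup V(A)$. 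Robustness of $(I,A)$ says nothing here. What is actually needed, and what the paper uses when it first disposes of the case ``$H'$ uses a vertex in $S_G\cup V(G)$'' in the proof of Observation~\ref{obs:gni_subgraph1}, is the direct cut argument: $V(e)$ is a set of only two vertices separating the interior of $I$ from the rest of $P'$, so $3$-connectivity (or the girth condition for cycles, or the $(t-1)$-connectivity of $K_t$) forbids $H'$ from having vertices on both sides, forcing $H'\subseteq I$. Your remark about "invoking whichever robustness corresponds to the boundary component missed by $H'$" is accurate for the generalized negative indicators from~(ii), since Lemma~\ref{lem:gen_negative_indicators_existence} does grant robustness with respect to both indicator subgraphs; but for the positive indicators it is not, and the proof needs the connectivity/girth cut argument to close that case.
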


Given this observation, it follows immediately
that $(P,G)$ is $H$-robust if $H$ is 3-connected or a cycle
and that $(P,G)$ is $K_t$-robust if $H\cong K_t\cdot K_2$.
Hence, it remains to verify that $P$ satisfies ~\ref{def:patterngadget_subgraphG}--\ref{def:patterngadget_fixedpattern},
and that in the case when $H\cong K_t\cdot K_2$ and $q=2$
we can find $2$-colorings for~\ref{def:patterngadget_fixedpattern}
as described in part \ref{thm:pattern_gadgets_existence:pendant} of Theorem~\ref{thm:pattern_gadgets_existence}.

\medskip

\ref{def:patterngadget_subgraphG}
Since $P$ is constructed by attaching different gadgets to $G$ without adding edges inside $V(G)$, we have
$G\subseteq_{ind} P$.

\medskip

\ref{def:patterngadget_somepattern} 
Let $c:E(P)\rightarrow [q]$ be any $H$-free coloring of $P$. 
By the pigeonhole principle, at least one color is used at least 
$r$ times on the matching $M$. Without loss of generality, say $c(A)=q$ for some $A\in \binom{M}{r}$.
Consider the pattern $g=\{s(A)_k\}_{k\in [q]}$.  
By property~\ref{def:indicator:same}
of the indicators in (i), we deduce that
every edge in $E(s(A)_q)$ also needs to have color $q$. 
Moreover, 
by property~\ref{def:gni:Fmono} of the generalized 
negative indicators in (ii),  each of the subgraphs $s(A)_k$ with $k\neq q$ 
is forced to be monochromatic, and all colors except for $c(A)=q$ get used among these subgraphs. 
Hence, $\{c_{|G}^{-1}(1), \dots, c_{|G}^{-1}(q)\} = g
\in \mathscr{G}$.

\medskip

\ref{def:patterngadget_fixedpattern} 
Let $g =\{G_1,\ldots,G_q\}\in \mathscr{G}$
be given. Fix an arbitrary set $A_0\in\binom{M}{r}$ such
that $s(A_0)=\vec{g}$. Without loss of generality, assume $s(A_0)_k=G_k$
for every $k\in [q]$; otherwise relabel the subgraphs in $g$.
We define a coloring $c:E(P)\rightarrow [q]$
as follows:
\begin{itemize}
    \item Give color $q$ to each edge in $A_0$.
    \item Color $M\setminus A_0$ so that each
    color from $[q-1]$ appears exactly $r-1$ times.
    \item For every $k\in [q]$, give color $k$
    to the edges of $G_k$.
    \item Finally, 
    extend this coloring to 
    each of the gadgets in (i) and (ii) 
    so that none of these
    contains a monochromatic copy of $H$. 
    In case~\ref{lem:gen_negative_indicators_existence:pendant}, choose these colorings to be $K_t\cdot K_2$-special. 
\end{itemize}

We claim that the extension in the last step of the coloring is indeed possible. Recall that each gadget from (i) and (ii) is associated to a submatching $A \in \binom{M}{r}$.
Suppose first that $A=A_0$. Then we have $c(A)=q=c(s(A)_q)$, and by properties~\ref{def:indicator:Hfree} and~\ref{def:indicator:same}, 
we find an extension as desired for the corresponding positive indicators in (i). 
Moreover, we have $c(s(A)_k)=k\neq q = c(A)$ for every color 
$k\in [q-1]$. Hence, by properties~\ref{def:gni:Hfree} and~\ref{def:gni:Fmono},
we find extensions as desired for the corresponding generalized negative indicators in (ii).
Consider next the case when $A\neq A_0$. Then $A$ is not monochromatic, since $A_0$ is the only monochromatic subset of $M$ of size $r$. 
Now, let $I$ be any positive indicator between $A$ and any edge
$e\in E( s(A)_q )$ as described in (i). Then, by property~\ref{def:indicator:non-constant}, we 
find an extension for $I$ as desired. 
Finally, let $I$ be the generalized negative indicator from (ii) 
for the set $A$. Then, using property~\ref{def:gni:Fnotmono}, 
we conclude analogously that an extension for $I$ can be found.

Finally, we have $\{G_1,\ldots,G_q\}=
\{c_{|G}^{-1}(1),\ldots,c_{|G}^{-1}(q)\}$. Since
$g=\{G_1,\ldots,G_q\}$ is an $H$-free $q$-color pattern
by the assumption of the theorem, 
we know that $c_{|G}$ is $H$-free. 
Now, if $H$ is 3-connected or a cycle, 
then every copy of $H$ in $P$ that is not contained in $G$
must be a subgraph of some indicator from (i) or (ii),
according to Observation~\ref{obs:pg_subgraph}.
But we already know that the coloring $c$ is $H$-free on every
indicator, and hence it is $H$-free on the whole graph $P$.

It remains to consider the case when
$H\cong K_t\cdot K_2$ and $q=2$. Assume there is a monochromatic copy $H'$ of $H$,
and let $K'$ denote its copy of $K_t$.
As above, if $H'$ is fully contained in one of the indicators, then it cannot be monochromatic.
Hence, we may assume that
$K'$ intersects the vertex set of an indicator edge or an indicator subgraph.
Then, by the 
$K_t\cdot K_2$-special $2$-colorings for the indicators,
we know that $K'$ needs to be
a subgraph of $G$. Without loss of generality, let $K'\subseteq G_1$.
Since $G$ does not contain a copy of $K_t\cdot K_2$ by assumption, we know that $E_G(V(K'),V(G_2))=\varnothing$.
Hence, the pendant edge $f$ of $H'$ needs
to belong either to a positive indicator between some $A\in \binom{M}{r}$ and some $e\in E(K')$, or to a generalized
negative indicator between some $A\in\binom{M}{r}$ and the graph $G_1 \supseteq K'$. In the former case, the edge $f$ needs to be incident to the indicator edge $e$ and hence
$c(e)\neq c(f)$ by the $K_t\cdot K_2$-special $2$-coloring of the corresponding positive indicator. In the latter case,
we have $c(f)\neq c(K')$ as the coloring of the generalized negative indicator was chosen to be $H$-free. Hence, in both cases $H'$ cannot be monochromatic, a contradiction.
\hfill $\Box$

\section{Applications of pattern gadgets}\label{sec:applications}

In this section, we present several applications of the pattern gadgets constructed in the previous section. We first prove Theorem~\ref{cor:arbitrarily_many_cycles}
and Theorem~\ref{cor:arbitrarily_many_Kt.k2} directly. The proof of Theorem~\ref{cor:arbitrarily_many_cliques} is given later in the section as a consequence of a more general result about 3-connected graphs (Theorem~\ref{thm:arbitrarily_many_3-connected}). 

\begin{proof}[Proof of Theorem~\ref{cor:arbitrarily_many_cycles}]
Let $H\cong C_t$ and $t \geq 4 $ and $q\geq 2$ be fixed. 
We first note that $s_q(H) \geq q+1$. Indeed, suppose there is a minimal $q$-Ramsey graph $G$ for $C_t$ with a vertex $v$ of degree at most $q$; by the minimality of $G$, there exists a $C_t$-free $q$-coloring of $G-v$. Now, coloring the edges incident to $v$ so that no two of them share a color gives a $q$-coloring of $G$ with no monochromatic $C_t$, a contradiction.

\begin{center}
\begin{figure}[t] 
	\begin{center}
	\includegraphics[scale=0.9]{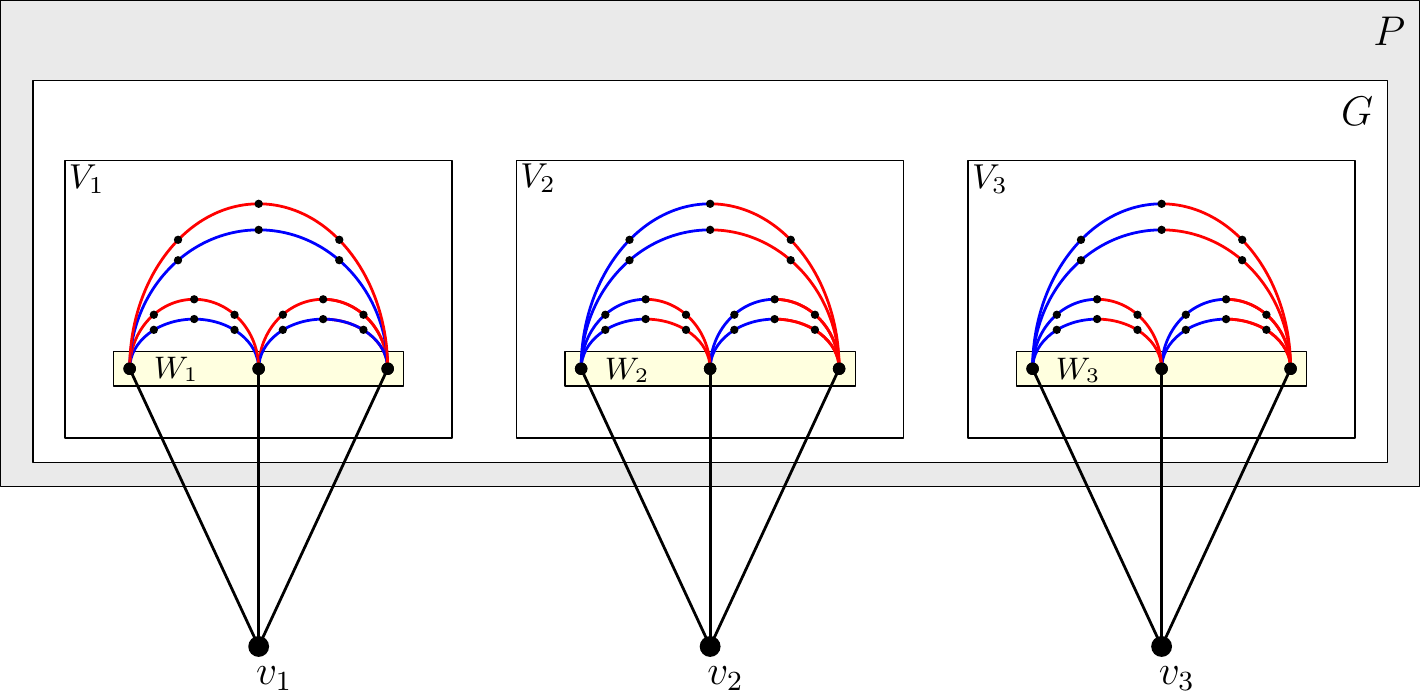}
	\end{center}
	\caption{Graph $\widetilde{G}$ for $q=2$, $t=6$ and $k=3$.}
	\label{fig:cycle-pattern}
\end{figure}
\end{center}

We now turn our attention to showing that there can be arbitrarily many vertices of degree $q+1$, also implying  that $s_q(C_t) = q+1$. Let $k\geq 1$. We now construct a minimal $q$-Ramsey graph for $H$ with at least $k$ vertices of degree $q+1$. 
Our graph will be constructed in several steps.
We refer the reader to Figure~\ref{fig:cycle-pattern} for an illustration of our construction in the case when $q=2$, $t=6$, and $k=3$. 

To begin with, let $W$  be a set of $q+1$ vertices.
For every $u,w \in W$ and $u \neq w$, add 
$q$ internally vertex-disjoint paths of length $t-2$ with $u$ and $w$ as endpoints. Call the resulting graph $F$. Let $c_1: E(F) \rightarrow [q]$ be a coloring of the edges of $F$ such that, for every distinct $u,w \in W$, every path between $u$ and $w$ is monochromatic but no two such paths are monochromatic in the same color. Let $c_2: E(F) \rightarrow [q]$ be another coloring of the edges of $F$ such that, for every distinct $u,w \in W$, no path between $u$ and $w$ is monochromatic. We define $f_1$ and $f_2$, two $q$-color patterns for $F$, by setting $f_1=\{c_1^{-1}(i)\}_{i \in [q]}$ and $f_2=\{c_2^{-1}(i)\}_{i \in [q]}$. Note that $f_1$ and $f_2$ are $H$-free.

We now take $k$ vertex-disjoint copies $F_1,\dots, F_k$ of $F$, where $F_i=(V_i,E_i)$ for all $1\leq i\leq k$, and
denote by $W_i$ the subset of $V_i$ corresponding to $W$ in $V(F)$. Call this graph $G$, and define  $V = \bigcup\limits_{i=1}^k V_i$. Note that $G\not\rightarrow_{q} H$, since $F\not\rightarrow_{q} H$. Let $\mathscr{G}$ be a family of $q$-color patterns for $G$ such that $g\in \mathscr{G}$ if and only if there exists an $i \in [k]$ such that $g[V_i] \cong f_1$ and $g[V_j] \cong f_2$ for all $j \neq i$. Note that $\mathscr{G}$ is a family of $H$-free $q$-color patterns for $G$.

By Theorem~\ref{thm:pattern_gadgets_existence}, we know that there exists a pattern gadget $P=P(H,G,\mathscr{G},q)$. Moreover, we can choose the pattern gadget $P$ in such a way that the pair $(P,G)$ is $H$-robust. We add $k$ additional vertices $v_1, \dots , v_k$ to $P$, and for all $i \in [k]$, we add edges from $v_i$ to all vertices in $W_i$. We call the resulting graph $\widetilde{G}$.
\smallskip

We now show that $\widetilde{G} \rightarrow_q H$ and that each of the new vertices $v_i$ is important for $\widetilde{G}$ to have this property, that is, $\widetilde{G} - v_i \not\rightarrow_q H$ for every $i \in [k]$. This then implies the existence of a minimal $q$-Ramsey graph for $H$ with the desired properties. Indeed, consider any minimal $q$-Ramsey graph $\widetilde{G}' \subseteq \widetilde{G}$. Since $\widetilde{G} - v_i \not\rightarrow_q H$, we know that $v_i \in V(\widetilde{G}')$ for every $i \in [k]$. Also $q+1 \leq s_q(C_t)   \leq d_{\widetilde{G}'}(v_i) \leq q+1$, which means that $d_{\widetilde{G}'}(v_i)=s_q(C_t)=q+1$.
\medskip

First, we show that $\widetilde{G} \rightarrow_{q} H$. Let $c: E(\widetilde{G})\rightarrow [q]$ be a $q$-coloring of the edges of $\widetilde{G}$, and assume $c$ is $H$-free. For each $i\in[q]$, define $c_i=c^{-1}(i)$ to be the $i$th color class with respect to $c$. By property~\ref{def:patterngadget_somepattern} of the pattern gadget $P$, we know that $g=\{c_1[V], \dots , c_q[V]\} \in \mathscr{G}$; by the definition of $\mathscr{G}$, there exists an $i \in [k]$ such that $\{c_1[V_i], \dots , c_q[V_i] \}\cong f_1$. Without loss of generality, we may assume $i=1$. Consider the edges from $v_1$ to the vertices of $W_1$. There are $q+1$ such edges and they are colored in $q$ colors, so by the pigeonhole principle there are two vertices in $W_1$, say $u$ and $w$, such that $c(v_1u)=c(v_1w)$. Again without loss of generality, we may assume $c(v_1u)=1$. By our choice of $f_1$, we know that there is a monochromatic path of length $t-2$ in color $1$ between the vertices $u$ and $w$. This monochromatic path along with the edges $v_1u$ and $v_1w$ gives a monochromatic cycle of length $t$, contradicting our assumption. 
\smallskip

Next, we show  that $\widetilde{G} - v_i \not\rightarrow_{q} H$ for every $i \in [k]$. By symmetry, it is enough to show this for $i =1$. Partition the vertices in $V$ in the following way: For every $\ell \in [k]$, write $G[V_\ell]= G_{\ell,1} \cup \dots \cup G_{\ell,q}$ so that $\{G_{1,j}\}_{j \leq q} \cong f_1$ and $\{G_{\ell,j}\}_{j \leq q} \cong f_2$ for $\ell \neq 1$. We define a coloring $c: E(G)\rightarrow [q]$ by setting $c(G_{\ell,j})=j$ for every $\ell \in [k]$ and $j \in [q]$. The $q$-color pattern on $V$ defined by $c$, namely $\set{c_{|G}^{-1}(1),\dots, c_{|G}^{-1}(q)}$, is in $\mathscr{G}$, and by property~\ref{def:patterngadget_fixedpattern}, we can extend $c$ to an $H$-free coloring of $P$. We then color the remaining edges in $\widetilde{G} - v_1$ arbitrarily, and denote the resulting $q$-coloring of $\widetilde{G}-v_1$ by $\Tilde{c}$. Since $\Tilde{c}_{|P}$ is $H$-free, any monochromatic copy of $H$ in $\widetilde{G}- v_1$ needs to contain a vertex $v_\ell$ for some $\ell \geq 2$. Now, due to the $C_t$-robustness of the pair $(P,G)$, any possible monochromatic copy of $H$ must be contained in some $V_\ell \cup \{v_\ell\}$. Such a copy then needs to contain two vertices of $W_\ell$ and a path of length $t-2$ between them. But we know that $\{c_{|G[V_\ell]}^{-1}(j)\}_{j \in [q]} \cong f_2$, and by the definition of $f_2$, no such path is monochromatic. Hence, no monochromatic copy of $H$ exists. 
\end{proof}

\begin{proof}[Proof of Theorem~\ref{cor:arbitrarily_many_Kt.k2}]
It was shown by Fox et al.~\cite{fox2014ramsey} that $s_2(K_t \cdot K_2)=t-1$ for every $t \geq 3$. We now show that a minimal $2$-Ramsey graph for $K_t \cdot K_2$ can contain arbitrarily many vertices of this minimum degree.

Let $H\cong K_t \cdot K_2$ for some $t \geq 3$, and let $k\geq 1$ be fixed.
Our construction of a minimal $2$-Ramsey graph for $H$ containing at least $k$ vertices of degree $t-1$ will combine ideas similar to those in the proof of Theorem~\ref{cor:arbitrarily_many_cycles} with ideas from the construction given by Fox et al.~\cite{fox2014ramsey}.  
We again refer the reader to Figure~\ref{fig:ktk2-pattern} for an illustration of the case $t=4$ and $k=3$. 

We begin by defining $F$ to be the vertex disjoint union of $t-1$ copies of $K_t$. For every copy of $K_t$, we fix an arbitrary vertex and call the set of all these vertices $W$. Let $c_1: E(F) \rightarrow \{red, blue\}$ be a $2$-coloring that colors every edge of $F$ red. 
Let $c_2: E(F) \rightarrow \{red, blue\}$ be another $2$-coloring of the edges of $F$ such that no copy of $K_t$ is monochromatic (in either color). We define two color patterns $f_1$ and $f_2$ for $F$ by setting $f_1=\{c_1^{-1}(red), c_1^{-1}(blue)\}$ and $f_2=\{c_2^{-1}(red), c_2^{-1}(blue)\}$. Note that $f_1$ and $f_2$ are $H$-free. 

Now take $k$ vertex-disjoint copies $F_1,\dots, F_k$ of $F$, where $F_i=(V_i,E_i)$ for $1\leq i\leq k$, and let $W_i$ be the subset of $V_i$ corresponding to the set $W$ in $V(F)$. Call this graph $G$, and define $V = \bigcup\limits_{i=1}^k V_i$. Note that $G$ does not contain any copies of $H$. Let $\mathscr{G}$ be a family of $2$-color patterns for $G$ such that $g \in \mathscr{G}$ if and only if there exists an $i \in [k]$ such that $g[V_i] \cong f_1$ and $g[V_j] \cong f_2$ for all $j \neq i$. Note that $\mathscr{G}$ is a family of $H$-free $2$-color patterns for $G$.

By Theorem~\ref{thm:pattern_gadgets_existence}, we deduce that there exists a pattern gadget $P=P(H,G,\mathscr{G},2)$. Moreover, we can choose the pattern gadget $P$ in such a way that the pair $(P,G)$ is $K_t$-robust and that for property~\ref{def:patterngadget_fixedpattern} there is always an $H$-free $2$-coloring such that, if a monochromatic copy of $K_t$ uses a vertex from $G$, then it lies entirely in $G$. We add $k$ additional vertices $v_1, \dots , v_k$ to $P$ with edges from $v_i$ to all vertices of $W_i$ for all $i \in [k]$; also, for all $i\in[k]$, we add an edge between each pair of distinct vertices in $W_i$.
Lastly, we choose an arbitrary vertex in $W_i$ and add a pendant edge $e_i$ incident to that vertex. We call the resulting graph $\widetilde{G}$.

\begin{center}
\begin{figure}[t] 
	\begin{center}
	\includegraphics[scale=0.9]{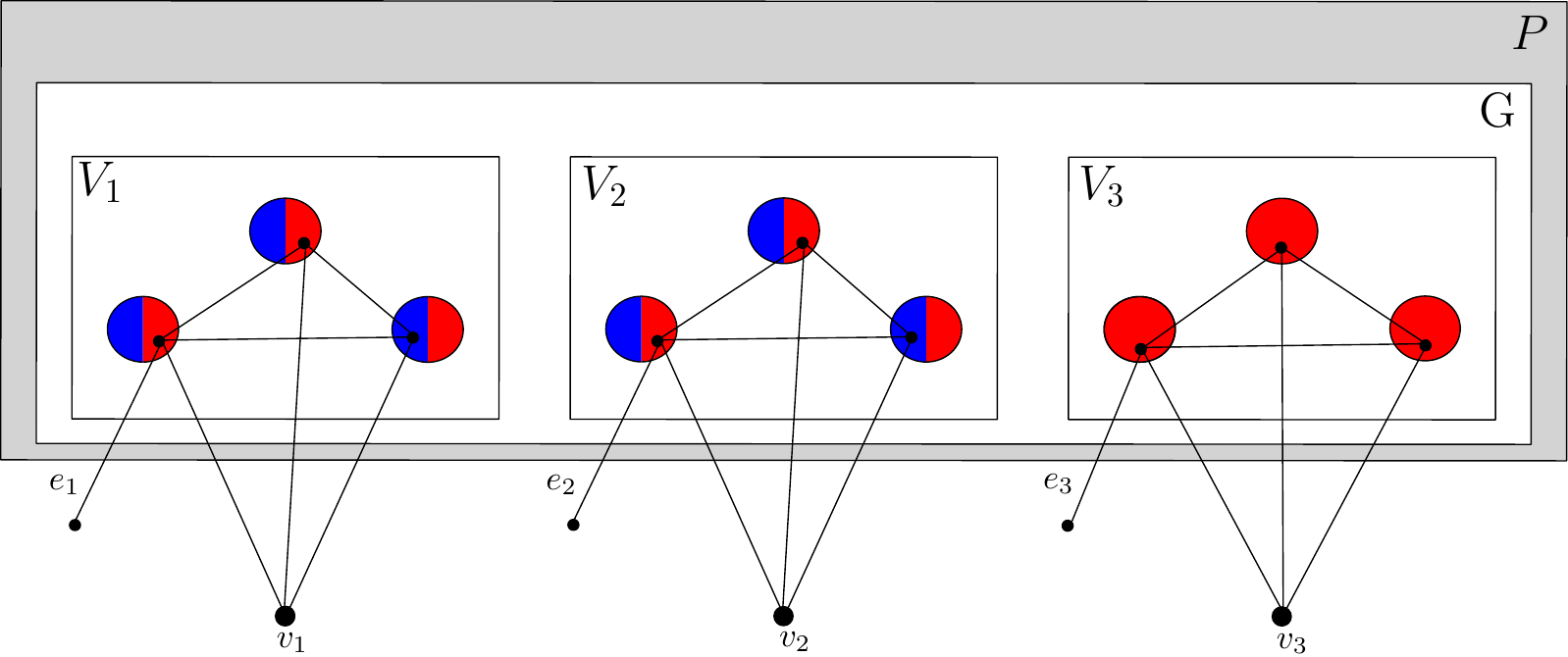}
	\end{center}
	\caption{Graph $\widetilde{G}$ for $t=4$ and $k=3$.}
	\label{fig:ktk2-pattern}
\end{figure}
\end{center}

We now show that $\widetilde{G} \rightarrow_2 H$ and that $\widetilde{G} - v_i \not\rightarrow_2 H$ for every $i \in [k]$. This, as argued in the proof of Theorem~\ref{cor:arbitrarily_many_cycles}, implies the existence of a minimal $2$-Ramsey graph with the desired properties. 

First we show that $\widetilde{G} \rightarrow_{2} H$. Let $c: E(\widetilde{G})\rightarrow \{red, blue\}$ be a $2$-coloring of the edges of $\widetilde{G}$; assume $c$ is $H$-free. Define $c_{red}=c^{-1}(red)$ and $c_{blue}=c^{-1}(blue)$ to be the two color classes with respect to $c$. By property~\ref{def:patterngadget_somepattern} of the pattern gadget $P$, we know that $g=\{c_{red}[V], c_{blue}[V]\} \in \mathscr{G}$, and by the definition of $\mathscr{G}$, there exists an $i \in [k]$ such that $\{c_{red}[V_i], c_{blue}[V_i] \}\cong f_1$. Without loss of generality, we may assume $i=1$ and that every edge inside $V_i$ is red. Consider the edges with endpoints in the set $W'=W_1 \cup \{v_1\}$. Since $c$ is an $H$-free coloring of $\widetilde{G}$ and each such edge $e$ has at least one endpoint in $W_1$ (and is hence incident to an all-red copy of $K_t$), we obtain that $c(e)=blue$. As a result, the graph induced by $W'$ is a monochromatic blue copy of $K_t$. Now, the pendant edge $e_1$ is incident to monochromatic copies of $K_t$ in both colors and thus creates a monochromatic copy of $H$ irrespective of its color. This contradicts our assumption.

Next, we show that, for every $i \in [k]$, we have $\widetilde{G} - v_i \not\rightarrow_{2} H$. By symmetry, it suffices to show this for~$i=1$. 
For every $\ell \in[k]$, take a partition $G[V_\ell]= G_{\ell,red} \cup G_{\ell,blue}$ such that $\{G_{1,red}, G_{1,blue}\} \cong f_1$ and $\{G_{\ell,red}, G_{\ell,blue}\} \cong f_2$ for $\ell \neq 1$.
We define a coloring ${c}: E(\widetilde{G})\rightarrow \set{red,blue}$ by first setting ${c}(G_{\ell,j})=j$ for every $\ell \in [k]$ and $j \in \{red, blue\}$. The color pattern defined on $G$ by ${c}$ is in $\mathscr{G}$, and by property~\ref{def:patterngadget_fixedpattern} of $P$, we can extend this to all of $P$ so that the coloring ${c}_{|P}$ is $H$-free and has the following additional property:
\begin{enumerate}
    \item[(P)] If a monochromatic copy of $K_t$ in the coloring ${c}_{|P}$ uses a vertex from $G$, then it lies entirely in $G$.
\end{enumerate}
Now, 
for every $\ell \geq 2$, color one edge between $v_\ell$ and $W_\ell$ red and color the remaining edges in $E_{\widetilde{G}}(W_\ell\cup \{v_\ell\})\cup \{e_\ell\}$ blue. Further, color all edges in $E_{\widetilde{G}}(W_1)\cup \{e_1\}$ with the color
not used on $G[V_1]$ (recall that $G[V_1]$ was colored monochromatically as
$\{G_{1,red}, G_{1,blue}\} \cong f_1$).

We claim that this coloring is $H$-free. For a contradiction, assume that 
there is a monochromatic copy $H'$ of $H$ produced by the coloring ${c}$.
Since ${c}_{|P}$ is $H$-free, $H'$ needs to use at least one edge $e_0$ from $E_{\widetilde{G}}(W_1) \cup \{e_1\}$ or
from $E_{\widetilde{G}}(W_\ell \cup \{v_\ell\}) \cup \{e_\ell\}$ for some $\ell\geq 2$.

Consider first the case when
$e_0\in E_{\widetilde{G}}(W_1) \cup \{e_1\}$. We know that $G[V_1]$
is monochromatic and that $e_0$ has the opposite color, say $G[V_1]$ is red and $e_0$ is blue. Then, by property (P) and the fact that $|W_1|=t-1$, there can be no blue copy of $K_t$ in the subgraph induced by the set $V_1\supseteq W_1$. 
But this means that $e_0$ cannot be part of a blue copy of $K_t\cdot K_2$, a contradiction.

Consider now the case when
$e_0\in E_{\widetilde{G}}(W_\ell \cup \{v_\ell\}) \cup \{e_\ell\}$ for some $\ell\geq 2$, and  assume  without loss of generality that $\ell=2$.
By the $K_t$-robustness of the pair $(P,G)$, the copy $H'$ of $H$ must be contained within $E_{\widetilde{G}}(V_2 \cup \{v_2\}) \cup \{e_2\}$. 
Since ${c}_{|G[V_2]} \cong f_2$, i.e.,
the copies of $K_t$ in $F_2$ are not monochromatic, and ${c}$ satisfies property (P), we obtain that $G[V_2]$ does not contain a monochromatic copy of $K_t$.
From this and the fact that $e_2$ is a pendant edge it follows that the vertices of the copy of $K_t$ in $H'$ must be contained entirely in $W_2\cup \set{v_2}$. But this set contains precisely $t$ vertices that do not form a monochromatic copy of $K_t$, again giving a contradiction.
\end{proof}

Before turning to the proof of Theorem~\ref{cor:arbitrarily_many_cliques},
we state and prove a more general statement concerning $3$-connected graphs.
Roughly speaking, it reduces the problem of showing
$s_q$-abundance to that of finding a suitable minimal $q$-Ramsey graph containing at least one vertex of the desired small degree. In fact, we can even relax the condition that the $q$-Ramsey graph be minimal and that the desired small degree be precisely $s_q(H)$ for the given graph $H$.

\begin{theorem}\label{thm:arbitrarily_many_3-connected}
Let $H$ be 3-connected or a triangle and assume there exists a graph $F$ together with a vertex $v\in V(F)$ and an edge $e\in E(F)$ satisfying the following properties:
\begin{enumerate}[label=\itmarab{F}]
    \item\label{goodF:Ramsey} $F\rightarrow_{q} H$.
    \item\label{goodF:shareH} $v$ and $e$ do not share a copy of $H$ in $F$.
    \item\label{goodF:notRamsey1} $F-e\not\rightarrow_q H$. \item\label{goodF:notRamsey2}
    $F-g\not\rightarrow_q H$ for every $g\in E(F)$ which is incident to $v$.
\end{enumerate}
Then, for any $k\in \mathbb{Z}_{\geq 1}$, there exists a minimal $q$-Ramsey graph for $H$ that has $k$ vertices of degree $d_F(v)$.
\end{theorem}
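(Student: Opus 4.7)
The plan is to adapt the pattern gadget construction from the proofs of Theorems~\ref{cor:arbitrarily_many_cycles} and~\ref{cor:arbitrarily_many_Kt.k2}. I would take $k$ vertex-disjoint copies $F_1, \ldots, F_k$ of $F$, with $v_i$ and $e_i$ denoting the copies of the distinguished vertex $v$ and edge $e$ in $F_i$, and set $G = \bigsqcup_{i=1}^k (F_i - e_i)$; by~\ref{goodF:notRamsey1} we have $G \not\rightarrow_q H$. The family $\mathscr{G}$ would consist of $H$-free $q$-color patterns on $G$ encoding a choice of ``active'' copy: for each $i \in [k]$, I include a distinguished pattern $g_i \in \mathscr{G}$ whose restriction to $F_i - e_i$ is an $H$-free $q$-coloring of $F - e$ (guaranteed by~\ref{goodF:notRamsey1}), and whose restriction to each $F_j - e_j$ for $j \neq i$ is a carefully chosen coloring obtained as the restriction to $F - e$ of an $H$-free $q$-coloring of $F - g$ for a suitable edge $g$ incident to $v$ (guaranteed by~\ref{goodF:notRamsey2}). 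Applying Theorem~\ref{thm:pattern_gadgets_existence}, I obtain a pattern gadget $P = P(H, G, \mathscr{G}, q)$ with $(P, G)$ being $H$-robust, and define $\widetilde{G} := P \cup \{e_1, \ldots, e_k\}$.

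To verify $\widetilde{G} \rightarrow_q H$, I note that any $H$-free $q$-coloring of $\widetilde{G}$ restricts to an $H$-free coloring of $P$, inducing a pattern $g_i \in \mathscr{G}$ on $G$; adding $e_i$ to the $H$-free coloring on $F_i - e_i$ yields a $q$-coloring of a copy of $F$, which by~\ref{goodF:Ramsey} contains a monochromatic $H$. This $H$ must use $e_i$ (since the rest of $F_i$ is $H$-free) and, by~\ref{goodF:shareH}, cannot use $v_i$, contradicting $H$-freeness. To verify $\widetilde{G} - v_i \not\rightarrow_q H$ for each $i \in [k]$, I use $g_i$ to obtain an $H$-free coloring of $P$ and extend to $\widetilde{G} - v_i$ by coloring each $e_j$ with the color it would receive in the $H$-free $q$-coloring of $F - g$ used to define $g_i$; the $H$-robustness of $(P, G)$ together with the design of $g_i$ is intended to ensure that any monochromatic $H$ in the extension must have involved the removed vertex $v_i$, giving the desired $H$-free coloring. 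Once both properties are established, any minimal $q$-Ramsey subgraph $\widetilde{G}' \subseteq \widetilde{G}$ must contain each $v_i$, and condition~\ref{goodF:notRamsey2} implies that all $d_F(v)$ edges incident to $v_i$ in its copy of $F$ are also critical, so $d_{\widetilde{G}'}(v_i) = d_F(v)$.

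The main obstacle is justifying the $H$-freeness of the extension in the second step. Because $F$ itself is $q$-Ramsey for $H$, every complete $q$-coloring of each copy $F_j$ produces a monochromatic $H$; naively extending $g_i$ by coloring $e_j$ could leave monochromatic $H$'s in $F_j$ for $j \neq i$ that persist after removing $v_i$. The key tool for resolving this is condition~\ref{goodF:shareH}, which ensures that any monochromatic $H$ in $F_j$ uses exactly one of $v_j$ and $e_j$, not both; this separation allows the non-forcing patterns to be designed so that the monochromatic $H$'s that would arise in $F_j$ can either be ruled out through the coloring flexibility inherited from the $H$-free coloring of $F - g$ or else traced back to using $v_i$ via the global structure provided by $P$. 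Carrying out this design carefully, using the combined strength of all four conditions~\ref{goodF:Ramsey}--\ref{goodF:notRamsey2}, is the most delicate part of the proof.
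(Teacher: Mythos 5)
Your proposal takes a genuinely different decomposition from the paper, and that difference is fatal in two places.

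First, you set $G = \bigsqcup_{i=1}^k (F_i - e_i)$, which keeps the vertex $v_i$ \emph{inside} the pattern gadget. The pattern gadget construction attaches indicators and generalized negative indicators to the edges of $G$, and since $v_i$ is an endpoint of some of those edges, the attached gadgets add further edges incident to $v_i$. Consequently, $d_{\widetilde{G}}(v_i)$ is much larger than $d_F(v)$, and you never argue that these extra gadget edges can all be removed when passing to a minimal $q$-Ramsey subgraph. Condition~\ref{goodF:notRamsey2} tells you that the edges $g_1^i,\dots,g_{d_F(v)}^i$ are critical, but it says nothing about the gadget edges touching $v_i$, so you cannot conclude $d_{\widetilde{G}'}(v_i) = d_F(v)$.

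Second, and more fundamentally, your choice $\widetilde{G} := P \cup \{e_1,\dots,e_k\}$ means that $\widetilde{G}$ contains $k$ \emph{full} copies of $F$. Then $\widetilde{G} - v_i$ still contains the complete copies $F_j$ for every $j\neq i$, and by~\ref{goodF:Ramsey} \emph{every} $q$-coloring of such a copy contains a monochromatic $H$. Hence $\widetilde{G} - v_i \rightarrow_q H$ automatically, and the statement $\widetilde{G}-v_i \not\rightarrow_q H$ you need is simply false for your construction. You gesture at this difficulty in your last paragraph and hope that~\ref{goodF:shareH} together with ``the global structure provided by $P$'' rescues it, but robustness of $(P,G)$ only controls where copies of $H$ can live; it cannot make a monochromatic copy lying entirely inside $F_j \subseteq \widetilde{G}-v_i$ disappear. (Relatedly, the pattern you try to place on $F_j - e_j$ for $j\neq i$ is ill-defined: an $H$-free coloring of $F-g$ is undefined on the edge $g$, so its ``restriction to $F-e$'' does not color all of $E(F-e)$.)

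The paper sidesteps both problems by setting $F' := F - v - e$ and taking $G$ to be $k$ disjoint copies of $F'$, so the vertices $v_i$ are \emph{outside} the gadget and are added only at the end, with exactly the $d_F(v)$ edges $g_1^i,\dots,g_{d_F(v)}^i$; the edge $e$ is never added. Thus $\widetilde{G}$ contains copies of $F-e$ but never of $F$. The family $\mathscr{G}$ is built from colorings $c_{1,j}$ of $F'$ that cannot be extended to an $H$-free coloring of $F-e$ (forcing Ramseyness when $v_i$ is attached) but can be extended $H$-freely to $F-\{e,g_j\}$ (giving $\widetilde{G}-g_j^i \not\rightarrow_q H$, hence criticality of each edge at $v_i$), together with a coloring $c_2$ of $F'$ extendable to $F-e$ (used on the inactive copies). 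Your proof would need to be rebuilt along these lines.
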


\begin{proof}
Given a graph $F$ with the required properties, 
denote  the edges incident to $v$ in $F$ by $g_1,\ldots,g_{d_{F}(v)}$. Let $F'=F-v-e$. In order to define $q$-color patterns for an application of Theorem~\ref{thm:pattern_gadgets_existence}, we first observe the existence of
two types of $H$-free $q$-colorings on $F'$.

\begin{clm}
For every $j\in [d_F(v)]$,
there exists an $H$-free $q$-coloring $c_{1,j}$ of $F'$ such that
\begin{itemize}
\item $c_{1,j}$ can be extended to an $H$-free $q$-coloring of $F-\{e,g_j\}$, and
\item $c_{1,j}$ cannot be extended to an $H$-free $q$-coloring of $F-e$.
\end{itemize}
\end{clm}

\textit{Proof.}
By property~\ref{goodF:notRamsey2}, there exists an $H$-free $q$-coloring $\varphi$ of $F-g_j$. We set $c_{1,j}:=\varphi_{|F'}$.
One observes easily that this is an $H$-free $q$-coloring of $F'$
and that $\varphi_{|F-\{e,g_j\}}$ is an extension to $F-\{e,g_j\}$
that is $H$-free. Hence, it remains to check that there is no $H$-free extension to the graph $F-e$. 

For a contradiction, assume that there exists some $H$-free coloring $\psi: E(F-e) \rightarrow [q]$ extending $c_{1,j}$. The $q$-coloring $\widetilde{\psi}:E(F)\rightarrow [q]$
defined by
\begin{align*}
\widetilde{\psi}(f)=
\begin{cases}
\psi(f) & \text{ if }f\neq e\\
\varphi(e) & \text{ if }f=e
\end{cases}
\end{align*}
cannot be $H$-free by property~\ref{goodF:Ramsey}. 
Thus, there must be a copy $H'$ of $H$ that is monochromatic under $\widetilde{\psi}$; moreover, $H'$ needs to use the edge $e$ as $\widetilde{\psi}_{|F-e}=\psi$ is $H$-free.
By property~\ref{goodF:shareH},
we have $v\notin V(H')$, that is, $H'$ lies entirely in the graph
$F-v$. However, $\widetilde{\psi}_{|F-v}=\varphi_{|F-v}$, since
$\widetilde{\psi}_{|F'}=\psi_{|F'}=c_{1,j}=\varphi_{|F'}$
and $\widetilde{\psi}(e)=\varphi(e)$. Hence, since $\varphi$ is $H$-free, $H'$ cannot be monochromatic, a contradiction. \hfill $\checkmark$

\begin{clm}
There exists an $H$-free $q$-coloring $c_2$ of $F'$ that can be extended to an $H$-free $q$-coloring of $F-e$. 
\end{clm}

\textit{Proof.} By property~\ref{goodF:notRamsey1} there exists an
$H$-free $q$ coloring $\varphi$ of $F-e$. We set $c_2:=\varphi_{|F'}$. \hfill \checkmark

\medskip

Given the colorings of our previous claims,
we next define $H$-free $q$-color patterns $f_{1,j}$, with $j\in [d_F(v)]$, and $f_2$ for $F'$ by partitioning $F'$ into its color classes with respect to $c_{1,j}$ and $c_2$, respectively. More precisely, we set
\begin{align*}
f_{1,j}   =\{c_{1,j}^{-1}(i)\}_{i\in [q]} ~~ \text{ and } ~~
f_2  =\{c_2^{-1}(i)\}_{i\in [q]}.
\end{align*}

Now let $k\geq 1$ be an integer. We proceed similarly as in the proof of Theorem~\ref{cor:arbitrarily_many_cycles} and construct a graph $\widetilde{G}$ that will be a $q$-Ramsey graph for $H$
with the additional property that there are at least $k$
vertices of degree $d_F(v)$, each of which is important for $G$ to be $q$-Ramsey for $H$.
 
First, let $F_1,\ldots,F_q$ be $k$ vertex-disjoint copies of $F-e$. 
For each $i\in [k]$, let $v_i\in V(F_i)$ represent 
the vertex $v\in V(F-e)$ and let $g_1^i,\ldots,g_{d_F(v)}^i\in E(F_i)$
be the edges representing $g_1,\ldots,g_{d_F(v)}$.
Moreover, for every $i\in [k]$, let $F_i'=F_i-v_i$ 
and $W_i:=N_{F_i}(v_i)$.

We fix $G=(V,E)$ to be the vertex-disjoint union of the graphs $F_i'=(V_i',E_i')$, i.e., we set $V=\cup_{i=1}^k V_i'$
and $E=\cup_{i=1}^k E_i'$.
Then we fix a family $\mathscr{G}$ of $q$-color patterns for $G$
such that $g\in\mathscr{G}$
if and only if there exist $i\in [k]$ and $j\in [d_F(v)]$ such that
$g[V_i']\cong f_{1,j}$ 
and such that $g[V_{\ell}']\cong f_2$ for all $\ell \neq i$.

By the definition of the patterns $f_{1,j}$ and $f_2$,
and since the vertex sets $V_i'$ for $i\in [k]$ are pairwise disjoint,
we know that $\mathscr{G}$ is a family of $H$-free $q$-color patterns for $G$. Hence, applying Theorem~\ref{thm:pattern_gadgets_existence}, we can find
a pattern gadget $P=P(H,G,\mathscr{G},q)$
such that $(P,G)$ is $H$-robust. Finally, we obtain $\widetilde{G}$ from $P$ by adding the vertices $v_1,\ldots,v_k$ and by connecting $v_i$ to all vertices in $W_i$ via the edges
$g_1^i,\ldots,g_{d_F(v)}^i$ for all $i\in [k]$.

Analogously to the proof of 
Theorem~\ref{cor:arbitrarily_many_cycles}, 
we now show that $\widetilde{G} \rightarrow_q H$ 
and that each of the edges $g_j^i$, for $i\in [k]$ and $j\in [d_F(v)]$, is important for $\widetilde{G}$ to be Ramsey in the sense that  $\widetilde{G} - g_j^i \not\rightarrow_q H$. 
This then implies the existence of a minimal $q$-Ramsey graph as claimed by the theorem. Indeed, assuming these properties, let $\widetilde{G}' \subseteq \widetilde{G}$ be minimal $q$-Ramsey for $H$. Since $\widetilde{G} - g_j^i \not\rightarrow_q H$,
we can conclude that $g_j^i \in E(\widetilde{G}')$ for every $i \in [k]$ and $j\in [d_F(v)]$. This then implies that 
 $d_{\widetilde{G}'}(v_i)=d_F(v)$.
Hence, $\widetilde{G}'$ is a minimal $q$-Ramsey graph for $H$ with at least $k$ vertices of degree $d_F(v)$.

Let us show first that $\widetilde{G} \rightarrow_{q} H$. 
For a contradiction, suppose we can find
an $H$-free $q$-coloring
$c: E(\widetilde{G})\rightarrow [q]$. For each $i\in[q]$, define $c_i=c^{-1}(i)$ to be the $i$th color class with respect to $c$. By property~\ref{def:patterngadget_somepattern} of the pattern gadget $P$, we know that $g:=\{c_{|G}^{-1}(1), \dots, c_{|G}^{-1}(q)\} \in \mathscr{G}$. Hence, 
by the definition of $\mathscr{G}$, there exist $i \in [k]$ and $j\in [d_F(v)]$ such that $ g[V_i'] \cong f_{1,j}$. 
But then, by the choice of $f_{1,j}$ and the properties of $c_{1,j}$,
we deduce that $c_{|\widetilde{G}[V_i']}$ cannot be extended to an $H$-free $q$-coloring of $\widetilde{G}[V_i'\cup \{v_i\}]$.
This a contradiction, since $c_{|\widetilde{G}[V_i'\cup \{v_i\}]}$
is already such an $H$-free extension by the assumption on $c$.

Next, we show that $\widetilde{G} - g_j^i \not\rightarrow_q H$
for every $i\in [k]$ and $j\in [d_F(v)]$. By symmetry, we may 
only consider the case when $i=j=1$. 
We first partition $G$ in the following way: For every $\ell \in [k]$, we fix a partition $G[V_\ell]= G_{\ell,1} \cup \dots \cup G_{\ell,q}$ such that $\{G_{1,r}\}_{r \leq q} \cong f_{1,1}$ and $\{G_{\ell,r}\}_{r \leq q} \cong f_2$ for $\ell \neq 1$. By the choice of $f_{1,1}$ and $f_2$, we know that the coloring $c: E(G)\rightarrow [q]$ defined by $c(G_{\ell,r})=r$, for every $\ell \in [k]$ and $r \in [q]$, is $H$-free. Moreover, $\{c^{-1}(1), \dots , c^{-1}(q)\} \in \mathscr{G}$ and therefore,
by property~\ref{def:patterngadget_fixedpattern}, we can extend $c$ to an $H$-free $q$-coloring $\varphi_P$ of $P$. 
By the definition of $f_{1,1}$ and the properties of $c_{1,1}$,
we know that the coloring $c|_{G[V_1]}$ can be extended to an $H$-free $q$-coloring $\varphi_1$ of $\widetilde{G}[V_1\cup \{v_1\}]-g_1^1$.
By the definition of $f_{2}$ and the properties of $c_{2}$
we know that, for each $\ell\neq 1$, the coloring $c|_{G[V_\ell]}$ can be extended to an $H$-free $q$-coloring $\varphi_{\ell}$ of 
$\widetilde{G}[V_\ell\cup \{v_\ell\}]$. We now 
put all these colorings together to form
the coloring $\varphi: E(\widetilde{G}-g_1^1)\rightarrow [q]$
given by
$$
\varphi(f) :=
 \begin{cases}
 \varphi_P(f) ~ & \text{ if } f\in E(P) ,\\
 \varphi_\ell(f) ~ & \text{ if } v_\ell\in f
 ~ \text{for some }\ell\in [k] .
 \end{cases}
$$

\medskip

We claim that this coloring is $H$-free.

Assume for a contradiction that there is a monochromatic copy $H'$ of $H$ in the coloring $\varphi$. Then, since $(P,G)$ is $H$-robust,
we know that $H'\subseteq P$ or $H'\subseteq \widetilde{G}[V\cup \{v_\ell\}_{\ell\in [k]}] - g_1^1$. Since the coloring $\varphi_P$ on $P$
is $H$-free, we can assume that $H'\subseteq \widetilde{G}[V\cup \{v_\ell\}_{\ell\in [k]}] - g_1^1$. But then, since $H'$ is connected,
we have $H'\subseteq \widetilde{G}[V_\ell\cup \{v_\ell\}]$ for some $\ell\neq 1$
or $H'\subseteq \widetilde{G}[V_1\cup \{v_1\}] - g_1^1$. In both cases
we know that $H'$ cannot be monochromatic, since
the colorings $\varphi_{1}, \dots, \varphi_k$ are $H$-free. This is a contradiction.
\end{proof}

Finally, we illustrate how to apply Theorem~\ref{thm:arbitrarily_many_3-connected} by deriving Theorem~\ref{cor:arbitrarily_many_cliques} as a consequence of it.

\begin{proof}[Proof of Theorem~\ref{cor:arbitrarily_many_cliques}]
In order to show that $K_t$ is $s_q$-abundant,
it will be enough to prove the existence of a graph $F$
with a vertex $v\in V(F)$ and an edge $e\in E(F)$
satisfying~\ref{goodF:Ramsey}--\ref{goodF:notRamsey2}  with $d_F(v)=s_q(K_t)$.
Implicitly, such a graph is given in
an argument of Fox et al.~\cite{fox2016minimum} 
which was a first step for finding an upper bound on $s_q(K_t)$.
In the following, we will briefly sketch their argument
and then conclude the existence of a graph $F$ as desired.

\medskip

Let $P_q(t-1)$ be the smallest integer $n$ such that 
the following holds:
There exist a graph $G$ on $n$ vertices and a $K_t$-free $q$-color pattern $\set{G_1,\ldots,G_q}$ for $G$ such that,
for every partition $V(G)=\cup_{j\in [q]} V_j$,
there exists a copy $H$ of $K_{t-1}$ and an integer $i\in [q]$ such that $H \subseteq G_i[V_i]$.
Fox et al.\ proved that $s_q(K_t)=P_q(t-1)$ (Theorem~1.5 
in~\cite{fox2016minimum}). For a proof
of the inequality $s_q(K_t) \leq P_q(t-1)$ (Theorem~2.3 
in~\cite{fox2016minimum}), they gave the following construction of a graph $\widetilde{G}$.
\smallskip

Fix a graph $G$ on $P_q(t-1)$ vertices with a $K_t$-free  
$q$-color pattern $\set{G_1,\ldots,G_q}$ as described above.
We take the given graph $G$, an isolated vertex $v$, and a matching $M=\{e_1,\ldots,e_q\}$ that is vertex-disjoint from $G$ and $v$; next, we take a negative signal sender
$S^-:=S^-(K_t,e,f,q,d)$ and a positive signal sender $S^+:=S^+(K_t,e,f,q,d)$ with $d>t$, the existence of which is guaranteed by Theorem~\ref{thm:signal_senders_existence}.
We  then obtain $\widetilde{G}$ as follows:
\begin{enumerate}
\item[(i)] For every distinct $i,j\in [q]$, join $e_i$ and $e_j$ by a copy of $S^-$.
\item[(ii)] For every $i\in [q]$ and every $f\in E(G_i)$, join $e_i$ and $f$ by a copy of $S^+$.
\item[(iii)] Connect $v$ to all vertices in $V(G)$ by an edge.
\end{enumerate}

We will see in the following that $\widetilde{G}\rightarrow_q K_t$, 
$\widetilde{G}-v\not\rightarrow_q K_t$, and 
$\widetilde{G}-M \not\rightarrow_q K_t$. From this, we can then conclude the existence of a graph $F\in \mathcal{M}_q(K_t)$ satisfying the hypothesis of Theorem~\ref{thm:arbitrarily_many_3-connected}.
Indeed, consider any minimal $q$-Ramsey graph $F$ for $K_t$ contained in $\widetilde{G}$.
Since $\widetilde{G}-v \not\rightarrow_q K_t$,
we conclude that $F$ must contain the vertex $v$; moreover, we have $s_q(K_t)\leq d_F(v)\leq d_{\widetilde{G}}(v) = P_q(t-1) = s_q(K_t)$, so $d_F(v) = s_q(K_t)$.
Further, using that 
$\widetilde{G}-M \not\rightarrow_q K_t$,
we also deduce that $\widetilde{G}$ must contain 
at least one edge $e\in M$.
Since $\dist_{F}(v,e)\geq \dist_{\widetilde{G}}(v,e)\geq d>v(K_t)$, $v$ and $e$ cannot share a copy of $K_t$, implying that
property~\ref{goodF:shareH} holds. By the minimality
of $F$, properties~\ref{goodF:Ramsey},~\ref{goodF:notRamsey1},
and~\ref{goodF:notRamsey2} are immediate. We split the remainder of the proof into three claims.

\medskip

\begin{clm}\label{claim:GRamsey}
We have $\widetilde{G}\rightarrow_q K_t$ and
$\widetilde{G}-v\not\rightarrow_q K_t$.
\end{clm}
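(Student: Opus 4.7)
The plan is to handle the two statements separately. For $\widetilde{G}\rightarrow_q K_t$, I would argue by contradiction: suppose $c:E(\widetilde{G})\to[q]$ is a $K_t$-free $q$-coloring. Since $c$ restricts to a $K_t$-free coloring on every signal sender of $\widetilde{G}$, property~\ref{def:signal_senders:signal} applied to the negative signal senders from (i) forces the matching edges $e_1,\ldots,e_q$ to receive pairwise distinct colors; after permuting the color palette, I may assume $c(e_i)=i$ for every $i\in[q]$. Then the positive signal senders from (ii) force $c(f)=i$ for every $f\in E(G_i)$ and every $i\in[q]$. Next, I would use the edges between $v$ and $V(G)$ to partition $V(G)=\bigcup_{j\in[q]}V_j$, where $V_j=\{w\in V(G):c(vw)=j\}$. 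By the defining property of $P_q(t-1)$, there exist $i\in[q]$ and a copy $H$ of $K_{t-1}$ with $H\subseteq G_i[V_i]$. Combining $H$ with $v$ yields a monochromatic $K_t$ in color $i$: all edges of $H$ have color $i$ by the previous step, and every edge from $v$ to $V(H)\subseteq V_i$ has color $i$ by the definition of $V_i$. This contradicts the assumption that $c$ is $K_t$-free.

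For $\widetilde{G}-v\not\rightarrow_q K_t$, I would construct an explicit $K_t$-free $q$-coloring $c$. First set $c(e_i)=i$ for each $i\in[q]$ and $c(f)=i$ for every $f\in E(G_i)$ and every $i\in[q]$. Because each negative signal sender in (i) has its signal edges assigned distinct colors and each positive signal sender in (ii) has its signal edges assigned equal colors, properties~\ref{def:signal_senders:Hfree} and~\ref{def:signal_senders:signal} allow me to extend $c$ to a $K_t$-free coloring on each of these gadgets. To verify that the resulting coloring of $\widetilde{G}-v$ is $K_t$-free, I would note that any copy of $K_t$ that lies entirely inside $G$ is ruled out by the $K_t$-freeness of the pattern $\{G_1,\ldots,G_q\}$, and any copy lying entirely inside a single signal sender is ruled out by the chosen $K_t$-free extension there.

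The main obstacle is the remaining case: a monochromatic copy of $K_t$ that spans several of the structural pieces of $\widetilde{G}-v$ (several signal senders, or a signal sender together with $G$). Here I would use the fact that the distance between the signal edges of every signal sender used in (i) and (ii) is at least $d>t=v(K_t)$, together with the observation that the signal senders share with the rest of $\widetilde{G}-v$ only their signal edges (which lie in $M\cup E(G)$). A routine robustness argument, analogous to the ones carried out in Observations~\ref{obs:gni_subgraph1} and~\ref{obs:gni_subgraph2}, then shows that any copy of $K_t$ using an interior vertex of a signal sender must be fully contained in that signal sender, reducing to a case already handled and completing the proof.
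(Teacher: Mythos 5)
Your proof is correct and follows essentially the same approach as the paper: for $\widetilde{G}\rightarrow_q K_t$ you use the signal senders to pin down the colors of $M$ and the $G_i$ and then invoke the definition of $P_q(t-1)$ on the neighborhood partition induced by $v$, and for $\widetilde{G}-v\not\rightarrow_q K_t$ you build the same explicit coloring and handle cross-piece cliques via the robustness/distance argument from the earlier observations. (As a small aside, for cliques specifically the last step can be done without the distance hypothesis: an interior vertex of a signal sender has all its $\widetilde{G}$-neighbors inside that sender, so a clique containing it lies entirely within the sender.)
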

\textit{Proof.} 
Both statements were already proven in~\cite{fox2016minimum}. We include the argument here for completeness.

We begin by showing that $\widetilde{G} \rightarrow_q K_t$. 
For a contradiction, assume that there exists a $K_t$-free coloring $c:E(\widetilde{G})\rightarrow [q]$. The signal senders in (i) then ensure that the edges of $M$ must receive distinct colors, say without loss of generality that $c(e_i)=i$ for every $i\in [q]$.
The signal senders in (ii) ensure that $c(G_i)=c(e_i)=i$ for every $i\in [q]$. Now, consider the partition $V(G)=\cup_{j\in [q]} V_j$, where, for every $j\in [q]$, we have $w\in V_j$ if and only if $c(vw)=j$. 
Then, by the choice of $G$ and the definition of $P_q(t-1)$,
there exists a graph $H\cong K_{t-1}$
and an integer $i\in [q]$ such that
$H\subseteq G_i[V_i]$.
Hence, the edges in $E(H)\cup \{vw:~ w\in V(H)\}$
all have color $i$ and thus induce a monochromatic copy of $K_t$. This is a contradiction. 

Next, let us show that $\widetilde{G}-v \not\rightarrow_q K_t$. In order to do so, we define a $q$-coloring $c$ of
$\widetilde{G}-v$. We first set
$c(G_i)=c(e_i)=i$ for every $i\in [q]$;
afterwards we extend the coloring $c$ to $\widetilde{G}-v$ in such a way that $c$ is $K_t$-free on each signal sender from (i) and (ii). Note that the latter is possible by property
\ref{def:signal_senders:Hfree} and
\ref{def:signal_senders:signal}.
Analogously to previous proofs, each copy of $K_t$
is fully contained  either in a signal sender or in the graph $G$. Since the coloring restricted to any signal sender is $K_t$-free and since $\set{G_1,\ldots,G_q}$ is a $K_t$-free $q$-color pattern, it follows that $c$ is $K_t$-free.
\hfill \checkmark

\smallskip
The next two claims were not shown in~\cite{fox2016minimum}. 
\begin{clm}\label{claim:Mimportant}
If $s_q(K_t) \leq r_q(K_t)-2$, then $\widetilde{G}-M \not\rightarrow_q K_t$.
\end{clm}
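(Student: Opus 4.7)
The strategy is to exhibit an explicit $K_t$-free $q$-coloring of $\widetilde{G}-M$, leveraging the numerical hypothesis to color the ``core'' subgraph $G^+ := \widetilde{G}[V(G)\cup\{v\}]$ and then extend through the signal senders after their signal edges in $M$ have been removed. The key observation is that once the edges of $M$ are gone, no signal sender forces any color pattern anymore: a positive signal sender from (ii) is reduced to a graph that keeps only one of its signal edges (namely some $f\in E(G_i)$), while a negative signal sender from (i) loses both of its signal edges. Either reduced gadget can be colored $K_t$-freely with essentially arbitrary constraints on the surviving signal edges.

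I would carry out the proof in three steps. First, since $|V(G)\cup\{v\}|=P_q(t-1)+1=s_q(K_t)+1\le r_q(K_t)-1<r_q(K_t)$, the definition of the Ramsey number gives a $K_t$-free $q$-coloring of $K_{s_q(K_t)+1}$; restricting to $G^+$ yields a $K_t$-free $q$-coloring $c$ of the edges of $G$ together with the star at $v$. Second, extend $c$ to the remaining pieces: for each positive signal sender $S^+$ from (ii) joining some $e_i\in M$ to some $f\in E(G_i)$, pick (using a permutation of colors applied to a $K_t$-free coloring of $S^+$, which exists by~\ref{def:signal_senders:Hfree}) a $K_t$-free coloring of $S^+$ in which both signal edges have colour $c(f)$, then delete $e_i$ from its domain; for each negative signal sender $S^-$ from (i), simply take any $K_t$-free coloring of $S^-$ and restrict to $S^-\setminus\{e_i,e_j\}$. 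These local colorings agree on the shared edges by construction, so together they define a $q$-coloring $\widetilde{c}$ of $\widetilde{G}-M$. Finally, argue that $\widetilde{c}$ has no monochromatic $K_t$.

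The last step is the main obstacle and is handled by the standard robustness argument for signal senders. Suppose $K'\cong K_t$ is a monochromatic copy in $(\widetilde{G}-M,\widetilde{c})$. If $K'$ contains an interior vertex of some signal sender $S$, then since $S$ has distance $d>t$ between its signal edges and is otherwise vertex-disjoint from the rest of $\widetilde{G}$, and since $K_t$ has diameter $1$, all $t$ vertices of $K'$ must lie in $V(S)$, forcing $K'\subseteq S$; by construction our coloring restricted to $S$ is $K_t$-free, a contradiction. Otherwise $K'$ uses only vertices in $V(G)\cup\{v\}\cup V(M)$, and since the only edges of $\widetilde{G}-M$ between these ``non-interior'' vertices are either edges of $G^+$ or would have been edges of $M$ (now deleted), $K'$ cannot use any vertex of $V(M)$, so $K'\subseteq G^+$, contradicting the $K_t$-freeness of $c$ on $G^+$. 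Hence $\widetilde{G}-M\not\rightarrow_q K_t$, as claimed.
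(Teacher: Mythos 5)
Your proof is correct and takes essentially the same approach as the paper's: color $\widetilde{G}[V(G)\cup\{v\}]$ $K_t$-freely using the hypothesis $s_q(K_t)+1\le r_q(K_t)-1$, extend through each signal sender (each of which has lost at least one signal edge in $\widetilde{G}-M$, so one can always choose a $K_t$-free coloring whose surviving signal edge matches the color already assigned), and conclude by observing that every copy of $K_t$ lies entirely within a single signal sender or within $\widetilde{G}[V(G)\cup\{v\}]$. You spell out the extension step and the location-of-$K_t$-copies argument in somewhat more detail than the paper, but the underlying argument is identical.
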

\textit{Proof.} 
In order to see this claim, we define a $q$-coloring $c$ of 
$\widetilde{G}- M$ as follows:
We first fix a $K_t$-free $q$-coloring of $\widetilde{G}[N_{\widetilde{G}}(v)\cup \{v\}] = \widetilde{G}[V(G)\cup \{v\}]$,
which is possible since by assumption
we have
$$
|N_{\widetilde{G}}(v)\cup \{v\}|=s_q(K_t)+1 \leq r_q(K_t)-1~ .
$$
Afterwards, we extend the coloring
to every signal sender so that it is $K_t$-free. The latter is possible since every signal sender is missing at least one signal edge in the graph $\widetilde{G}-M$ 
(and hence we can always pretend that the missing signal edge has a color that fits property~\ref{def:signal_senders:signal}).
Now, each copy of $K_t$
is fully contained either in a signal sender or in the graph $G$,
and hence, the resulting coloring of 
$\widetilde{G}- M$ is $K_t$-free. \hfill \checkmark

\begin{clm}\label{claim:Pqbound}
For all $q\geq 2$ and $t\geq 3$, we have $s_q(K_t) \leq r_q(K_t)-2$.
\end{clm}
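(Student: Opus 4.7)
The plan is to establish $r_q(K_t) \geq s_q(K_t) + 2$ via the chain
\[
s_q(K_t) \;=\; P_q(t-1) \;\leq\; (t-1)(r_{q-1}(K_t)-1) \;\leq\; r_q(K_t) - 2,
\]
where the first equality is the characterization of Fox et al.~\cite{fox2016minimum}, and the other two inequalities are established by explicit constructions.

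For the middle inequality $P_q(t-1) \leq (t-1)(r_{q-1}(K_t)-1)$, I would blow up a fixed $K_t$-free $(q-1)$-coloring $\phi$ of $K_{r_{q-1}(K_t)-1}$ by replacing each vertex by a block of $t-1$ new vertices; intra-block edges receive color $q$, and inter-block edges between the blocks indexed by $i$ and $j$ receive color $\phi(ij)$. The resulting $q$-coloring is $K_t$-free by a direct per-color check (intra-block color-$q$ subgraphs are $K_{t-1}$'s, and a monochromatic inter-block $K_t$ in color $c \leq q-1$ would pull back to a monochromatic $K_t$ in $\phi$). For the $P_q(t-1)$-property, a hypothetical failing partition of the blow-up yields a map $f\colon \{1,\dots,r_{q-1}(K_t)-1\} \to [q-1]$, and adjoining a virtual vertex $v^*$ to $K_{r_{q-1}(K_t)-1}$ with $v^* i$ colored $f(i)$ produces a $(q-1)$-coloring of $K_{r_{q-1}(K_t)}$; its monochromatic $K_t$ (which by $K_t$-freeness of $\phi$ must use $v^*$) gives the required monochromatic $K_{t-1}$ in the original blow-up, a contradiction.

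For the last inequality $r_q(K_t) \geq (t-1)(r_{q-1}(K_t)-1) + 2$, I would extend the blow-up construction by a single new vertex $v_0$ and modify a carefully chosen set of inter-block edges so as to avoid monochromatic $K_t$'s in every color. For $q = 2$ this is exactly the explicit construction described in the commented-out passage of the paper: a single edge $v_1v_2$ is re-colored to $q$, and $v_0$'s incident edges are colored according to the ``representatives versus non-representatives'' partition of the vertex set. For general $q \geq 3$, the analogous construction designates a representative $v_i \in V_i$ in each block, modifies a constant number of inter-representative edges, and colors $v_0$'s incident edges so that no $V_i$ is contained entirely in $v_0$'s color-$q$ neighborhood and, in each inter-block color $c \leq q-1$, the representatives assigned to color $c$ by $v_0$'s edges induce no monochromatic $K_{t-1}$ in $\phi$ (after modifications).

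Combining the three bounds yields $s_q(K_t) \leq r_q(K_t) - 2$. The main obstacle is identifying the precise edge modifications in the third step for $q \geq 3$: unlike the single-edge flip sufficing in the $q = 2$ case, the multi-color analog requires simultaneously breaking the $K_{t-1}$-extension condition in each of the $q-1$ inter-block colors while preserving $K_t$-freeness, which involves a careful case analysis of the local structure near the designated representatives. A unified treatment is obtained by checking each color class of the modified coloring separately and noting that the modifications affect only $O(1)$ edges per representative.
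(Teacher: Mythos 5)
Your first two reductions are sound: the blow-up of a $K_t$-free $(q-1)$-coloring $\phi$ of $K_m$ (with $m = r_{q-1}(K_t) - 1$) into blocks of size $t-1$ does give a $K_t$-free $q$-color pattern, and your virtual-vertex argument correctly pulls a failing partition back to a monochromatic $K_t$ in $K_{r_{q-1}(K_t)}$, establishing $P_q(t-1) \leq (t-1)(r_{q-1}(K_t)-1)$. However, the third step, the strengthened product-type bound $r_q(K_t) \geq (t-1)(r_{q-1}(K_t)-1)+2$, has a genuine gap, which you yourself acknowledge. The classical blow-up only gives the bound with ``$+1$,'' and the missing vertex is not a cosmetic issue: for example, joining $v_0$ to all representatives $v_1,\ldots,v_m$ in color $1$ and to all other vertices in color $q$ already fails whenever $\phi$ contains a color-$1$ copy of $K_{t-1}$, which $K_t$-freeness of $\phi$ in no way rules out, since such a copy lifts through the representatives to a color-$1$ $K_t$ through $v_0$; and recoloring inter-representative edges to color $q$ to destroy these copies risks manufacturing color-$q$ copies of $K_t$. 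Your closing sentence about a ``unified treatment'' via a per-color-class check names the goal but not the mechanism: it does not specify which edges to flip, nor why a consistent choice exists simultaneously for every $q\geq 3$ and every admissible base coloring $\phi$.

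The paper proves the claim by a different and cleaner route that sidesteps this obstacle. It works with the explicit tower $N_q = (t-1)^q$ rather than $(t-1)(r_{q-1}(K_t)-1)$ and proves both $P_q(t-1)\leq N_q$ and $N_q+1 < r_q(K_t)$ jointly by induction on $q$. Crucially, the inductive hypothesis already supplies a $K_t$-free $q$-coloring $\psi_q$ of $K_{N_q+1}$, so the ``extra vertex'' in the step from $q$ to $q+1$ comes for free: take a special vertex $v$, split the remaining $N_{q+1}$ vertices into $t-1$ blocks of size $N_q$, color each block together with $v$ (a set of exactly $N_q+1$ vertices) by $\psi_q$, and give all inter-block edges color $q+1$. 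No post-hoc recoloring is needed because the ``$+1$'' vertex is absorbed into the hypothesis from the start. Your route, which passes through $r_{q-1}(K_t)$, cannot imitate this because $K_{r_{q-1}(K_t)}$ admits no $K_t$-free $(q-1)$-coloring by definition. To repair your argument you would need either an independent proof of $r_q(K_t)\geq (t-1)(r_{q-1}(K_t)-1)+2$ for all $q\geq 2$ and $t\geq 3$, or a switch to a concretely controlled sequence such as the paper's $N_q$.
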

\textit{Proof.} Let $t\geq 3$ be fixed. For all $q\geq 2$, define $N_q = (t-1)^q$. To show the claim it suffices to prove that $K_{N_q}$ satisfies the following properties:
\begin{enumerate}
    \item [(i)] There is a $K_t$-free $q$-coloring $\varphi_q$ of $K_{N_q}$ that cannot be extended to a $K_t$-free coloring of $K_{N_q+1}$.
    \item [(ii)] There exists a $K_t$-free coloring $\psi_q$ of $K_{N_q+1}$.
\end{enumerate}
Note that, by an argument similar to that given in Claim~\ref{claim:GRamsey}, property (i) implies that $P_q(t-1) \leq N_q$. Property (ii) implies that $N_q+1 < r_q(K_t)$.  These two inequalities together with the fact that $s_q(K_t) = P_q(t-1)$ imply the claim.
\smallskip

We now proceed by induction on $q$ and show properties (i) and (ii). First consider the case $q=2$. We can use the idea of Burr et al.~\cite{burr1976graphs}. Partition the vertices of the graph $K_{(t-1)^2}$ into $t-1$ equally-sized sets $Q_1,\dots, Q_{t-1}$. Consider the coloring $\varphi_2$ of $K_{(t-1)^2}$ in which the edges lying within a single $Q_i$ are colored red and the edges with endpoints in two different $Q_i$ are colored blue. It is not difficult to check that this coloring is $K_t$-free but there is no way to extend it to $K_{(t-1)^2+1}$ without creating a monochromatic $K_t$, establishing property (i). On the other hand, 
we can define a $K_t$-free 2-coloring $\psi_2$ of $K_{(t-1)^2+1}$ as follows. Let $Q_1,\dots, Q_{t-1}$ be as before; fix an arbitrary vertex $v_i\in Q_i$
for every $i\in [t-1]$. Color all edges of $K_{(t-1)^2}$ as before except for the edge $v_1v_2$, which we now color red. Let $v$ be a new vertex connected to all vertices of $K_{(t-1)^2}$. Color $vv_i$ blue for all $i\in[t-1]$, and color all other edges incident to $v$ red. It is not difficult to check that this coloring is $K_t$-free.

\medskip
Assume that (i) and (ii) hold for some $q\geq 2$.
Consider the graph $K_{N_{q+1}}$. Partition its vertex set into $t-1$ equally-sized sets $Q_1,\dots, Q_{t-1}$. Let $\varphi_{q+1}$ be the coloring in which the edges inside each $Q_i$ are colored according to $\varphi_q$ and the edges between two different $Q_i$ are given color $q+1$. Again, it is easily seen that this coloring is $K_t$-free. Now, let $v$ be a vertex connected to all vertices of $K_{N_{q+1}}$, and consider any coloring of $K_{N_{q+1}+1}$ extending $\varphi_{q+1}$. If all edges from $v$
to some $Q_i$ have colors in $[q]$, then by induction the graph induced by $Q_i\cup\set{v}$ contains a monochromatic copy of $K_t$. So we may assume that, for all $i\in[t-1]$, there is a vertex $v_i\in Q_i$ such that the edge $vv_i$ has color $q+1$. But then the vertices $v_1,\dots, v_{t-1}, v$ induce a monochromatic copy of $K_t$. Hence property (i) is satisfied.
For property (ii), notice that, if $Q_1,\dots, Q_{t-1}$ are as above and $v$ is a new vertex connected to all vertices of $K_{N_{q+1}}$, then coloring the graph induced by $Q_i\cup \set{v}$ according to $\psi_q$ for all $i\in [t-1]$ and giving all edges with endpoints in different $Q_i$ color $q+1$ gives the required $K_t$-free coloring $\psi_{q+1}$ of $K_{N_{q+1}+1}$. \hfill\checkmark
\smallskip

Putting Claims~\ref{claim:GRamsey}--\ref{claim:Pqbound} together, we obtain the theorem.
\end{proof}

\section{Concluding remarks and open problems}\label{sec:conclusion}

In the present paper, we developed a new tool for studying (minimal) Ramsey graphs and showed some applications to questions concerning minimum degrees. In particular, we used pattern gadgets to find examples of graphs $H$ such that a minimal $q$-Ramsey graph for $H$ can contain arbitrarily many vertices of degree $s_q(H)$, that is, $s_q$-abundant graphs. A number of interesting problems remain open.

Questions concerning minimum degrees of minimal Ramsey graphs are particularly interesting for the class of so-called \emph{$q$-Ramsey-simple graphs}. Observe that $s_q(H) \geq q(\delta(H)-1)+1$ for any graph $H$ and integer $q\geq 2$. This was shown by Fox and Lin~\cite{fox2007minimum} for two colors and generalizes easily to any number of colors. Indeed, assume there exists $G\in \mathcal{M}_q(H)$ with a vertex $v \in V(G)$ such that $d_G(v)\leq q(\delta(H)-1)$. Since $G$ is minimal $q$-Ramsey for $H$, we can color the graph $G - v$ with $q$ colors without a monochromatic copy of $H$. Then we can extend this coloring to all of $G$ by coloring at most $\delta(H)-1$ of the edges incident to $v$ in any given color. It is not difficult to check that this is an $H$-free coloring of $G$, a contradiction.
Now, a graph $H$ without isolated vertices is said to be \emph{$q$-Ramsey-simple} if $s_q(H) = q(\delta(H)-1)+1$.  In~\cite{szabo2010minimum}, Szab\'o et al.\ found many classes of $2$-Ramsey-simple bipartite graphs; in particular, all trees were shown to be $2$-Ramsey simple. Later Grinshpun~\cite[Theorems 2.1.2 and 2.1.3]{grinshpun2015some} gave further examples of Ramsey-simple graphs, showing in particular that all 3-connected bipartite graphs are $2$-Ramsey-simple.
Despite this progress, the following question, posed by Szab\'o et al., remains open. 
\begin{question}[\cite{szabo2010minimum}, Problem 2]
Is every bipartite graph with no isolated vertices $2$-Ramsey-simple?
\end{question}
In fact, Grinshpun made the following bolder conjecture.
\begin{conjecture}[\cite{grinshpun2015some}, Conjecture 2.8.2]
Every connected triangle-free graph is $2$-Ramsey-simple.
\end{conjecture}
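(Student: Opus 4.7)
The plan is to apply the pattern-gadget machinery developed in this paper. Fix a connected triangle-free graph $H$, a minimum-degree vertex $v^*\in V(H)$ with $\delta:=\delta(H)$ and neighborhood $\{u_1,\ldots,u_\delta\}$, and set $H^*:=H-v^*$. I would aim to build a graph $\widetilde{G}$ that is $2$-Ramsey for $H$, contains a distinguished vertex $v$ of degree exactly $2\delta-1$, and becomes non-Ramsey after the deletion of $v$. Any minimal $2$-Ramsey subgraph of $\widetilde{G}$ would then contain $v$, and combined with the universal lower bound $s_2(H)\geq 2\delta-1$ recalled in the paragraph preceding the conjecture, this forces $s_2(H)=2\delta-1$, i.e., $H$ is $2$-Ramsey-simple.

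The intended construction mirrors the strategies of Theorem~\ref{cor:arbitrarily_many_cycles} and Theorem~\ref{thm:arbitrarily_many_3-connected}. First, I would take $2\delta-1$ vertex-disjoint rooted copies $H_1,\ldots,H_{2\delta-1}$ of $H^*$, marking in each $H_i$ the vertices $u_1^i,\ldots,u_\delta^i$ corresponding to $u_1,\ldots,u_\delta$. Pick a representative root $w_i\in\{u_1^i,\ldots,u_\delta^i\}$ in each $H_i$ and add a new vertex $v$ joined to $w_1,\ldots,w_{2\delta-1}$. The plan is to use a pattern gadget on the base graph $F=\bigcup_i H_i$ to force every $H$-free $2$-coloring of the gadget to make at least one $H_i$ \emph{rooted-monochromatic}, meaning it contains a monochromatic copy of $H^*$ in which $w_i$ plays the role of a neighbor of $v^*$, while all other $H_j$ are \emph{rooted-bichromatic} (no such rooted monochromatic copy exists). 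Pigeonhole applied to the $2\delta-1$ edges at $v$ would then yield $\delta$ edges of a single color, which together with a rooted-monochromatic $H_i$ of that color produces a monochromatic copy of $H$ at $v$. The $H$-free pattern family $\mathscr{G}$ on $F$ would consist precisely of such colorings; removing $v$ allows one to realize a pattern in $\mathscr{G}$ and extend it to the full pattern gadget by property~\ref{def:patterngadget_fixedpattern} without creating a monochromatic copy of $H$.

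The main obstacle is establishing the existence of signal senders, indicators, and hence pattern gadgets for a \emph{general} connected triangle-free graph $H$. The machinery of Section~\ref{sec:pattern_gadgets_construction} provides these only when $H$ is $3$-connected (Theorems~\ref{thm:signal_senders_existence}--\ref{thm:pattern_gadgets_existence}) or falls into a handful of specialized classes; extending them past the $3$-connected regime is delicate, because when $H$ admits a $1$- or $2$-vertex cut, the standard constructions risk producing spurious monochromatic copies of $H$ that straddle the interfaces between glued gadget pieces, breaking the robustness properties needed for Observations analogous to Observation~\ref{obs:gni_subgraph1}. A natural attack is to decompose $H$ along its blocks and $2$-cuts and to build gadgets inductively, exploiting bipartiteness-based parity arguments peculiar to the triangle-free setting. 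A secondary difficulty is that a monochromatic copy of $H^*$ in $H_i$ does not automatically extend to a monochromatic $H$ through $v$: one must guarantee that $w_i$ is embedded as a \emph{specific} neighbor of $v^*$, which will likely force $\mathscr{G}$ to encode all $\binom{2\delta-1}{\delta}\cdot\delta!$ rooted embeddings simultaneously and may demand a strengthened notion of robustness. I would therefore expect any resolution of the conjecture to go either through a substantial extension of gadget theory to non-$3$-connected triangle-free graphs, or through a structural or probabilistic argument tailored to the triangle-free setting that bypasses gadgets altogether.
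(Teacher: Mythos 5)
This statement is a \emph{conjecture}, not a theorem: the paper cites it from Grinshpun's thesis as an open problem in the concluding remarks section, and it is proved nowhere in the paper. There is thus no ``paper's own proof'' to compare against.

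Your proposal correctly identifies the high-level template one would like to use (pattern gadgets, a base graph of disjoint rooted copies of $H-v^*$, pigeonhole at a low-degree vertex $v$, and property~\ref{def:patterngadget_fixedpattern} to establish $\widetilde G - v \not\rightarrow_2 H$), and you correctly recall the universal lower bound $s_2(H)\ge 2\delta(H)-1$. But it is not a proof, and you candidly flag the reason yourself: Theorems~\ref{thm:signal_senders_existence}, \ref{thm:indicators_existence}, and \ref{thm:pattern_gadgets_existence} provide gadgets only when $H$ is $3$-connected, a cycle, or of the form $K_t\cdot K_2$, and there is no known extension of signal senders or indicators to arbitrary connected triangle-free graphs. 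A triangle-free graph can have $1$- or $2$-vertex cuts, and the robustness arguments in Observations~\ref{obs:gni_subgraph1}--\ref{obs:gni_subgraph2} break down exactly because a copy of $H$ could then straddle glued gadget pieces. This is precisely why Grinshpun's conjecture is open and why the paper states it as such: the known partial results (trees, $3$-connected bipartite graphs, and some regular $3$-connected triangle-free graphs with an extra technical hypothesis) all sidestep or exploit additional connectivity, while the general case remains unresolved. A second genuine gap you yourself note is that forcing a monochromatic copy of $H^*$ in some $H_i$ is not enough; the coloring must place $w_i$ in the role of the \emph{correct} neighbor of $v^*$ in a rooted embedding, and encoding all rooted embeddings into $\mathscr{G}$ is a nontrivial combinatorial demand that the present pattern-gadget framework is not known to support for non-$3$-connected targets. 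In short, the plan is a reasonable research direction, but the two difficulties you raise are the real obstructions, and neither is overcome in the proposal; no proof of this conjecture is known.
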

Some evidence in favor of this conjecture was given in~\cite{grinshpun2017minimum}, where the authors showed that the statement is true for regular 3-connected  triangle-free
graphs satisfying one extra technical condition. It is of course natural to ask the same questions for larger values of $q$. 

In view of the results presented in this paper, it is also interesting to investigate which bipartite (or triangle-free) graphs are $s_q$-abundant. 
This question is particularly interesting for trees. As discussed above, Szab\'o et al.~\cite{szabo2010minimum} showed that, for all trees $T$, we have $s_2(T) = 1$. This result might appear surprising at first, since we might not expect a degree-one vertex to be essential for the Ramsey properties of a graph. Having established that a degree-one vertex can indeed play a significant role in a minimal Ramsey graph for a tree $T$, we might wonder whether we can find many such vertices in a minimal Ramsey graph for $T$. 

It is simple to show that the path $P_4$ with three edges is $s_2$-abundant. Indeed, let $k\geq 3$ be an odd integer and $G$ be the graph obtained from the cycle $C_k$ by adding a distinct pendant edge to each vertex of the cycle. Using the fact that in every 2-coloring of $C_k$ there must be two consecutive edges of the same color, it is not difficult to check that $G$ is a minimal $2$-Ramsey graph for $P_4$. Further, $G$ has $k$ vertices of degree one, establishing the claim.

Thus, we have seen that stars are not $s_2$-abundant but $P_4$ is. For all other trees $T$, the question of whether $T$ is $s_2$-abundant (or, more generally, $s_q$-abundant) remains open. This leads us to propose the following  problem. 

\begin{question}
Let $q\geq 2$ be an integer. Is every tree that is not a star $s_q$-abundant?
\end{question}

As explained above, a positive answer to this question would be rather surprising.
\medskip

More generally, we would like to understand better which graphs $H$ are $s_q$-abundant. We saw in Theorem~\ref{cor:arbitrarily_many_cliques} that we can sometimes show $s_q$-abundance without knowing the precise value of $s_q$. Further, we established a sufficient condition for a given 3-connected graph to be $s_q$-abundant in Theorem~\ref{thm:arbitrarily_many_3-connected}. Given the tools developed in this paper, we believe that all 3-connected graphs should be $s_q$-abundant and propose Conjecture~\ref{conj:3-connected} below.

\begin{conjecture}\label{conj:3-connected}
Every 3-connected graph $H$ is $s_q$-abundant for any integer $q\geq 2$. 
\end{conjecture}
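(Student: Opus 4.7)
The plan is to apply Theorem~\ref{thm:arbitrarily_many_3-connected}. Specifically, for any 3-connected graph $H$ and any $q \geq 2$, we aim to construct a graph $F$ together with a vertex $v \in V(F)$ of degree $s_q(H)$ and an edge $e \in E(F)$ satisfying~\ref{goodF:Ramsey}--\ref{goodF:notRamsey2}. If $F$ is chosen to be a minimal $q$-Ramsey graph for $H$, then~\ref{goodF:Ramsey},~\ref{goodF:notRamsey1}, and~\ref{goodF:notRamsey2} follow at once from minimality, so the only nontrivial requirement is~\ref{goodF:shareH}, namely that $F$ contain some edge $e$ at distance greater than $v(H)$ from $v$.

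Following the blueprint of the proof of Theorem~\ref{cor:arbitrarily_many_cliques}, I would first construct an ambient graph $\widetilde{G}$ satisfying (i) $\widetilde{G} \rightarrow_q H$ and $d_{\widetilde{G}}(v) = s_q(H)$; (ii) $\widetilde{G} - v \not\rightarrow_q H$; and (iii) there exists $e^* \in E(\widetilde{G})$ with $\dist_{\widetilde{G}}(v, e^*) > v(H)$ such that $\widetilde{G} - e^* \not\rightarrow_q H$. Any minimal $q$-Ramsey subgraph $F \subseteq \widetilde{G}$ then automatically contains $v$ by (ii), satisfies $d_F(v) = s_q(H)$ by (i), and contains some edge $e$ at distance greater than $v(H)$ from $v$ by (iii); applying Theorem~\ref{thm:arbitrarily_many_3-connected} to this $F$ yields the conclusion.

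To build $\widetilde{G}$, the first attempt is as follows. Take a minimal $q$-Ramsey graph $F_0$ for $H$ with $v$ of degree $s_q(H)$ and edges $g_1, \ldots, g_{s_q(H)}$ incident to $v$. For each $j$, minimality provides an $H$-free $q$-coloring $c_j$ of $F_0 - g_j$. Setting $G := F_0 - v$ and $\mathscr{G} := \set{c_j|_G : j \in [s_q(H)]}$, we obtain a family of $H$-free $q$-color patterns of $G$ that do not extend to $H$-free $q$-colorings of $F_0$. By Theorem~\ref{thm:pattern_gadgets_existence} there exists an $H$-robust pattern gadget $P = P(H, G, \mathscr{G}, q)$ whose interior lies at distance greater than $v(H)$ from $V(G)$ (ensured by taking the distance parameters in the underlying indicators and signal senders large). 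Reattaching $v$ with its $s_q(H)$ edges to $N_{F_0}(v)$ yields a candidate $\widetilde{G}$. Properties~\ref{def:patterngadget_subgraphG}--\ref{def:patterngadget_fixedpattern} together with the choice of $\mathscr{G}$ readily give (i) and (ii): the former because $F_0 \subseteq \widetilde{G}$, and the latter because every pattern in $\mathscr{G}$ extends to an $H$-free coloring of $P = \widetilde{G} - v$ by~\ref{def:patterngadget_fixedpattern}.

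The hard part is establishing (iii). In the construction above, $F_0$ sits as a subgraph of $\widetilde{G}$, so for any $e^* \in E(\widetilde{G})\setminus E(F_0)$ we have $F_0 \subseteq \widetilde{G} - e^*$ and hence $\widetilde{G} - e^* \rightarrow_q H$; only edges of $F_0$ can be critical, and these may all lie too close to $v$ if $F_0$ has small diameter. To circumvent this, the construction must be refined so that $F_0$ is not embedded as a subgraph while the Ramsey property is preserved. A natural refinement is to introduce a second minimal $q$-Ramsey graph $F_1$ for $H$ with a designated edge $e^* \in E(F_1)$, take $G' := (F_0 - v) \sqcup (F_1 - e^*)$, and build a pattern gadget $P' = P(H, G', \mathscr{G}', q)$ whose family $\mathscr{G}'$ couples the $v$-side and the $e^*$-side so that the Ramsey property of the resulting $\widetilde{G}$ depends jointly on both $v$ and $e^*$. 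Designing $\mathscr{G}'$ so that $\widetilde{G} \rightarrow_q H$ while both $\widetilde{G} - v$ and $\widetilde{G} - e^*$ admit $H$-free $q$-colorings—without either of $F_0$ or $F_1$ ending up as a redundant subgraph—is the main technical obstacle; an alternative route would be to show directly that every 3-connected $H$ admits a minimal $q$-Ramsey graph of arbitrarily large diameter attaining $s_q(H)$, say via signal-sender substitutions that replace individual edges of a small minimal Ramsey graph by longer gadgetized structures. In either case, the flexibility of the pattern gadget framework developed in Section~\ref{sec:pattern_gadgets_construction} should be the right tool, but executing the design precisely is where the bulk of the work lies.
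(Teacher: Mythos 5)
This statement appears in the paper as Conjecture~\ref{conj:3-connected}, explicitly left \emph{open}; there is no proof of it in the paper, so there is nothing to compare against. Your proposal correctly frames the problem but, as you yourself acknowledge, it does not close it, so it is a research plan rather than a proof.

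You are right that Theorem~\ref{thm:arbitrarily_many_3-connected} is the relevant tool, and right that for a minimal $q$-Ramsey graph $F$ with a vertex $v$ of degree $s_q(H)$ the only nonautomatic hypothesis is~\ref{goodF:shareH}. This reduction matches the paper's strategy for Theorem~\ref{cor:arbitrarily_many_cliques}. But producing such an $F$ for a general 3-connected $H$ is exactly where the substance is, and the paper only manages it for cliques by quantitative, clique-specific arguments: the ambient graph $\widetilde{G}$ in that proof is built from $P_q(t-1)$, the separation of $v$ from the matching $M$ comes from choosing signal senders with large distance between their signal edges, and the crucial fact $\widetilde{G}-M\not\rightarrow_q K_t$ (Claim~\ref{claim:Mimportant}) relies on the inequality $s_q(K_t)\leq r_q(K_t)-2$ (Claim~\ref{claim:Pqbound}), which is proved by an explicit inductive coloring of cliques. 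None of $P_q$, the inequality, or the coloring has a known analogue for an arbitrary 3-connected target.

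Your first candidate $\widetilde{G}$ fails for the reason you identify: since $F_0\subseteq\widetilde{G}$, deleting any edge outside $F_0$ leaves a Ramsey graph, so the only potentially critical edges all lie in $F_0$ and may be packed within distance $v(H)$ of $v$ when $F_0$ has small diameter. Your two proposed fixes (coupling two minimal Ramsey graphs via a pattern gadget; inflating $F_0$ by signal-sender substitutions to gain diameter while keeping minimum degree $s_q(H)$) are stated, not executed, and neither comes with a construction of the coupled pattern family $\mathscr{G}'$ or a proof that the inflated graph remains minimal with an untouched small-degree vertex. Those are precisely the missing ideas, and absent them the argument does not establish the existence of an $F$ satisfying~\ref{goodF:Ramsey}--\ref{goodF:notRamsey2} with $d_F(v)=s_q(H)$. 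The conjecture remains open.
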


\bibliographystyle{amsplain}
\bibliography{biblio}

\appendix
\section{Existence of indicators}\label{sec:appendix}

In this appendix, we prove the existence of indicators, as claimed in Theorem~\ref{thm:indicators_existence}. The proof is along the same lines as the proofs given in~\cite{burr1977ramseyminimal} and~\cite{clemens2018minimal}. 
Our arguments differ from those in~\cite{burr1977ramseyminimal} and~\cite{clemens2018minimal} in three ways. First, we extend the constructions to cover cases~\ref{thm:indicators_existence:cycles} and~\ref{thm:indicators_existence:pendant}; second, we discuss robustness properties; 
third, we strengthen one of the properties shown in in~\cite{burr1977ramseyminimal} and~\cite{clemens2018minimal} (the one corresponding to property~\ref{def:indicator:non-constant}), which is needed for our application.

\begin{proof}[{Proof of Theorem~\ref{thm:indicators_existence}}]
Without loss of generality, we assume that $d > v(H)$.

We proceed by induction on the number of edges in $F$. The basic construction is the same in all three cases
\ref{thm:indicators_existence:3connected}--\ref{thm:indicators_existence:pendant}; we will see that the special properties we require in the latter two cases follow almost immediately from the properties of the respective signal senders given in Theorem~\ref{thm:signal_senders_existence}.

We will in fact show something stronger: Our indicators will satisfy an additional property, which, following Clemens et al.~\cite{clemens2018minimal}, we call {property $\mathcal{T}$}.
We say that an indicator $I = I^+(H,F,e,d,q)$ satisfies {\em property $\mathcal{T}$} if there is a collection of subgraphs $\set{T_f\subseteq I : f\in E(F)}$ satisfying the following properties:
\begin{enumerate}[label=\itmarab{T}]
    \item \label{def:property_T:intersections} $V(T_f)\cap V(F) = f$ for all $f\in E(F)$.
    \item \label{def:property_T:unions} $V(I) = \bigcup_{f\in E(F)} V(T_f)$ and $E(I) = \bigcup_{f\in E(F)} E(T_f)$.
    \item \label{def:property_T:distances} for all distinct $f_1,f_2\in E(F)$ and all $v\in V(T_{f_1})\cap V(T_{f_2})$, we have either $v\in V(F)$ or $\dist_I(v, F)\geq d$.
\end{enumerate}
Property $\mathcal{T}$ will be useful for showing the required robustness properties.

\smallskip
We begin with the base case $e(F)=2$.
In this case, we will show that our indicators possess one additional property, as given below.
\begin{align}\setlength{\mathindent}{0pt}\label{eq:propertystar}
    \text{If }F = \set{f_1,f_2}\text{ is a matching, then }\dist_I(f_1, f_2) \geq d.\tag{*}
\end{align}

We have two different constructions, one for $q=2$ and a different one for $q>2$. 
We start with the former, which is a slightly modified version of the construction given in~\cite{burr1977ramseyminimal}.

\medskip
For $q=2$, begin with a copy $H_0$ of $H$ and let $e_1, e_2 \in E(H_0)$ be arbitrary except when $H \cong  K_t\cdot K_2$, in which case $e_1$ should not be the pendant edge. Let $e$ be an edge disjoint from $H_0$ and $F$. Say $E(F) = \set{f_1,f_2}$. Let $S^-$ and $S^+$ be a negative and a positive signal sender for $H$ in which the distance between each pair of signal edges is at least $d$ and which satisfy the properties guaranteed by Theorem~\ref{thm:signal_senders_existence}.
Let $I$ be the graph constructed in the following way:
\begin{enumerate}
    \item [(i)] Connect $f_1$ to every edge in $E(H)\setminus \set{e_1,e_2}$ by a copy of $S^-$.
    \item [(ii)] Join $f_2$ and $e_2$ by a copy of $S^-$.
    \item [(iii)] Join $e_1$ and $e$ by a copy of $S^+$.
\end{enumerate}
We claim that the graph $I$ constructed in this way is a positive indicator with indicator edge $e$ that also satisfies the required additional properties in each of the cases~\ref{thm:indicators_existence:3connected}--\ref{thm:indicators_existence:pendant}.

We first discuss where copies of $H$ in the graph $I$ can be located. Note that  Observations~\ref{obs:indicator_subgraph1} and~\ref{obs:indicator_subgraph2} immediately imply the claimed robustness properties.

\begin{obs}\label{obs:indicator_subgraph1}
Let $H$ be $3$-connected  or a cycle.
Let $I'$ be a graph obtained from $I$ by adding a new vertex set $S$ and any collection of edges within $S\cup V(F)$.
Then every copy of $H$ in $I'$ either lies entirely within one of the signal senders from (ii) or (iii), or is fully contained in $S\cup V(F)$, or is the starting copy $H_0$.
\end{obs}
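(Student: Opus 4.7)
The plan is a case analysis on the position of a given copy $H'$ of $H$ in $I'$ relative to the signal senders used to build $I$. The two key inputs from the construction are that every signal sender $S^*$ used has its two signal edges at distance at least $d > v(H)$ in $S^*$, and that in the cycle case Theorem~\ref{thm:signal_senders_existence}\ref{thm:signal_senders_existence:cycles} lets us require $\text{girth}(S^*) = t = v(H)$. A secondary observation is that $H_0$, $F$, and $e$ are pairwise vertex-disjoint in $I$ by construction, and that the added edges of $I'$ live only inside $S \cup V(F)$.

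First, I would dispose of the case that $H'$ meets some interior vertex $v$ of a signal sender $S^*$ (from any of (i), (ii), (iii)). Because the two signal edges of $S^*$ are separated in $S^*$ by distance exceeding $v(H)$, no copy of $H$ can simultaneously use such an interior vertex and a vertex lying outside $S^*$: when $H$ is $3$-connected or a triangle, this is a direct consequence of the $H$-robustness of $S^*$ guaranteed by Theorem~\ref{thm:signal_senders_existence}; when $H \cong C_t$ with $t \geq 4$, it follows because $H'$ is a $t$-cycle and $S^*$ has girth exactly $t$, so $H'$ must close up entirely inside $S^*$. Hence $H' \subseteq S^*$, which matches the first alternative of the statement.

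Next, I would treat the complementary case in which $H'$ avoids every signal sender interior. Then $V(H') \subseteq V(H_0) \cup V(F) \cup V(e) \cup S$, and each edge of $H'$ is drawn from $E(H_0) \cup E(F) \cup \{e\}$ together with the newly added edges within $S \cup V(F)$. The structural point to exploit is that, once the interiors of all signal senders are removed from $I'$, the resulting subgraph decomposes into exactly three connected components: the copy $H_0$, the subgraph induced on $S \cup V(F)$, and the isolated edge $e$. Since $H$ is connected and has at least three vertices, $H'$ must lie entirely within one of these components. If $H' \subseteq V(H_0)$, then since $H'$ is a copy of $H$ on $v(H)$ vertices inside the $v(H)$-vertex graph $H_0$, it must exhaust all vertices and edges of $H_0$, forcing $H' = H_0$; if $H' \subseteq S \cup V(F)$, we are in the second alternative; the isolated edge $e$ cannot contain $H$.

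The delicate point I expect to encounter is in the first case, namely treating the $3$-connected regime and the cycle regime on equal footing. Theorem~\ref{thm:signal_senders_existence} is tailored for this: it yields $H$-robust senders in the former regime and senders of girth exactly $t$ in the latter, and invoking the appropriate property in each regime gives the conclusion uniformly. The second case is then a straightforward connectivity and vertex-count argument, and together the two cases complete the proof.
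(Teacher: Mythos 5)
Your overall structure --- first handling the case where $H'$ meets the interior of some signal sender, then observing that once all such interiors are stripped from $I'$ the three pieces $V(H_0)$, $S\cup V(F)$, and $V(e)$ are pairwise disconnected --- follows the paper's argument, and your girth reasoning in the cycle case is the intended one. However, there is a real gap in how you dispose of the signal-sender case when $H$ is $3$-connected: you appeal to ``the $H$-robustness of $S^*$ guaranteed by Theorem~\ref{thm:signal_senders_existence},'' but that theorem does not assert that signal senders are $H$-robust. Robustness in the sense of Definition~\ref{def:robustness} is stated in this paper only for indicators (Theorem~\ref{thm:indicators_existence}), generalized negative indicators (Lemma~\ref{lem:gen_negative_indicators_existence}), and pattern gadgets (Theorem~\ref{thm:pattern_gadgets_existence}); a signal sender carries no designated subgraph $G_0$ to which the definition would even apply, and no robustness claim about signal senders appears anywhere. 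What you need instead is the direct separation argument the paper gives: since the two signal edges of $S^*$ are at distance greater than $v(H)$ inside $S^*$, the copy $H'$ can contain vertices of at most one signal edge $f$; the only vertices through which $H'$ can leave $S^*$ are vertices of signal edges, so if $H'$ also contained a vertex outside $S^*$, the at most two vertices of $f$ would form a cutset of $H'$, contradicting $3$-connectivity (for $H\cong K_3$ the claim is immediate). Replacing the robustness citation with this explicit cut argument closes the gap.

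Two secondary remarks. The graph obtained after deleting all signal-sender interiors need not have \emph{exactly} three connected components (for instance $S\cup V(F)$ can itself be disconnected when $F$ is a matching and $S=\varnothing$); the property you actually need, and the one the paper states, is just that $V(H_0)$, $S\cup V(F)$, and $V(e)$ are pairwise disconnected, so that the connected graph $H'$ lies in one of them --- your argument uses only this. Also, you are right to allow $S^*$ to be a signal sender from step (i) as well as (ii) or (iii): the observation as printed lists only (ii) and (iii), but step (i) also attaches signal senders, and both the paper's own proof (``disjoint from the interior of \emph{any} of the signal senders'') and its later use of this observation (``lies entirely within \emph{some} signal sender'') make clear that (i) is meant to be included.
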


\textit{Proof.} It is not difficult to see that the claim holds when $H \cong  K_3$, so assume now that $v(H)>3$.
For a contradiction, suppose there is a copy $H'$ of $H$ in $I'$ forming a counterexample. Assume first that $H'$ contains an interior vertex $v$ of one of the signal senders from (ii) or (iii); call this signal sender $S'$. Since the distance between the signal edges of $S'$ is at least $d>v(H')$, we know that $H'$ can only contain vertices from one of the signal edges; call this signal edge $f$. Now, $H'$ is a counterexample, so it needs to contain a vertex $w$ not belonging to $S'$. If $H$ is 3-connected, this is not possible, since removing the edge $f$ disconnects the graph $H'$ (any path from $v$ to $w$ in $I'$ must contain a vertex of one of the signal edges of $S'$). If $H$ is a cycle, then $H'$ needs to contain both vertices of $f$, for otherwise we can disconnect $H'$ by removing a vertex of $f$, contradicting the fact that $H'$ is $2$-connected. But then the vertices in $V(f)\cup \set{v}$ participate in a cycle of length strictly smaller than $v(H)$ in $S'$, contradicting our assumption on the girth of the signal senders.

Hence, we may assume that $H'$ is disjoint from the interior of any of the signal senders. So $H'$ is a subgraph of the graph induced by $S\cup V(F)\cup V(H_0)\cup V(e)$, in which the sets $S\cup V(F)$, $V(H_0)$, and $V(e)$ are all disconnected from each other. Hence no copy of $H$ can use vertices from more than one of these sets, implying the claim. \hfill { $\checkmark$}

\smallskip
The proof of the girth property required in part~\ref{thm:indicators_existence:cycles} is very similar to the proof of Observation~\ref{obs:indicator_subgraph1}.

\smallskip
Using a similar argument, we can show the analogous statement for $H\cong K_t\cdot K_2$, given in the observation below. 
\begin{obs}\label{obs:indicator_subgraph2}
Let $H \cong  K_t\cdot K_2$.
Let $I'$ be a graph obtained from $I$ by adding a new vertex set $S$ and any collection of edges within $S\cup V(F)$.
Then every copy of $K_t$ in $I'$ either lies entirely within one of the signal senders from (ii) or (iii), or is fully contained in $S\cup V(F)$, or is in the starting copy $H_0$.
\end{obs}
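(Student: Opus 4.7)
The plan is to follow the proof of Observation~\ref{obs:indicator_subgraph1} with minor modifications suited to copies of $K_t$; the completeness of $K_t$ actually makes one of the two cases almost trivial. Let $K'$ be a copy of $K_t$ in $I'$, and split according to whether $K'$ meets the interior of some signal sender $S'$ used in the construction of $I$.

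If $K'$ contains an interior vertex $v$ of some signal sender $S'$, I claim $V(K') \subseteq V(S')$. The key point is that every $I'$-neighbour of $v$ already lies in $V(S')$: in $I$ this holds because the signal sender is attached to the rest of the graph only through its signal edges, so interior vertices have all of their incident edges inside $S'$; and passing from $I$ to $I'$ we only added edges inside $S \cup V(F)$, which contains no interior vertex of any signal sender. Since $K'$ is complete, every vertex of $V(K')\setminus\{v\}$ is a neighbour of $v$ in $I'$, hence lies in $V(S')$. (Strictly this argument also allows $S'$ to be a signal sender of type (i); as in Observation~\ref{obs:indicator_subgraph1}, this situation is treated in exactly the same way.)

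If $K'$ meets no signal sender interior, then $V(K') \subseteq S \cup V(F) \cup V(H_0) \cup V(e)$. I would next verify that, after the interior vertices of every signal sender are discarded, the three pieces $S \cup V(F)$, $V(H_0)$, and $V(e)$ are pairwise vertex-disjoint and have no edges between them in $I'$. Edges of $I$ crossing between pieces can only live inside some signal sender; but the distance guarantee between signal edges of any $S'$ is at least $d > v(H) > t$, so in particular no single edge of $S'$ joins its two signal edges. Hence, deleting the interior of $S'$ splits it cleanly into its two signal-edge components, each of which sits inside exactly one piece. The new edges of $I'$ are added only inside $S \cup V(F)$ and create no further crossings. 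Since $K'$ is connected, it is confined to a single piece; the piece $V(e)$ is ruled out because $t \geq 3 > |V(e)|$. If $K' \subseteq V(H_0)$, then, because the pendant vertex of $H_0 \cong K_t \cdot K_2$ has degree~$1$ and $t \geq 3$, the only copy of $K_t$ in $H_0$ is its clique part, so $K'$ lies inside the starting copy $H_0$.

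The main obstacle, such as it is, is the bookkeeping around signal edges, which sit simultaneously in a signal sender and in one of $V(F)$, $V(H_0)$, or $V(e)$. The distance lower bound $\dist_{S'}(e_{S'},f_{S'}) \geq d > t$ between the signal edges of every signal sender $S'$ is exactly what ensures that removing signal sender interiors genuinely disconnects the three pieces, and so nothing beyond this routine verification is needed.
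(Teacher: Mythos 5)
Your proof is correct and follows the same two-case decomposition the paper has in mind (the paper only says ``using a similar argument'' and points back to Observation~\ref{obs:indicator_subgraph1}). The one place where you genuinely streamline is Case~1: instead of invoking $3$-connectivity or girth to force $K'$ into the signal sender $S'$, you use the completeness of $K_t$ directly --- since every other vertex of $K'$ is adjacent to the interior vertex $v$, and all $I'$-neighbours of $v$ lie in $V(S')$ (no new edges touch interior vertices), the whole clique is in $S'$. This is shorter and avoids any appeal to the distance lower bound in Case~1, though the distance bound is still needed in Case~2 to split the signal senders cleanly into their two signal-edge components, exactly as you say. Your parenthetical flag that $S'$ could also be a type-(i) sender is well taken: the statements of both Observations~\ref{obs:indicator_subgraph1} and~\ref{obs:indicator_subgraph2} should read ``(i), (ii), or (iii)''; a copy of $K_t$ living inside a type-(i) sender is handled identically, and in the downstream application (checking $H$-freeness of the defined coloring) all signal senders, including those from (i), are colored $H$-free, so nothing is lost.
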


\medskip

We now turn our attention to property \eqref{eq:propertystar} and property $\mathcal{T}$. If $F$ is a matching, by the choice of the signal senders used in the construction, we indeed have $\dist_I(f_1,f_2) \geq d$. To verify the latter property, notice that the subgraphs $T_{f_2}$, consisting of the signal sender connecting $f_2$ and $e_2$, and $T_{f_1}$, induced by all remaining vertices together with the vertices of $e_2$ and the vertices in $f_1\cap f_2$, satisfy~\ref{def:property_T:intersections}--\ref{def:property_T:distances}. 

\medskip
It remains to show that $I$ satisfies properties~\ref{def:indicator:subgraph}--\ref{def:indicator:non-constant} as well as the additional properties required in part~\ref{thm:indicators_existence:pendant}. 

\ref{def:indicator:subgraph} The first part is clear, since in the construction we do not add any further edges between the vertices of $F$. In each case, the second part of the property follows easily from the fact that $\dist_I(F,e)$ must be at least the distance between the signal edges in the signal senders we attach to $f_1$, $f_2$, and $e$.

\ref{def:indicator:Hfree} For this, consider the following coloring:
\begin{itemize}
    \item Give color 1 to the edges of $F$.
    \item Give color 1 to $e_1$ and color 2 to all other edges of $H_0$.
    \item Give color 1 to $e$.
    \item Extend this coloring to each of the signal senders so that no signal sender contains a monochromatic copy of $H$. In case~\ref{thm:indicators_existence:pendant}
    choose these colorings to be $K_t\cdot K_2$-special.
\end{itemize}
Note that the extension in the last step of the coloring is possible
since the colors for the signal edges are chosen so that
they fit property~\ref{def:signal_senders:signal}. Observe also that $F$ is monochromatic.

We claim that this coloring is $H$-free. Indeed, for parts~\ref{thm:indicators_existence:3connected} and~\ref{thm:indicators_existence:cycles}, Observation~\ref{obs:indicator_subgraph1} implies that every copy of $H$ in $I$ either lies entirely within some signal sender or is the starting copy $H_0$; by our choice of the coloring, none of these copies of $H$ are monochromatic. For~\ref{thm:indicators_existence:pendant}, again
neither the starting copy $H_0$ nor any copy of $H$ that is fully contained within a single signal sender is monochromatic.
Any other copy of $H$
must contain a copy of $K_t$ that touches a signal edge and hence cannot be monochromatic by the choice of the $K_t\cdot K_2$-special $2$-coloring from Theorem~\ref{thm:signal_senders_existence}. 

Further, we use the $K_t\cdot K_2$-special $2$-coloring from Theorem~\ref{thm:signal_senders_existence} to color each of the signal senders, so this coloring of $I$ is also a $K_t\cdot K_2$-special $2$-coloring.

\ref{def:indicator:same} If $F$ is monochromatic in, say, color 1, then by property~\ref{def:signal_senders:signal} of the signal senders, in any $H$-free coloring, all edges in $E(H_0)\setminus\set{e_1}$ have color 2 and $e_1$ and $e$ have the same color. For the coloring to be $H$-free, the edge $e_1$, and hence also $e$, must have color 1.

\ref{def:indicator:non-constant} To justify this property, 
 consider the following coloring:
\begin{itemize}
    \item Give color $\varphi_F(f_i)$ to $f_i$ for both $i\in [2]$.
    \item Give color $k$ to $e_1$, color $\varphi_F(f_1)$ to $e_2$, and
    color $\varphi_F(f_2)$ to all other edges of $H_0$.
    \item Give color $k$ to $e$.
    \item Extend this coloring to each of the signal senders so that no signal sender contains a monochromatic copy of $H$. In case~\ref{thm:indicators_existence:pendant},
    choose these colorings to be $K_t\cdot K_2$-special.
\end{itemize}
Again, the extension in the last step of the coloring is possible
since the colors for the signal edges are chosen so that
they fit property~\ref{def:signal_senders:signal}.
The argument needed to check that this coloring is $H$-free is similar to the one used to verify property~\ref{def:indicator:Hfree} above. Also, it is not hard to see that this also gives a $K_t\cdot K_2$-special $2$-coloring in case~\ref{thm:indicators_existence:pendant}.

\smallskip
We now present the construction for $q>2$ and $e(F)=2$, given in \cite{clemens2018minimal}. Say $E(F) = \set{f_1,f_2}$. Let $\set{e_1,\dots,e_{q-1}}$ be a matching, disjoint from $F$. Let $H_1,\dots, H_{q-1}$ be copies of $H$ that are disjoint from $F$ and  $e_1,\dots, e_{q-1}$ and that all intersect in precisely one fixed edge, which we call $e$. Let $S^+$  and $S^-$ be a positive and a negative signal sender for $H$ respectively in which the distance between the signal edges is at least $d$
and which satisfy the additional properties guaranteed by Theorem~\ref{thm:signal_senders_existence}.
Let $I$ be the graph constructed in the following way:

\begin{enumerate}
    \item [(i)] Connect $f_1$ and $e_i$ by a copy of $S^-$ for all $i\in [q-2]$.
    \item [(ii)] Connect $f_2$ and $e_{q-1}$ by a copy of $S^-$.
    \item [(iii)] Join each pair $e_i, e_j$ for $1\leq i<j\leq q-1$ by a copy of $S^-$.
    \item [(iv)] For all $i\in [q-1]$, connect $e_i$ to all edges of $H_i$ except for $e$ by a copy of $S^+$.
\end{enumerate}

The verification that this indeed gives an indicator is similar to the one in the case $q=2$. As before, every copy of $H$ in parts~\ref{thm:indicators_existence:3connected} and \ref{thm:indicators_existence:cycles} either is one of the starting copies $H_1,\dots, H_{q-1}$ or is fully contained within a single signal sender (similarly  for $K_t$ in part~\ref{thm:indicators_existence:pendant}). 
Notice also that the robustness property, the girth property required in~\ref{thm:indicators_existence:cycles}, and properties $\mathcal{T}$ and~\eqref{eq:propertystar} are shown in a similar way as in the case $q=2$.

\smallskip
Finally, we check properties~\ref{def:indicator:subgraph}--\ref{def:indicator:non-constant}.
Property~\ref{def:indicator:subgraph} is straightforward.

\ref{def:indicator:Hfree} To see this property, color $f_1$, $f_2$, and $e$ with color 1 and, for all $i\in [q-1]$, give $e_i$ and all edges in $E(H_i)\setminus\set{e}$ color $i+1$. Then extend this coloring to all of the signal senders so that each signal sender is colored without a monochromatic copy of $H$, which is possible since the colors chosen above fit property~\ref{def:signal_senders:signal}. By the same argument as in the case $q=2$, there in no monochromatic $H$ anywhere in the graph.

\ref{def:indicator:same} For this, suppose $f_1$ and $f_2$ have the same color, say color 1. Then, in any $H$-free coloring, each of the colors in $[q]\setminus \{1\}$ must be used on the matching $e_1, \dots, e_{q-1}$ exactly once because of the signal senders in (i)--(iii), and each $H_i-e$ needs to be monochromatic in the color of $e_i$ because of the signal senders in (iv). Thus, to avoid a monochromatic copy of $H$, the color of $e$ must be 1, i.e., the same as the color of $F$. 

\ref{def:indicator:non-constant} Suppose that $f_1$ and $f_2$ are colored differently, say using colors 1 and 2 respectively, and we are given any color $k$. If $k\neq 1,2$, we can color $e_1,\dots, e_{q-2}$ with colors $2, \dots, k-1,k+1,\dots, q$ respectively and $e_{q-1}$ with color 1; if $k=2$, we can color $e_1,\dots, e_{q-2}$ with colors $3, \dots, q$ and $e_{q-1}$ with color 1; finally, if $k=1$, we color $e_1,\dots, e_{q-2}$ with colors $2, \dots, q-1$ and $e_{q-1}$ with color $q$.
In each case, color $k$ is available for $e$ and we can still extend the coloring to all the signal senders without creating a monochromatic copy of $H$.

\medskip
We now proceed with the induction step. Suppose there exist indicators as required in the statement of the theorem that also satisfy properties $\mathcal{T}$ and \eqref{eq:propertystar} when $e(F) \leq \ell$ for some $\ell\geq 2$. Assume $e(F) = \ell+1$. Let $f$ be any edge of $F$ and $F' = F - f$; further, let $e'$ and $e$ be two edges that are disjoint from $F$ and from each other. By the induction hypothesis, there exists a positive $(H, F', e', q, d)$-indicator $I'$ satisfying all of the required properties. 
There also exists a positive $(H, \set{e',f}, e, q, d)$-indicator $I''$ in which the distance between $e'$ and $f$ is at least $d$. Now, let $I$ be the graph obtained by joining $F'$ and $e'$ by $I'$ and $\set{f,e'}$ and $e$ by $I''$. We claim that $I$ is a positive indicator satisfying all required properties. 

\medskip
First, as before, we discuss where copies of $H$ can be located. For this, consider a graph obtained from $I$ by adding a new set of vertices $S$ and any edges within $S\cup V(F)$. We claim that in parts~\ref{thm:indicators_existence:3connected} and~\ref{thm:indicators_existence:cycles} every copy of $H$ is contained entirely within  $I'$, $I''$, or the graph induced by $S\cup V(F)$, and that in part~\ref{thm:indicators_existence:pendant} every copy of $K_t$ is contained entirely within $I'$, $I''$, or the graph induced by $S\cup V(F)$. Again, this immediately implies the claimed robustness properties. 

Assume first that $H$ is either 3-connected or isomorphic to a cycle.
Let $H'$ be a copy of $H$ in the new graph. Suppose that $H'$ contains an interior vertex of $I''$. By our assumption on the distance between $f$ and $e'$ in $I''$, the graph $H'$ can only contain vertices from one of these two edges. Again, by the condition that $H$ is 3-connected or isomorphic to $K_3$ in~\ref{thm:indicators_existence:3connected} and by our assumption on the girth of each indicator and of $F$ in~\ref{thm:indicators_existence:cycles}, we conclude that  $H'\subseteq I''$. 

Suppose next that $H'$ contains no such vertices but contains an interior vertex $v$ of $I'$. If $H'$ contains no vertices of $F$, then $H'$ cannot contain any vertices from the set $S$ either, and thus $H'$ is fully contained in $I'$. Hence, we may assume that $H'$ contains a vertex of $F$.
Let $\set{T_{f'}: f'\in E(F')}$ be a collection of subgraphs of $I'$ witnessing that $I'$ has property $\mathcal{T}$. Then $v$ is contained in some $T_{g}$ for $g\in E(F')$; further, this $g$ is unique, since otherwise $\dist_{I}(F, v) \geq d > v(H)$ by~\ref{def:property_T:distances}, leading to a contradiction. 

Since $V(T_g) \cap V(F) = g$, the only possible copy of $K_3$ containing $v$ and a vertex of $F$ consists of $v$ and the endpoints of $g$ and is thus fully contained in $I'$. If $(S\cup V(F))\cap V(H') \subseteq V(g)$, then $H'$ is fully contained in $I'$. So $H'$ contains a vertex from $(S\cup V(F))\setminus V(g)$. If $H$ is 3-connected, this is not possible, since removing the vertices of $g$ disconnects $H'$. Finally, suppose $H$ is a cycle. In this case, by the fact that $H'$ is $2$-connected, both vertices of $g$ must be contained in $H'$, but this means that $v$ and the vertices of $g$ are part of a cycle of length strictly smaller than $v(H')$, contradicting our assumption about the girth of $I'$. 

The argument for part~\ref{thm:indicators_existence:pendant} is analogous to that for part~\ref{thm:indicators_existence:3connected}. Also, a similar argument shows that the girth condition required in part~\ref{thm:indicators_existence:cycles} holds.

Now, to verify property $\mathcal{T}$, consider a collection $\set{T_{f'} : f'\in E(F')}$ of subgraphs of $I'$ given by property $\mathcal{T}$. Adding the graph $I''$, we obtain a collection of subgraphs of $I$ satisfying~\ref{def:property_T:intersections}--\ref{def:property_T:distances}.

Finally, we check the indicator properties.
Property~\ref{def:indicator:subgraph} is clear, since we do not add any new edges within $F$ and 
$\dist_I(F,e) \geq \dist_{I''}(\set{f, e'}, e)\geq d$. 

\ref{def:indicator:Hfree}
The coloring $c$ given by $c(F) = c(e') = c(e) = 1$ can be extended to $I'$ and $I''$ so that neither contains a monochromatic copy of $H$. 
Furthermore, in parts~\ref{thm:indicators_existence:3connected} and~\ref{thm:indicators_existence:cycles}, every copy of $H$ lies entirely within one of $I'$ and $I''$, so there is no monochromatic copy of $H$ in $I$. For part~\ref{thm:indicators_existence:pendant}, the same follows from the fact that every copy of $K_t$ is contained within $I'$ or $I''$ and we can choose the extensions to $I'$ and $I''$ in such a way that no vertex of $F, e',$ or $e$ is in a monochromatic copy of $K_t$. The latter is possible as
we can find $K_t\cdot K_2$-special $2$-colorings for $I'$ and $I''$ by induction. Also, using these $K_t\cdot K_2$-special $2$-colorings we obtain a coloring of the whole graph $I$ such that all edges incident to $e$ have a different color from $e$ and no vertex of $F$ or $e$ is part of a monochromatic $K_t$, as required. Thus, we conclude that~\ref{def:indicator:Hfree} holds and we immediately get the existence of a $K_t\cdot K_2$-special $2$-coloring as required in part~\ref{thm:indicators_existence:pendant}. 

\ref{def:indicator:same} For this, note that, if $F$ is monochromatic in an $H$-free coloring, then $I'$ forces $e'$ to have the same color as $F$, and $I''$ in turn makes it necessary for $e$ to also have the same color as $F$. 

\ref{def:indicator:non-constant} For the last property,
let $\varphi_F$ be any non-constant coloring of the edges of $F$ and $k\in [q]$. Let $f', f''\in E(F)$ be two edges that have distinct colors, say $\varphi_F(f') = i$ and $\varphi_F(f'') = j$ for some distinct $i,j\in[q]$. First assume that $f' = f$. In this case whether $F-f$ is monochromatic or not, there exists an extension of $\varphi_F$ to an $H$-free coloring of $I'$ such that the color of $e'$ is $j$, which in turn means that there is an extension of this coloring also to 
an $H$-free coloring of $I''$ such that the color of $e$ is $k$.
Otherwise, if $f' \in F-f$, then $F-f$ is not monochromatic and there is an extension of $\varphi_F$ to an $H$-free coloring of $I'$ in which $e'$ has color $k$. Then, whether $\set{f,e'}$ is monochromatic or not, there exists an extension of $\varphi_F$ also to an $H$-free coloring of $I''$ such that $e$ has color $k$. Again, it is not difficult to check that this is an $H$-free coloring of $I$ that in case~\ref{thm:signal_senders_existence:pendant} can be made $K_t\cdot K_2$-special.
\end{proof}

\end{document}